\makeatletter \@addtoreset{equation}{section} \makeatother
\theoremstyle{plain}
\theoremstyle{plain}
\newtheorem{maintheorem}{Theorem}
\newtheorem{theorem}{Theorem }[section]
\newtheorem{proposition}[theorem]{Proposition}
\newtheorem{lemma}[theorem]{Lemma}
\newtheorem{corollary}[theorem]{Corollary}
\theoremstyle{definition} \theoremstyle{remark}
\newtheorem{remark}[theorem]{Remark}
\newtheorem{definition}[theorem]{Definition}
\DeclareMathAlphabet{\mathpzc}{OT1}{pzc}{m}{it}
\newcommand{\field}[1]{\mathbb{#1}}
\newcommand{\RR}{\field{R}}
\newcommand{\ZZ}{\field{Z}}
\newcommand{\NN}{\field{N}}
\newcommand{\loc}{\textnormal{loc}}
\numberwithin{equation}{section}
\begin{document}
\large

\title[Strange attractors and wandering domains near a homoclinic cycle to a bifocus]{Strange attractors and wandering domains \\ near a homoclinic cycle to a bifocus}

\author[Alexandre A. P. Rodrigues]{Alexandre A. P. Rodrigues \\ Centro de Matem\'atica da Universidade do Porto \\ 
and Faculdade de Ci\^encias da Universidade do Porto\\
Rua do Campo Alegre 687, 4169--007 Porto, Portugal}
\address[A. A. P. Rodrigues]{Centro de Matem\'atica da Universidade do Porto\\
and Faculdade de Ci\^encias da Universidade do Porto\\ 
Rua do Campo Alegre 687, 4169--007 Porto, Portugal}
\email[A.A.P.Rodrigues]{alexandre.rodrigues@fc.up.pt}

\date{\today}

\begin{abstract}
In this paper, we explore the three-dimensional chaotic set near a homoclinic cycle to a hyperbolic bifocus at which the vector field has negative divergence. If the invariant manifolds of the bifocus satisfy a non-degeneracy condition, a sequence of hyperbolic suspended horseshoes arises near the cycle, with one expanding and two contracting directions. We extend previous results on the field and we show that the first return map to a given cross section may be approximated by a map exhibiting heteroclinic tangencies associated to two periodic orbits. When the cycle is broken, under an additional hypothesis about the coexistence of two heteroclinically related periodic points (one without dominated splitting into one-dimensional sub-bundles), the heteroclinic tangencies can be slightly modified in order to satisfy Tatjer's conditions for a generalized tangency of codimension two. This configuration may be seen as the organizing center, by which one can obtain Bogdanov-Takens bifurcations and therefore, strange  attractors, infinitely many sinks and non-trivial contracting wandering domains. 

 \end{abstract}

\maketitle
{\small\noindent\emph{MSC 2010:} 34C37, 37C29, 37G35, 37D45\\
\emph{Keywords:} Bifocus, Homoclinic cycle, Tatjer condition, Bogdanov-Takens bifurcation, Strange attractors, Wandering domains. }
\vspace{0.5cm}

\section{Introduction}
The homoclinic cycle to a bifocus provides one of the main examples of the occurrence of chaotic dynamics in four-dimensional vector fields. Examples of dynamical systems from applications where these homoclinic cycles play a basic role can be found in  \cite{Glendinning, GL}. Results in \cite{Barrientos} show that homoclinic orbits to bifoci arise generically in unfoldings of four-dimensional nilpotent singularities of codimension 4. Family (1.2) in \cite{Barrientos}  has been widely studied in the literature because of its relevance in many physical settings as, for instance, the study of travelling waves in the Korteweg-de Vries model.

The striking complexity of the dynamics near this type of homoclinic cycles has been discovered and investigated by Shilnikov \cite{Shilnikov67A, Shilnikov70}, who claimed the existence of a countable set of periodic solutions of saddle type. It was shown that, for any $N \in \mathbb{N}$ and for any local transverse section to the homoclinic cycle, there exists a compact invariant hyperbolic set on which the Poincar\'e map is topologically conjugate to the Bernoulli shift on $N$ symbols. A sketch of the proof has been presented in \cite{Wiggins}. In the works \cite{FS, LG}, the formation and bifurcations of periodic solutions were studied. Motivated by \cite{ALR, RLA}, the authors of \cite{IbRo} describe the hyperbolic suspended horseshoes that are contained in any small neighbourhood of a double homoclinic cycle to a bifocus and showed that switching and suspended horseshoes are strongly connected.

The spiralling geometry of the non-wandering set near the homoclinic cycle associated to the bifocus has been partially described in \cite{FS}, where the authors studied generic unfoldings of the cycle. Breaking the cycle, and using appropriate first return maps, the authors visualized the structure of the spiralling invariant set which exists near the cycle.  In the reversible setting, the authors of \cite{Hart, Rodrigues_AIMS} proved the existence of a family of non-trivial (non-hyperbolic) closed trajectories and subsidiary connections near this type of cycle. See also the works by Lerman's team \cite{KL2009, Lerman2000} who studied cycles to a bifocus in the hamiltonian context.  In the general context, the complete understanding of the structure of this spiralling set is a hard task.

An important open question related to the homoclinic cycle to a bifocus is what type of dynamics typically occurs.   
In this paper, we study the dynamics near a homoclinic cycle to a bifocus at which the vector field has negative divergence, so that the flow near the equilibrium contracts volume. We are particularly interested in the occurrence of \emph{strange attractors} and \emph{non-trivial wandering domains}. We show that these phenomena occur for small $C^1$-perturbations of the vector field which, in principle,  no longer have the original homoclinic cycle. 

\subsection{Strange attractors}
Many aspects contribute to the richness and complexity of a dynamical system. One of them is the existence of strange attractors. According to \cite{Homburg2002, Viana93}:

\begin{definition}
A \emph{(H\'enon-type) strange attractor} of a three-dimensional dissipative diffeomorphism $R$, defined in a compact and riemannian manifold, is a compact invariant set $\Lambda$ with the following properties:
\begin{itemize}
\item $\Lambda$  equals the closure of the unstable manifold of a hyperbolic periodic point;
\item the basin of attraction of $\Lambda$   contains an open set (and thus has positive Lebesgue measure);
\item there is a dense orbit in $\Lambda$ with a positive Lyapounov exponent (exponential growth of the derivative along its orbit);
\item $\Lambda$ is not hyperbolic.
\end{itemize}
A vector field possesses a (H\'enon-type) strange attractor if the first return map to a cross section does. In \cite{Viana97}, there is another definition of strange atractor contemplating \emph{two} expanding directions.  
\end{definition}

The rigorous proof of the strange character of an invariant set is a great challenge and the proof of the existence of such attractors in the unfolding of the homoclinic tangency is a very involving task  (as discussed in \cite{Barrientos}). 
Mora and Viana \cite{MV93} proved the emergence (and persistence) of strange attractors in the process of creation or destruction of the Smale horseshoes that appear through a bifurcation of a tangential homoclinic (sectionally dissipative) point. 

In the unfolding of a non-contracting Shilnikov cycle associated to a saddle-focus in $\RR^3$ (details in \cite{Shilnikov65}),  horseshoes appear and disappear by means of generic homoclinic bifurcations, leading to persistent non hyperbolic strange attractors like those described in  \cite{MV93}. These tangencies give rise to suspended H\'enon-like strange attractors. Without breaking the cycle, Homburg \cite{Homburg2002} proved the coexistence of strange attractors and attracting 2-periodic solutions near a homoclinic cycle to a saddle-focus in $\RR^3$, when moving the saddle-value (see also \cite{OS92}). He also proved that, despite the existence of strange attractors, a large proportion of points near the homoclinic cycle lies outside the basin of the attractor. Under a particular configuration of the spectrum of the vector field at the saddle (equal to 1), Pumari\~no and Rodr\'iguez \cite{PR2001} proved that infinitely many of these strange attractors can coexist in non generic families of vector fields with a Shilnikov cycle, for a positive Lebesgue measure set of parameters.

The homoclinic cycle to a bifocus in $\RR^4$ seems to be the scenario for more complicated dynamics than those inherent to the saddle-focus in $\RR^3$, where the existence of such strange attractors has been proved. As far as we know, no result has been established relating the existence of bifocal homoclinic bifurcations with the existence of (persistent) strange attractors. In Theorem \ref{main_thB}, we prove that the first return map to a given cross section of the a cycle associated to a bifocus can be $C^1$-approximated by another map exhibiting strange attractors. 


\subsection{Non-trivial wandering domains}
A wandering domain for a diffeomorphism is a non-empty connected open set whose forward orbit is a sequence of pairwise disjoint open sets.  More precisely:

\begin{definition}
A \emph{non-trivial wandering domain} (or just wandering domain) for a given map $R$ on a Riemannian manifold $M$ is a non-empty connected open set $D \subset M$ which satisfies the following conditions:
\medbreak
\begin{itemize}
\item $R^i(D)\cap R^j(D)=\emptyset$ for every $i,j\geq 0$ ($i\neq j$)
\item the union of the $\omega$-limit sets of points in $D$ for $R$, denoted by $\omega(D,R)$, is not equal to a single periodic orbit.
\medbreak
\end{itemize}
A wandering domain $D$ is called \emph{contracting} if the \emph{diameter} of $R^n(D)$ converges to zero as $n \rightarrow +\infty$.
\end{definition}

\medbreak

 In the early $20^{th}$ century, the authors of \cite{Bohl, Denjoy} constructed examples of $C^1$ diffeomorphisms on a circle which have contracting wandering domains where the union of the $\omega$--limit sets of points is a Cantor set. See also \cite{Martens} and references therein. Similar behaviours in different contexts may be found in  \cite{Bonatti94, Knill, Kwakkel, Norton}.
The existence of non-trivial wandering domains in nonhyperbolic dynamics has been studied by Colli and Vargas \cite{CV2001}, through  a countable number of perturbations on the gaps of an affine thick horseshoe with persistent tangencies. The conjecture about the existence of contracting wandering sets near Newhouse regions was recently proved in \cite{KS} for diffeomorphisms, and partially in \cite{LR2016} for flows, when the authors were exploring persistent historic behaviour realised by a set with positive Lebesgue measure. 

In the context of diffeomorphisms of the circle, if sufficient differentiability exists, Denjoy \cite{Denjoy} proved that non-trivial wandering domains could not exist. The absence of wandering domains is the key for the classification of one-dimensional unimodal and multimodal maps, in real analytic category, a subject which has been discussed in \cite{MvS, Lyubich, vS}.  
For rational maps of the Riemannian sphere, we address the reader to \cite{Martens}.  
Very recently,  Kiriki \emph{et al} \cite{KNS} presented a sufficient condition for three-dimensional diffeomorphisms having heterodimensional cycles (and thus non-transverse equidimensional cycles $C^1$-close) which can be  $C^1$-approximated by diffeomorphisms with non-trivial contracting wandering domains and strange attractors. 

 A natural question arises: is there a configuration for a flow  having a first return map with equidimensional cycles similar to those described in \S3 of \cite{KNS}? In other words, could we describe a general configuration (for a flow) giving a criterion for the existence of non-trivial wandering domains? Theorem \ref{main_thA} gives a partial answer to this question.
 
 \subsection{Structure of the paper}
 The goal of this paper is to show that a homoclinic cycle associated to a bifocus may be considered as a criterion for four-dimensional flows to be $C^1$-approximated by other flows exhibiting strange attractors and contracting non-trivial wandering domains. 
 
 The main results are stated and discussed in \S \ref{main results}, after collecting relevant notions in \S \ref{preliminaries}. Normal form techniques are used in \S \ref{Local_dynamics} to construct local and return maps. Section \ref{Local_Geom} deals with the geometrical structures which allow to get an understanding of the dynamics. After reviving the proof of the existence of hyperbolic horseshoes whose suspension accumulates on the cycle (see \S \ref{horseshoes}), owing the results of \cite{Broer96, KNS, OS92, Tatjer}, in \S \ref{perturbations} we $C^1$-approximate the first return map to the cycle by another diffeomorphism  exhibiting a Tatjer tangency. 
  This codimension-two bifurcation leads to Bogdanov-Takens bifurcations and subsidiary homoclinic connections associated to a sectionally dissipative saddle.  In \S \ref{final_proof}, we prove of Theorems \ref{main_thB} and \ref{main_thA}. The itinerary of their proof is summarised in Appendix \ref{app1}.

  The last step of the proof of Theorem \ref{main_thA} is similar to \cite{KNS}. For the sake of completeness, we revisit the proof, addressing the reader to the original paper where the proof has been done.
Throughout this paper, we have endeavoured to make a self contained exposition bringing together all topics related to the proofs. We have stated short results and we have drawn illustrative figures to make the paper easily readable.

\section{Preliminaries}

\label{preliminaries}
For $k\geq 5$ and $A$ a compact  and boundaryless subset of $\RR^4$, we consider a $C^k$ vector field $f: A \rightarrow \RR^4$ defining a differential equation:
 \begin{equation}
    \label{general1}
     \dot{x}=f(x)
 \end{equation}
and denote by $\varphi(t,x)$, with $t \in \RR$, the associated flow. In this section, we introduce some essential topics that will be used in the sequel. 

\subsection{$\omega$-limit set}
For a solution of (\ref{general1}) passing through $x\in \RR^4$, the set of its accumulation points as $t$ goes to $+\infty$ is the $\omega$-limit set of $x$ and will be denoted by $\omega(x)$. More formally, 
$$
\omega(x)=\bigcap_{T=0}^{+\infty} \overline{\left(\bigcup_{t>T}\varphi(t, x)\right)}.
$$ 
It is well known that $\omega(x)$  is closed and flow-invariant, and if the $\varphi$-trajectory of $x$ is contained in a compact set, then 
$\omega(x)$ is non-empty.

\subsection{Homoclinic cycle}
In this paper, we will be focused on an equilibrium $O$ of (\ref{general1}) such that its spectrum (\emph{i.e.} the eigenvalues of $df(O)$) consists of four non-real complex numbers whose real parts have different signs. It is what one calls a \emph{bifocus}. A \emph{homoclinic connection} associated to $O$ is a trajectory biasymptotic to $O$ in forward and backward times.

\subsection{Terminology}
In this subsection, we recall the terminology given in \cite{Tatjer} for diffeomorphisms. 
We begin with some definitions concerning fixed points of a diffeomorphism $R$ on a three-dimensional Riemannian manifold $M$, which may be considered as a compact subset of $\RR^3$.
\medbreak

Let $R : M \rightarrow M$ be a diffeomorphism,  $P \in M$ be a hyperbolic fixed point of $R$ (\emph{i.e.} $R(P) = P$) and denote by $\mu_1, \mu_2, \mu_3 \in \mathbb{C}$ the eigenvalues of $DR(P)$.

\begin{definition} Let $P$ be a fixed point of $R$. We say that:
\begin{itemize}
\item $P$ is \emph{dissipative} if the product of the absolute value of the eigenvalues of $DR(P)$ is less than 1 (\emph{i.e.} $|\mu_1 \mu_2 \mu_3|<1$).
\medbreak
\item $P$ is \emph{sectionally dissipative} if the product of the absolute value of any pair of eigenvalues of $DR(P)$ is less than 1 (\emph{i.e.} $|\mu_1 \mu_2 |<1$, $|\mu_2 \mu_3 |<1$  and $|\mu_1 \mu_3 |<1$  ).
\end{itemize}
\end{definition}
\medbreak
By the Stable Manifold Theorem \cite{PM}, given a
saddle fixed point $P$ (for the map $R$) there exist the stable and unstable invariant manifolds that we denote by $W^s(R, P)$, and $W^u (R, P)$ respectively, and are defined by
$$
W^s(R,P)=\left\{Q\in M: \lim_{n \rightarrow +\infty} R^n(Q)=P\right\} \quad \text{and} \quad W^u(R,P)=\left\{Q\in M: \lim_{n \rightarrow +\infty} R^{-n}(Q)=P\right\}.
$$
As usual, if it is not necessary, we shall not write explicitly the dependence of the invariant manifolds on the map $ R$.

\begin{definition}
The \emph{index of stability} of a hyperbolic fixed point is the dimension of its stable manifold.
\end{definition}

\medbreak
\begin{definition}[\cite{GGT2007, Tatjer}, adapted]
Suppose that $R$ is as above and $P$ is a saddle fixed point.
\begin{enumerate}
\medbreak
\item If  $|\mu_1| < |\mu_2|<1<|\mu_3|$, then:
\medbreak
\begin{enumerate}
\item  the \emph{strong stable manifold} of $P$, denoted by $W^{ss}(P)$,  lies on $W^s(P)$ and is tangent to the eigenspace associated to $\mu_1$ at $P$. The manifold $W^{ss}(P)$ is unique, one-dimensional and is as smooth as $R$;
\medbreak
\item the set $W^s(P)$ is foliated by leaves of a \emph{strong stable foliation} $\mathcal{F}^{ss}(P)$. Every leaf of $\mathcal{F}^{ss}(P)$ is transverse to the eigenspace associated to $\mu_2$ and  $W^{ss}_{\loc}(P)$ is one of the leaves;
\medbreak
\item the \emph{center-unstable manifold} of $P$, $W^{cu}(P)$, is an invariant manifold containing $W^u(P)$ and touching the  invariant linear subspace of $T_P M$ associated to the eigenvalues $\mu_2$ and $\mu_3$, at $P$. This manifold is (in general) $C^1$-smooth and it is not unique.
\end{enumerate}
\medbreak

\item If  $|\mu_1| <1< |\mu_2|<|\mu_3|$, the \emph{strong unstable foliation} of $P$, denoted by   $\mathcal{F}^{uu}(P)$ is the strong stable foliation of $P$ with respect $R^{-1}$ and the \emph{center-stable invariant manifold} of $P$ is the center-unstable manifold  of $P$ with respect to $R^{-1}$.
\end{enumerate}
\end{definition}

 More details about foliations and tangent bundles may be found in \cite{GGT2007, Homburg_book}. We now introduce the concept of signature (adapted to our purposes) of a periodic point adapted from \cite{Bonatti94}.
\begin{definition}
\label{signature}
Let $R$ be a diffeomorphism as above and let $P$ be a hyperbolic periodic point of period $\xi\geq 1$ with $\dim W^u(P)=2$. Given the finest $DR^{\xi}(P)$--invariant dominated splitting $E_p^u= E_1^u \oplus E_2^u$, we define the \emph{unstable signature} of $P$ to be the pair $(\dim  E_1^u, \dim E_2^u)$.  
\end{definition}


\subsection{Generalized homoclinic tangency}

Let $R$ be a diffeomorphism on $M$ which has a homoclinic tangency of a fixed point $P$. 
Suppose the derivative for $R$ at $P$ has real eigenvalues $\mu_1$, $\mu_2$ and $\mu_3$ satisfying $|\mu_1| < |\mu_2|<1<|\mu_3|$. In addition, assume that there are $C^1$ linearizing local coordinates $(x, y, z)$ for $R$  on a  small neighbourhood $U$ of $P$ (see Remark \ref{Gonchenko generalisation} below) such that:
$$
P = (0,0,0)\qquad \text{and} \qquad  R(x,y,z) = (\mu_1 x \, , \mu_2 y\, , \mu_3 z)
$$
for any $(x,y,z) \in U$ -- see Figure \ref{generalized_tangency}(a). In $U$, the local stable and unstable manifolds of $P$ are given respectively as:
$$
W^s_{loc} (P) = \{(x,y,0) : \quad |x|,|y| < \varepsilon\}, \qquad  W^u_{loc} (P)  = \{(0,0,z) ; \quad  |z| < \varepsilon \}
$$
for some small $\varepsilon > 0$. Moreover, as illustrated in Figure  \ref{generalized_tangency}(a),  one has the local strong stable $C^1$ foliation $\mathcal{F}^{ss}(P)$ 
in $W^s (P)$ such that, for any point $x_0 = (x^\star,y^\star, 0) \in W^s (P)$, the leaf $\ell ^{ss}(x_0)$ of $\mathcal{F}^{ss}(P)$ containing $x_0$ is given as:
$$
\ell ^{ss}(x_0) = \{(x, y^\star, 0) : \quad |x-x^\star| < \varepsilon \}.
$$

\begin{figure}[ht]
\begin{center}
\includegraphics[height=5.0cm]{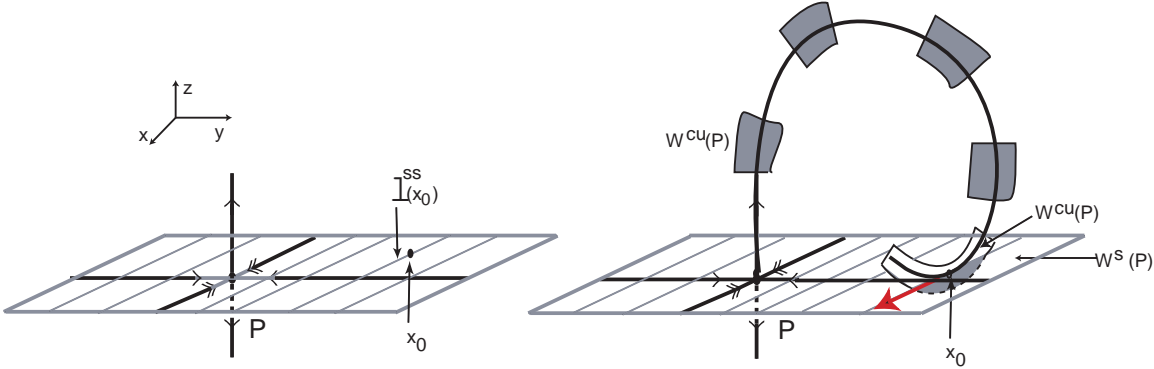}
\end{center}
\caption{\small Generalized tangency: (a) local coordinates near $P$ and (b) global geometry near the homoclinic orbit described in \cite{Tatjer}. The centre-unstable bundle ($DR$-eigenspace associated to the eigenvalues $\mu_2$ and $\mu_3$) at $P$ is extended along  $W^u(P)$.}
\label{generalized_tangency}
\end{figure}

\begin{remark}
The linearisation assumption is a \emph{Baire-generic} assumption for families of diffeomorphisms having saddle fixed points. Gonchenko \emph{et al} \cite{GGT2007} generalized the results of \cite{Tatjer} without this assumption. 
\label{Gonchenko generalisation}
\end{remark}

Suppose that the invariant manifolds of $P$ have a quadratic tangency at $x_0$. 
We introduce the definition of a new type of codimension two homoclinic bifurcation, which may be seen as a collision of a quadratic homoclinic tangency and a generalized homoclinic transversality (see \S 2.3 of \cite{Tatjer}).
\begin{definition}
\label{Tatjer_condition}
We say that a homoclinic tangency to $P$ satisfies the \emph{Tatjer condition} (type I of Case A of \cite{Tatjer}) if the following conditions hold (see Figure \ref{generalized_tangency}(b)):
\begin{itemize}
\medbreak
\item[\textbf{[T1]:}] the point $P$ is dissipative but not sectionally dissipative for $R$.
\medbreak
\item[\textbf{[T2]:}] the manifolds $W^u(P)$ and $W^s(P)$ have a quadratic tangency at $x_0$ which does not belong to the strong stable manifold of $P$, $W^{ss}(P)$.
\medbreak
\item[\textbf{[T3]:}] the manifold $W^u(P)$ is tangent to the leaf $\ell^{ss}(x_0)$ of $\mathcal{F}^{ss}(P)$ at $x_0$.
\end{itemize}
\end{definition}

\medbreak

\begin{itemize}
\item[\textbf{[T4]:}] the center-unstable manifold of $P$ is transverse to the surface defined by $W^s(P)$ at $x_0$.
\end{itemize}

\begin{remark}
For the quadratic tangency point $x_0\in M$, we consider the forward image $\overline{x}_0 = f^{-n_0} ({x}_0)$ for a large $n_0 \geq 0$. Let $U(\overline{x}_0)$ be the plane containing
$\overline{x}_0$ and such that $T_{\overline{x}_0} U(\overline{x}_0)$ is generated by $$\left(\frac{\partial}{\partial y}\right)\Big|_{\overline{x}_0},\left( \frac{\partial}{\partial z} \right)\Big|_{\overline{x}_0} \in T_{\overline{x}_0} M$$ (in local coordinates of $P$). 
Figure 5 of \cite{KNS} and Figure \ref{generalized_tangency}(b) of the present paper illustrate the positions of $U(\overline{x}_0)$ and $W^{s}(P)$ in different parts of the phase space. 
Using the terminology of \cite{KNS}, condition \textbf{[T4]} may be stated as: \emph{the sets $U(\overline{x}_0)$ and $W^s_\loc(P)$ are transverse at $\overline{x}_0$}. This concept is valid if we replace a fixed point of $R$ by a periodic orbit of $R$. 

\end{remark}

\bigbreak

\subsection{Denjoy surgery}
\label{Denjoy revision}
In this subsection, we review the Denjoy construction \cite{Denjoy} to obtain wandering domains on the circle. For $\omega \in \RR \backslash \mathbb{Q}$, define the map $\tau_\omega$ on $S^1= \RR \pmod{2\pi}$ as 
$$
\tau_\omega (\theta) = \theta + 2\pi \omega  \qquad \text{where }\qquad \theta \in S^1.
$$
Now take a point $\theta_0\in S^1$. Then, for each $n \in \NN$, we remove from $S^1$ the point  $\tau_\omega^n(\theta_0)$ and we replace it by a small enough interval $I_n$ satisfying the following properties:
\bigbreak
\begin{itemize}
\item for each $n\in \NN$, $\ell(I_n)>0$, where $\ell$ denotes the the 1-dimensional Lebesgue measure;
\bigbreak
\item $\sum_{j=0}^\infty \ell(I_n)<+\infty$.
\end{itemize}
\bigbreak
The result of this surgery is still a simple closed curve. For each $n\in \NN$,  extend the map $\tau_\omega$ by choosing a orientation-preserving diffeomorphism $h_n:I_n \rightarrow I_{n+1}$. It is easy to see that this extends $\tau_\omega$ to a homeomorphism of the new closed curve with no periodic points. Denjoy \cite{Denjoy} proved that the rotation number of the new map is irrational and no point in the interval $I_n$ ever returns to $I_n$. This is an example of a wandering domain for a map of the circle.  This construction cannot be performed in the $C^2$ category.

\section{Main results: overview}
\label{main results}

\subsection{Description of the problem}
\label{description}
The object of our study is the dynamics around a
homoclinic cycle $\Gamma$ to a bifocus defined on $\RR^4$ for which we give a rigorous description here. Let $\mathcal{X}^5(\RR^4)$ the Banach space of $C^5$ vector fields on $\RR^4$ endowed with the $C^5$-Whitney topology. Our object of study is a one-parameter family of $C^5$ vector fields $f_\lambda:\RR^4 \rightarrow \RR^4$ with a flow given by the unique solution $x(t)=\varphi(t,x) \in \RR^4$ of
\begin{equation}
\label{general}
\dot{x}=f_\lambda(x) \qquad x(0)=x_0\in \RR^4 \qquad \lambda \in \RR
\end{equation}
satisfying the following hypotheses for $\lambda=0$:
\medbreak
\begin{description}
 \item[\textbf{(P1)}]\label{P1}  The point $O=(0,0,0,0)$ is an equilibrium point.
 \medbreak
 \item[\textbf{(P2)}]\label{P2} The spectrum of $df_0(O)$ is $\{-\alpha_1 \pm i\omega_1,\alpha_2 \pm i\omega_2\}$ where $0<\alpha_2 < \alpha_1$ and $\omega_1, \omega_2>0$.
 \medbreak

 \item[\textbf{(P3)}]\label{P3} There is (at least) one trajectory $\gamma$ biasymptotic to $O$. The homoclinic cycle is given by $\Gamma=\{O\}\cup \gamma$.
 \medbreak
 \item[\textbf{(P4)}]\label{P4} For all $t \in \RR$, one has $\dim \left(T_{\gamma(t)}W^u(O) \cap T_{\gamma(t)}W^s(O)\right)=1$.
   \end{description}
 \medbreak
 In addition, we state the following \emph{non-degeneracy condition}:  
 \begin{description}
 \medbreak
 \item[\textbf{(P5)}]\label{P5}   For $ \lambda >0$ small, the cycle $\Gamma$ is broken in a generic way. 
  \medbreak
  \end{description}

\begin{remark}
Property (P4) is  equivalent to:
$$ \forall t \in \RR, \qquad codim \left(span\{T_{\gamma(t)}W^u(O), T_{\gamma(t)}W^s(O)\}\right)=1.$$
This property defines an open and dense condition in the $C^r$-topology, $r\geq 2$. 
\end{remark}

 Throughout the present  paper we confine ourselves to $\RR^4$ which may seem restrictive. A reduction from a higher-dimensional system to the four-dimensional case can be achieved by a \emph{center manifold reduction} near the cycle  \cite{Sandstede}.

\bigbreak
Let $\mathcal{T}$ be a small tubular neighbourhood of $\Gamma$ and let $\Sigma$ be a transverse section which cuts $\gamma$ at a unique point $q\in \Gamma$. For $\lambda=0$,  it has been proved in \cite{IbRo} that there exists an invertible first return map $R_{0}: S\rightarrow \Sigma$ defined on a set $S  \subset \Sigma $  whose closure contains $\{q\}$.  The next result summarises what is known about the dynamics of (\ref{general}) inside $\mathcal{T}$ (adapted to the case $\alpha_2<\alpha_1$).
\begin{theorem}[\cite{IbRo, Shilnikov70}, adapted]
\label{Th2b}
Under hypotheses (P1)--(P4) on the differential equation \eqref{general}, for any tubular neighbourhood $\mathcal{T}$ of the cycle $\Gamma$ and every cross-section to the flow $\Sigma\subset \mathcal{T}$ at a point  $q\in \Gamma$, there exist a set of initial conditions $S\subset \Sigma$, a $C^r$ return map $R_0: S\rightarrow \Sigma$ with $r\geq 5$, and a set of initial conditions $\Lambda \subset S$ such that:

\medbreak

\begin{enumerate}
\item[(a)]   The first return map to  $\Lambda$ has a countable family of uniformly hyperbolic compact invariant sets $\left(\Lambda_{M}\right)_{M \in \mathbb{N}: \,M \geq M_0}$ in each of which the dynamics is conjugate to a full shift over a finite number of symbols. 
\medbreak
\item[(b)] The set  $\Lambda :=\bigcup_{M=M_0}^\infty \Lambda_{M}$ accumulates in $\Gamma$ and the number of symbols coding the first return map to $S$ tends to infinity as we approach the cycle $\Gamma$.
\medbreak
\item[(c)] The map $R_{0}|_\Lambda$ induces on the tangent bundle $T\Lambda$ one expanding and two contracting directions.
\end{enumerate}
\end{theorem}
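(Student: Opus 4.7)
The plan is to construct the first return map $R_0$ explicitly, as a local transition near $O$ composed with a global map along $\gamma$, and then extract the horseshoes $\Lambda_M$ by a classical Conley-Moser argument adapted to the doubly spiraling geometry produced by the two rotating eigenvalue pairs.

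First I would fix $C^r$ linearizing coordinates $(u_1,u_2,v_1,v_2)\in\RR^2_s\oplus\RR^2_u$ in a small neighborhood $U$ of $O$ (available by a Sternberg-type theorem for the complex, generically non-resonant spectrum of (P2)), so that $W^s_{\loc}(O)=\{v=0\}$ and $W^u_{\loc}(O)=\{u=0\}$. Introducing polar blocks $u=(r_s\cos\theta_s,r_s\sin\theta_s)$, $v=(r_u\cos\theta_u,r_u\sin\theta_u)$, I would choose solid-torus cross-sections $\Sigma^{\inn}=\{r_s=\varepsilon\}\cap U$ and $\Sigma^{\out}=\{r_u=\varepsilon\}\cap U$, parametrized respectively by $(\theta_s,r_u,\theta_u)$ and $(r_s,\theta_s,\theta_u)$. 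Integrating the linearized flow gives the transit time $T(r_u)=\alpha_2^{-1}\log(\varepsilon/r_u)$ and the explicit local map
$$
\Pi^{\loc}(\theta_s,r_u,\theta_u)=\Bigl(\varepsilon(r_u/\varepsilon)^{\alpha_1/\alpha_2},\;\theta_s-\tfrac{\omega_1}{\alpha_2}\log(\varepsilon/r_u),\;\theta_u+\tfrac{\omega_2}{\alpha_2}\log(\varepsilon/r_u)\Bigr).
$$
A tubular flow-box along $\gamma$ produces a $C^r$ diffeomorphism $\Psi:\Sigma^{\out}\to\Sigma^{\inn}$, and I would set $R_0=\Psi\circ\Pi^{\loc}$ on the natural domain $S\subset\Sigma:=\Sigma^{\inn}$ whose closure contains $q=\gamma\cap\Sigma$.

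For (a) and (b) I would partition $S$ into the annular strips $H_M=\{\,T(r_u)\in[2\pi M/\omega_2,\,2\pi(M+1)/\omega_2)\,\}$ indexed by the number of full turns that the exiting trajectory performs in the unstable plane. The strips shrink exponentially onto $W^s_{\loc}(O)\cap\Sigma^{\inn}$, so $\bigcup_{M\geq M_0}H_M$ accumulates on $\Gamma$. Inside each $H_M$ the additional $\theta_s$-winding, of order $(\omega_1/\omega_2)M$, creates $N(M)$ disjoint sub-rectangles $R_1,\dots,R_{N(M)}$ whose images under $R_0$ fully cross $H_M$ in Markov position, with $N(M)\to\infty$ as $M\to\infty$. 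A standard fixed-point argument then yields the maximal invariant set $\Lambda_M=\bigcap_{n\in\ZZ}R_0^n\bigl(R_1\cup\cdots\cup R_{N(M)}\bigr)$ on which $R_0$ is topologically conjugate to the full Bernoulli shift on $N(M)$ symbols, giving (a) and completing the accumulation and symbol-growth statements of (b).

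For (c) I would analyze the derivative
$$
D\Pi^{\loc}=\begin{pmatrix}0 & c_1\,r_u^{\alpha_1/\alpha_2-1} & 0\\ 1 & \omega_1/(\alpha_2 r_u) & 0\\ 0 & \omega_2/(\alpha_2 r_u) & 1\end{pmatrix},
$$
whose large entries $\omega_j/(\alpha_2 r_u)\sim e^M$ come from the logarithmic transit time, whose strongly contracting entry $r_u^{\alpha_1/\alpha_2-1}\sim e^{-M(\alpha_1/\alpha_2-1)}$ comes from the eigenvalue ratio $\alpha_1/\alpha_2>1$, and whose identity block encodes an $O(1)$ angular factor. Composition with the generic bounded invertible map $D\Psi$ supplied by (P4) redistributes these blocks into a one-dimensional $DR_0$-invariant cone of strong expansion, of rate at least $Ce^M$, and a complementary two-dimensional $DR_0$-invariant cone on which one verifies uniform contraction. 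This cone-field verification is the main technical obstacle, since it requires tracking the non-trivial coupling of the two incommensurate rotation rates $\omega_1/\alpha_2$ and $\omega_2/\alpha_2$ through the generic mixing by $D\Psi$; it is precisely the computation carried out in \cite{IbRo} in the equidissipative case, and the present assumption $\alpha_2<\alpha_1$ only strengthens the relevant contraction exponent, so the argument transfers to our setting without essential modification.
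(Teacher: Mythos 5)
Your overall route---linearize near $O$, pass to bipolar cross-sections, compose the explicit local map with a generic global diffeomorphism, and extract Conley--Moser horseshoes whose hyperbolicity is checked by cone fields on the return-map derivative---is the same route the paper follows in Sections~\ref{Local_dynamics}--\ref{horseshoes}. The explicit form of $\Pi^{\loc}$ you write down, and the structure of $D\Pi^{\loc}$ with one exponentially small entry $\sim r_u^{\alpha_1/\alpha_2-1}$ and two entries $\sim 1/r_u$, matches the paper and correctly accounts for part (c).

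There is, however, a genuine error in your proof of parts (a) and (b). You claim that inside a single annular strip $H_M=\{T(r_u)\in[2\pi M/\omega_2,\,2\pi(M+1)/\omega_2)\}$ the $\theta_s$-winding is of order $(\omega_1/\omega_2)M$ and hence produces $N(M)\to\infty$ sub-rectangles in Markov position. This is not so: within $H_M$ the transit time $T$ ranges over an interval of fixed length $2\pi/\omega_2$, so $\theta_s$ winds only through $2\pi\omega_1/\omega_2$ radians --- a quantity \emph{independent of $M$}. What grows linearly in $M$ is the \emph{total} winding from the boundary of the cross-section down to $H_M$, but that is irrelevant to the number of components of $R_0(H_M)\cap H_M$. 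The paper establishes (for $\mathcal{S}_i=\Pi_O^{-1}(M_i^{out})$, which is essentially your $H_i$) that $V_{ij}=R_0(\mathcal{S}_i)\cap\mathcal{S}_j$ has exactly \emph{two} connected components for all admissible $i,j$. The unbounded symbol count asserted in part (b) comes instead from the unbounded number of strips $\mathcal{S}_N$ accumulating on $W^s_{\loc}(O)\cap\Sigma$, coupled with the fact that $R_0$ maps each strip across \emph{every} other sufficiently deep strip. Your construction of $\Lambda_M$ as the maximal invariant set inside a \emph{single} $H_M$ misses this inter-strip Markov structure entirely.

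You also omit the one genuinely nontrivial estimate that makes the inter-strip crossing work, namely Lemma~\ref{tec2}: with $a_N=\exp(-\alpha_1(\eta+2\pi N)/\omega_2)$ and $b_N=\exp(-\alpha_2(\eta+2\pi N)/\omega_2)$ one must show $b_{N+1}>a_N$ for all large $N$, which is precisely where the contraction hypothesis $\alpha_2<\alpha_1$ (i.e.\ $\delta>1$) enters. Without this inequality, $R_0(\mathcal{S}_i)$ need not stretch all the way across $\mathcal{S}_j$ and the Slab and Hyperbolicity Conditions (Lemmas~\ref{lemma12}--\ref{hyp_condition}) fail to hold, so the ``standard fixed-point argument'' you invoke has nothing to apply to. Supplying this estimate, and replacing the within-strip symbol growth by the strip-to-strip transition structure, would repair the argument and bring it in line with the paper's proof.
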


In \S \ref{horseshoes}, we review the main steps of the proof of Theorem \ref{Th2b}(a--c) and we clarify the role of both the stable and unstable manifolds of the periodic solutions of $(\Lambda_M)_{M\in \NN}$, in the overall structure of the maximal invariant set in $\mathcal{T}$.  We reconstruct the proof of the existence of horseshoes $(\Lambda_M)_{M \in \mathbb{N}}$ in order to understand the geometry of the problem and the dynamics emerging when the cycle $\Gamma$ is broken (used in \S 7).  
In the spirit of \cite{Rodrigues2_2013},  the proof of Theorem \ref{Th2b} allows us to conclude that the hyperbolic part of the shift dynamics does not trap most solutions in the neighbourhood of $\Gamma$. 
\medbreak
For $\lambda \approx 0$, let $R_{\lambda}: S_1\rightarrow \Sigma$ be the return map associated to $S_1\subset \Sigma$, after the addition of a perturbing term that breaks $\Gamma$ -- see (P5).  Of course, among the infinity of horseshoes that occur for $\lambda=0$, a finite number (but arbitrarily large) of them persists under small  smooth perturbations. 

\medbreak

\subsection{Main results and strategy}
For $\lambda=0$, the union of horseshoes accumulates on $\Gamma$ as well as the invariant manifolds of its periodic orbits. Under a technical condition, this implies that there are diffeomorphisms arbitrarily close to $R_{0}$ for which we can find a heteroclinic tangency associated to hyperbolic periodic points of $\Lambda_N, \Lambda_M$, for large $N,M\in \NN$, with different signatures. A small local perturbation may be performed and the previous configuration  may be approximated by another satisfying the condition described in Definition \ref{Tatjer_condition}, ensuring important dynamical properties nearby. One of them is our first main result: 

\begin{maintheorem}
\label{main_thB}
If $f$ satisfies hypotheses (P1)--(P5) and (TH), $\Sigma$ is a sufficiently small cross section to $\gamma$ and $R_0$ is the Poincar\'e map associated  to a subset of $\Sigma$, then there exists a diffeomorphism $G_A$ arbitrarily $C^1$-close to $R_0$, exhibiting (H\'enon-type) strange attractors  and/or infinitely many sinks. 
 \end{maintheorem}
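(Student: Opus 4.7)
The plan is to chain the accumulation of hyperbolic horseshoes on $\Gamma$ given by Theorem~\ref{Th2b} with a two-step $C^1$-perturbation that installs a Tatjer tangency of codimension two, and then to quote the known unfolding theory of that bifurcation.

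First, I would fix a small tubular neighbourhood $\mathcal{T}$ of $\Gamma$ and the cross section $\Sigma$ as in Theorem~\ref{Th2b}. For suitably large $M\neq N$, the horseshoes $\Lambda_M,\Lambda_N$ each contain a saddle periodic orbit whose two-dimensional invariant manifolds wind spirally along $\gamma$; as the family $(\Lambda_M)_M$ accumulates on $\Gamma$ together with these manifolds, they are forced to have many intersections. Using the technical hypothesis (TH), which supplies a pair of heteroclinically related periodic points one of which has unstable signature $(2,0)$ in the sense of Definition~\ref{signature} (no dominated splitting into one-dimensional sub-bundles), together with the generic breaking of the cycle (P5), an arbitrarily small $C^1$-perturbation of $R_0$ turns a transverse heteroclinic intersection between two such orbits into a quadratic heteroclinic tangency.

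Second, I would perform a local $C^1$-perturbation, supported in a small flow box around the tangency point and away from $O$, to realise simultaneously the four conditions \textbf{[T1]}--\textbf{[T4]} of Definition~\ref{Tatjer_condition}. The dissipative part of \textbf{[T1]} is inherited from the negative divergence $\alpha_2<\alpha_1$ at $O$: since every saddle orbit of $\Lambda_M$ spends a long time near $O$, the product of its three multipliers has absolute value less than one; failure of sectional dissipativity is automatic for the orbit of unstable index two supplied by (TH). Conditions \textbf{[T2]}--\textbf{[T4]} --- quadratic tangency outside $W^{ss}$, tangency of $W^u$ with a leaf of $\mathcal{F}^{ss}$, and transversality of the centre-unstable manifold with $W^s$ --- are codimension-one constraints along geometrically independent directions, and may therefore be arranged, one at a time, by three disjointly supported $C^1$-small perturbations, yielding a diffeomorphism $\widetilde{R}$ with the desired codimension-two configuration.

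Finally, I would invoke Tatjer's unfolding theorem together with \cite{Broer96, OS92}: a generic two-parameter unfolding of such a tangency exhibits, arbitrarily $C^1$-close to $\widetilde{R}$, Bogdanov--Takens bifurcations whose secondary subsidiary bifurcations create quadratic homoclinic tangencies of a sectionally dissipative periodic saddle. To these one applies Mora--Viana \cite{MV93} to obtain H\'enon-type strange attractors on a positive Lebesgue measure set of parameters, and the Newhouse phenomenon to obtain infinitely many sinks; either outcome yields the desired $G_A$. The step I expect to be the main obstacle is the second one: it requires tracking how the linearising coordinates at each of the two periodic saddles interact, through the transition maps near the bifocus, with the strong stable foliation, in order to verify that conditions \textbf{[T2]}--\textbf{[T4]} really are independent and can be moved separately by $C^1$-small perturbations without destroying one another or the surrounding horseshoe structure.
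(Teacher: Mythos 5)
Your outline matches the paper's at the endpoints: start from the accumulating horseshoes of Theorem~\ref{Th2b}, manufacture a tangency using the heteroclinically related periodic points of (TH), reach a configuration satisfying \textbf{[T1]}--\textbf{[T4]}, and then quote Tatjer, Broer \emph{et al}, Gonchenko \emph{et al}, Leal and Viana to extract strange attractors and/or infinitely many sinks. But the middle step, which you rightly flag as the obstacle, is where the proposal has a genuine gap, and the fix is not the independence argument you suggest.

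First, the conditions \textbf{[T1]}--\textbf{[T4]} of Definition~\ref{Tatjer_condition} concern a \emph{homoclinic} tangency of a single saddle $P$, whereas what your first step delivers is a \emph{heteroclinic} tangency between $W^u(P_N)$ and $W^s(P_M)$ for two distinct orbits. You never explain how to pass from one to the other. This conversion is the whole content of the paper's Fifth Perturbation (Proposition~\ref{prop_final}): one uses the transverse intersection $W^u(P_M)\pitchfork W^s(P_N)$ (item (5) of Lemma~\ref{new configuration2}) and the $\lambda$-Lemma to push a disk $D^u_0(x_0)\subset W^u(P_M)$ forward so that it accumulates on $W^u(P_N)$, and then a ``shifting down'' perturbation (Figure~\ref{shifting down}) turns the heteroclinic tangency $y_0\in W^u(P_N)\cap W^s(P_M)$ into a homoclinic tangency $z_0\in W^u(P_M)\cap W^s(P_M)$.

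Second, the claim that \textbf{[T2]}--\textbf{[T4]} can be arranged by three disjointly supported $C^1$-small local perturbations ``one at a time'' fails for \textbf{[T3]}: making $W^u(P)$ tangent to a \emph{leaf} of the strong stable/unstable foliation is a global alignment of directions, and the direction of $W^u(P_M)$ at the tangency point is a priori uniformly bounded away from the leaf direction, so no local $C^1$-small perturbation will rotate it into place. The paper gets around this precisely because of (TH): the lack of a dominated splitting at $P_N$ allows Franks' Lemma (Third Perturbation) to make the expanding eigenvalues of $P_N$ complex, and a further arbitrarily small perturbation (Fourth Perturbation) makes the rotation number irrational. Then the backward iterates $w^s_{-m}$ of the tangency direction of $W^s(P_M)$ become dense in directions as $m\to\infty$ (Lemma~\ref{7.5}), while the forward iterates $v^{uu}_n$ of the strong unstable leaf direction of $P_M$ accumulate by the $\lambda$-Lemma (Lemma~\ref{7.4}); the triangle inequality \eqref{triangular} then produces a pair of iterates whose directions are $2\varepsilon$-close, and only at that point does a small local rotation suffice to realise \textbf{[T3]}. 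Without the complex rotation mechanism the ``small local perturbation'' simply does not exist. So the missing idea in your proposal is exactly the KNS-type device: Franks + irrational rotation + $\lambda$-Lemma + shifting down, which is what the paper's Third through Fifth Perturbations implement. Once that is in place, the final quotation of Lemma~\ref{Tatjer_result} together with \cite{Leal,Viana93} gives $G_A$ as in the paper.
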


The proof of Theorem \ref{main_thB} may be found in \S \ref{8.1}, where we specify the technical hypothesis (TH). The strange attractors found in Theorem~\ref{main_thB} have one positive Lyapunov exponent.  It is  the closure of an invariant unstable manifold of a hyperbolic periodic orbit, and thus its shape might be very complicated. Tatjer's strategy allows us to conclude that they are contained in the center manifold associated to the Takens-Bogdanov bifurcation.  Using \cite{Barrientos}, one immediate consequence of Theorem \ref{main_thB} is:
\begin{corollary}
A vector field in a generic unfolding of a four-dimensional nilpotent singularity of codimension four may be $C^1$-approximated by a vector field containing a (H\'enon-type) strange attractor.
\end{corollary}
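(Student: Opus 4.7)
The plan is to view the corollary as a direct application of Theorem~\ref{main_thB} once the geometric setting of a bifocal homoclinic cycle is supplied by the results of \cite{Barrientos}. First, I would invoke Theorem~A of \cite{Barrientos}, which asserts that in a generic unfolding of a four-dimensional nilpotent singularity of codimension four there exist parameter values (indeed a codimension-one submanifold in parameter space) at which the vector field possesses an equilibrium $O$ whose linear part is of bifocus type, together with a homoclinic trajectory $\gamma$ biasymptotic to $O$. This immediately produces a cycle $\Gamma = \{O\} \cup \gamma$ of the kind considered in our paper.

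Next, I would check that, after possibly perturbing within the unfolding (without leaving the generic subset), the hypotheses (P1)--(P5) and the technical hypothesis (TH) of Theorem~\ref{main_thB} are satisfied. Hypotheses (P1) and (P3) are built into the conclusion of \cite{Barrientos}. For (P2), the genericity of the unfolding ensures that $0 < \alpha_2 < \alpha_1$ and $\omega_1,\omega_2 > 0$; a codimension-zero condition on the nilpotent jet allows us to select an open subfamily where the divergence at $O$ is negative. Hypothesis (P4), namely that the tangent spaces to $W^u(O)$ and $W^s(O)$ along $\gamma$ intersect in a one-dimensional subspace, is $C^r$-open and dense for $r\ge 2$, as noted in the remark following the list of hypotheses, so it can be arranged by an arbitrarily small perturbation inside the generic unfolding. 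Hypothesis (P5), the generic breaking of the cycle, is automatic along any transverse direction in parameter space. Finally, hypothesis (TH) concerns the existence of a pair of heteroclinically related periodic points with different unstable signatures among the horseshoes produced by Theorem~\ref{Th2b}; since this is an open condition in the $C^1$-topology on the return map and the unfolding is generic, a further small perturbation within the family places us inside the open set where (TH) holds.

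With (P1)--(P5) and (TH) in force, Theorem~\ref{main_thB} supplies a diffeomorphism $G_A$ that is $C^1$-arbitrarily close to the Poincar\'e map $R_0$ of the cycle and which exhibits H\'enon-type strange attractors. Realising $G_A$ as the first return map of a $C^1$-small perturbation of the vector field (using the fact that the $C^1$ topology on vector fields induces the $C^1$ topology on Poincar\'e maps in an appropriate cross-section, together with a standard extension through an isotopy supported in a small tubular neighbourhood of $\Sigma$), we obtain a vector field $C^1$-close to the original one that contains a H\'enon-type strange attractor, as required.

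The main obstacle I anticipate is the verification of hypothesis (TH) inside the concrete family coming from \cite{Barrientos}: the existence of two heteroclinically related periodic points with the specified distinct unstable signatures (one without a dominated splitting into one-dimensional subbundles) must be ensured, but since this condition is $C^1$-open and the countable family of horseshoes $(\Lambda_M)$ accumulates on $\Gamma$ with an unbounded number of symbols as $M\to\infty$, one expects to find such pairs for sufficiently large $M,N$; the required openness then allows us to stay within the generic unfolding while adjusting, so no loss of genericity occurs.
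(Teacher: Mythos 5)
Your proposal follows the same route as the paper: the corollary is stated in the paper as an ``immediate consequence'' of Theorem~\ref{main_thB}, using the result of \cite{Barrientos} that bifocal homoclinic orbits arise generically in unfoldings of four-dimensional nilpotent singularities of codimension four, and your argument fills in precisely those details (checking (P1)--(P5), applying Theorem~\ref{main_thB}, and realising $G_A$ as the first return map of a nearby flow, which the paper handles via the Lifting Principle of \cite{PR1983}).

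The one point that deserves more caution is your treatment of hypothesis (TH). You assert that (TH) is ``$C^1$-open'' and that a further small perturbation within the generic unfolding places you inside the open set where it holds, inferring existence of the required heteroclinically related pair from the accumulation of the horseshoes. But openness of the absence of dominated splitting as a condition on the return map does not by itself give you nonemptiness within the specific family produced by \cite{Barrientos}; the paper is explicit that the necessity and the automatic satisfaction of (TH) is an open problem (Open Question~(Q2)). So, strictly, either the corollary inherits (TH) as a standing hypothesis (which is how the paper's phrasing should be read, since it is stated as a consequence of Theorem~\ref{main_thB}), or else one must genuinely verify (TH) in that family, which neither you nor the paper actually does. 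You do flag this as the main obstacle, which is the right instinct, but the sentence ``one expects to find such pairs'' is not a proof. Apart from that caveat --- which is shared with the paper --- the argument is correct and essentially identical to the paper's intended derivation.
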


The authors of \cite{BIR2016} proved that suspended robust heterodimensional cycles can be found arbitrarily close to any non-degenerate bifocal homoclinic orbit of a Hamiltonian vector field. Perturbing the cycle in a special manner,  it is possible to obtain a sectionally dissipative homoclinic point and the existence of strange attractors near the original vector field may also be obtained.
\medbreak

Starting with a Tatjer homoclinic tangency, it is possible to find a two-parameter family of diffeomorphisms $ G_{(a,b)}$ and a sequence of parameters $(a_n, b_n)$ converging to $(0,0)$ for which for large $n\in \NN$, the diffeomorphism $G_{(a_n,b_n)}$ has a $n$-periodic smooth {attracting} circle generated by the Neimark-Sacker-Hopf bifurcation. Therefore, in the $C^1$-category, we may perform a Denjoy construction (as done in \cite{Denjoy}) for a tubular neighbourhood of the {attracting} invariant circle of any diffeomorphism to detect non-trivial wandering sets.  Our second main result is the following:

\begin{maintheorem}
\label{main_thA}
If $f$ satisfies hypotheses (P1)--(P5) and (TH), $\Sigma$ is a sufficiently small cross section to $\gamma$ and $R_0$ is the Poincar\'e map associated $\Sigma$, then there exists a diffeomorphism $G_B$ arbitrarily $C^1$-close to $R_0$, exhibiting a contracting non-trivial wandering domain $D$ and such that $\omega(D, G_B)$ is a nonhyperbolic transitive Cantor set without periodic points.
 \end{maintheorem}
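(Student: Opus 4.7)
The plan is to combine the conclusion of Theorem~\ref{main_thB} with a Denjoy-type surgery performed on an attracting invariant circle created by a Neimark--Sacker--Hopf bifurcation inside the Bogdanov--Takens unfolding of the Tatjer tangency. First, the construction leading to Theorem~\ref{main_thB} produces a diffeomorphism $\widetilde{G}$ arbitrarily $C^1$-close to $R_0$ with a homoclinic tangency of Tatjer type at a dissipative (but not sectionally dissipative) periodic point $P$ of some horseshoe $\Lambda_M$. Tatjer's renormalisation result in \cite{Tatjer} then identifies, up to affine rescaling in appropriate coordinates on a center manifold, a high iterate of $\widetilde{G}$ with the time-one map of a generic Bogdanov--Takens planar vector field, so we obtain a smooth two-parameter family $G_{(a,b)}$ with $G_{(0,0)}=\widetilde{G}$ realising the complete BT bifurcation diagram, as invoked in the corollaries following Theorem~\ref{main_thB}.

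Second, I would use the well-known structure of the BT unfolding: there is a smooth curve of Neimark--Sacker--Hopf bifurcations along which attracting $C^1$-smooth invariant circles are born, terminating at a curve of saddle homoclinic bifurcations. Along the Hopf curve I would select a sequence $(a_n,b_n)\to (0,0)$ for which $G_{(a_n,b_n)}$ has an $n$-periodic smooth attracting invariant circle $C_n$ whose internal rotation number $\omega_n$ is irrational; such parameters form a residual, full-measure subset of the Hopf locus. Because $P$ is dissipative, the two normal directions to $C_n$ are strongly contracting, so a $C^1$ stable foliation $\mathcal{F}^{ss}_n$ transverse to $C_n$ exists on a small tubular neighbourhood $N(C_n)$ and is uniformly exponentially contracted by $G_{(a_n,b_n)}$.

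Third, I would implement the Denjoy surgery of \S\ref{Denjoy revision} inside $N(C_n)$: pick $\theta_0\in C_n$, remove its rotation orbit $\{\tau_{\omega_n}^k(\theta_0)\}_{k\geq 0}$, replace each removed point by a small interval $I_k\subset C_n$ with $\ell(I_k)>0$ and $\sum_k \ell(I_k)<+\infty$, and saturate each $I_k$ along the transverse stable foliation to produce an open tube $D_k\subset N(C_n)$. Outside $\bigcup_k D_k$ the map is unchanged, and on each $D_k$ the original return map is composed with an orientation-preserving $C^1$ diffeomorphism $h_k\colon D_k\to D_{k+1}$ interpolating the circle dynamics, exactly as in \cite{KNS}. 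The resulting $C^1$ map $G_B$ is, by construction, a small $C^1$-perturbation of $G_{(a_n,b_n)}$, hence of $R_0$; taking $D=D_0$ gives $G_B^j(D)\cap G_B^k(D)=\emptyset$ for $j\neq k$, while $\diam(G_B^k(D))\to 0$ as $k\to+\infty$ because the fibres of $\mathcal{F}^{ss}_n$ contract exponentially. Thus $D$ is a contracting wandering domain.

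The main obstacle is the quantitative balance that makes the surgery feasible in $C^1$ without destroying the smallness of the perturbation: the lengths $\ell(I_k)$ and the $C^1$-norms of the interpolating $h_k$ must decay rapidly along the orbit (to keep $\|G_B-G_{(a_n,b_n)}\|_{C^1}$ arbitrarily small and to guarantee that the Denjoy intervals assemble into a Cantor set), yet remain bounded below on each individual step, which is precisely why this construction is obstructed in $C^2$ by Denjoy's theorem. Once this control is established, following \cite{KNS}, Denjoy's argument shows that $\omega(D,G_B)$ projects onto a minimal Cantor set in $C_n$ on which $G_B$ acts as an irrational Denjoy homeomorphism; combined with the uniform contraction along $\mathcal{F}^{ss}_n$, the set $\omega(D,G_B)$ is itself a transitive Cantor set containing no periodic orbit. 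Since $C_n$ lies on a Hopf locus and has a neutral tangential direction, $\omega(D,G_B)$ is nonhyperbolic, completing the proof.
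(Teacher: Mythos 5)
Your proposal follows essentially the same route as the paper: pass through Tatjer's generalized tangency to the Bogdanov--Takens two-parameter unfolding, select a Neimark--Sacker--Hopf parameter producing an attracting $n$-periodic invariant circle with irrational rotation number, and then perform a Denjoy-type $C^1$ surgery on that circle, thickened along the normally contracting directions, to manufacture the wandering domain. The only notable omission is that the paper also makes a small preliminary local perturbation to kill the higher-order terms of the Hopf normal form so that the restriction to the circle is an exact rigid irrational rotation before the Denjoy cut-and-paste is applied, and it checks explicitly that the $n-1$ intermediate iterates of the periodic circle are disjoint; both are routine and do not change the argument.
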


The proof of Theorem \ref{main_thA} may be found in \S \ref{8.2}.  Using the Lifting Principle \cite{PR1983}, the diffeomorphisms $G_A$ and $G_B$ (given in Theorems  \ref{main_thB} and \ref{main_thA}) may be realized and then we conclude the existence of a flow $C^1$-close to that of $f$ for which strange attractors / non-trivial wandering domains may be observable. 

\begin{remark}
The authors  of \cite{CV2001, KS} found non-trivial wandering domains near a homoclinic tangency of a planar diffeomorphism by adding a series of perturbations supported in specific open sets  which are contained in disjoint gaps on the complement of persistent tangencies. Our strategy to prove Theorem \ref{main_thA} is different. 
\end{remark}

\begin{remark}
Following the ideas of \cite{CV2001, KS, LR2016}, we might think of using Theorem 2 to exhibit  historic behaviour near $\Gamma$ for a set with positive Lebesgue measure. However, from Weyl Theorem, one knows  that the orbit of each point on $\mathbb{S}^1$ by the irrational rotation is equi-distributed for the Lebesgue measure, meaning that this is not the right approach to find historic behaviour for  a set with positive Lebesgue measure. 
\end{remark}

\subsection*{Open Questions}
We finish this section with a couple of open questions. First of all, notice that the strange attractors of Theorem \ref{main_thB} and  the non-trivial wandering domains of Theorem \ref{main_thA} are found for a map which is $C^1$-close to $ R\equiv R_0$, the first return map to $S\subset \Sigma$ of a vector field $f_0$ satisfying (P1)--(P4).  At the moment, our results do not provide any information about the dynamics of $R$.
So, the first natural question is: 
\medbreak
\textbf{(Q1)} Could we obtain strange attractors and non-trivial wandering domains for $R_0$ or in a parametric family unfolding $R$?
\medbreak
In Section  \S \ref{8.1}, we use one \emph{technical hypothesis} (TH) asking for the co-existence of two heteroclinically related periodic points, just one exhibiting dominated splitting into one-dimensional sub-bundles. Another open question is: 

\medbreak
\textbf{(Q2)} Do we really need this technical hypothesis or might it be a natural consequence when one slightly moves the saddle-value $\delta>1$?
\medbreak

Although we are not able to prove, for the moment, the existence of two-dimensional strange attractors close to $\Gamma$, the scenario described by a vector field satisfying (P1)--(P5) is the natural setting in which topological two-dimensional strange attractors might occur (see \S 5.1 of \cite{FS} where periodic orbits with two positive Lyapunov exponents have been found). Therefore, we may ask:
\medbreak
\textbf{(Q3)} Could we find two-dimensional strange attractors when the cycle is broken? This configuration would give rise to what the authors of \cite{GT} call \emph{hyperchaos}.
\medbreak

We defer these tasks for future work.


\section{Return maps}
\label{Local_dynamics}
Using local coordinates near the bifocus we will provide a construction of local and global transition maps. In the end, a return map around the homoclinic cycle will be defined.
\subsection{Normal form near the bifocus}
One needs the  normal form that is used when studying the general saddle-focus case. This normal form has been constructed in Appendix A of \cite{Shilnikov et al}. 
Let $f(x, \lambda)$ as in (\ref{general})  and let $A$ and $B$ the  $(2\times 2)$-matrices as in \cite{Shilnikov et al} that depend on the parameter $\lambda$. It is clear that
$$
 A(0)= \begin{pmatrix} -\alpha_1 & \omega_1 \\ \omega_1 &-\alpha_1  \end{pmatrix} \qquad \text{and} \qquad B(0)= \begin{pmatrix} \alpha_2 & \omega_2 \\ \omega_2 &\alpha_2  \end{pmatrix}.
$$
\bigbreak
The generalization of Bruno's theorem may be stated as:
\begin{theorem}[Shilnikov \emph{et al} \cite{Shilnikov et al}, adapted]
\label{normal form1}
There is a local $C^{5}$ transformation near $O$ such that in  the new
coordinates $(x,y)=(x_1,x_2,x_3, x_4),$ the system casts as follows
\begin{equation}
\label{eq1}
\left\{ 
\begin{array}{l}
\dot x = A(\lambda) x + f(x,y, \lambda)x, \\ 
\dot y = B(\lambda) y + g(x,y, \lambda)y
\end{array}
\right.
\end{equation}
where: 
\begin{itemize}
\item  $A(\lambda)$ and $B(\lambda)$ are $(2\times 2)$-matrices functions; 
\medbreak
\item $f,g$ are $C^{5}$-smooth with respect to $(x,y)$, their first derivatives are $C^{4}$-smooth with respect to $(x,y, \lambda)$ and 
\medbreak
\item the following identities are valid for every $x=(x_1, x_2)$, $y=(y_1, y_2)$ and $\lambda \approx 0$:
$$
f(0,0, \lambda)=0,\; \quad g(0,0, \lambda)=0,\; \quad  f(x,0,\lambda)=0,\; \quad  g(0,y, \lambda)=0
$$
and 
$$
f(0,y, \lambda)=0,\; \quad g(x,0, \lambda)=0.
$$
\end{itemize}
\end{theorem}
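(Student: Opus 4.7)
My plan is to achieve the normal form via a sequence of smooth-in-$\lambda$ changes of coordinates, each imposing one or two of the required vanishing conditions, and to carry the prescribed regularity through the construction. Throughout, I would exploit two structural features: the stable spectrum $\{-\alpha_1 \pm i\omega_1\}$ and the unstable spectrum $\{\alpha_2 \pm i\omega_2\}$ of $df_\lambda(O)$ lie in strictly opposite half-planes for all small $\lambda$, so the spectral projectors depend smoothly on $\lambda$ and the two spectra produce no cross-resonance; and each spectrum, being a complex conjugate pair with nonzero real part, is internally non-resonant. As a first step, a linear change of basis built from the smoothly $\lambda$-parameterized spectral projectors puts the linear part in the block-diagonal form $\mathrm{diag}(A(\lambda),B(\lambda))$, giving $\dot x = A(\lambda)x + F$, $\dot y = B(\lambda) y + G$ with $F, G$ of class $C^5$ and vanishing to order two at the origin.

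Next, I would apply the parameter-dependent Stable Manifold Theorem to obtain $W^s_\loc(\lambda) = \{y = s(x,\lambda)\}$ and $W^u_\loc(\lambda) = \{x = u(y,\lambda)\}$ and flatten both manifolds to the coordinate planes via $(x,y) \mapsto (x - u(y,\lambda),\, y - s(x,\lambda))$ (well defined for small arguments by the implicit function theorem). Invariance forces $F(0,y,\lambda) \equiv 0$ and $G(x,0,\lambda) \equiv 0$, and Hadamard's lemma yields the factorizations $F = f\cdot x$ and $G = g\cdot y$. A third change of coordinates imposes $f(x,0,\lambda) \equiv 0$ and $g(0,y,\lambda) \equiv 0$: the restricted flow on $W^s_\loc$ has linear part $A(\lambda)$ with non-resonant spectrum, so Sternberg's smooth linearization theorem with parameters provides a $C^5$ near-identity transformation $\tilde x = x + h(x,\lambda)$ conjugating it to $\dot{\tilde x} = A(\lambda)\tilde x$; extended trivially in $y$ it preserves both flattenings, and the symmetric construction linearizes the flow on $W^u_\loc$.

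The most delicate step imposes the remaining cross conditions $f(0,y,\lambda) \equiv 0$ and $g(x,0,\lambda) \equiv 0$, which are equivalent to the vanishing of all monomials $x^1 y^k$ in $F$ and $x^k y^1$ in $G$ for $k \geq 1$. I would proceed by a finite Poincar\'e--Dulac iteration: at each order $k \geq 2$ I look for an admissible transformation that preserves the four conditions already established and solves the homological equation $\mathcal{L} Q_k = (\text{current order-}k\text{ cross-term})$, where $\mathcal{L}$ acts on a graded polynomial space via commutation with the linearized vector field. Because the eigenvalues of $A(\lambda)$ and $B(\lambda)$ lie in strictly opposite half-planes, $\mathcal{L}$ is invertible on each graded piece with bounds uniform in $\lambda$ near $0$, so each homological equation has a unique solution of the required smoothness, and finitely many iterations suffice within the available $C^5$ class. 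The main obstacle I anticipate is careful bookkeeping of parameter regularity: the Stable Manifold and Sternberg steps each lose one derivative in $\lambda$, which explains the asymmetric $C^5$-in-$(x,y)$ versus $C^4$-in-first-$\lambda$-derivatives statement of the theorem.
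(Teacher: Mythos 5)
Your first three steps (spectral block-diagonalization smooth in $\lambda$, flattening of $W^s_\loc$ and $W^u_\loc$, Sternberg linearization of the restricted flows on those manifolds using the internal non-resonance of each complex-conjugate pair) are the right skeleton and match how the result is actually established in Appendix~A of the cited Shilnikov--Shilnikov--Turaev--Chua book, to which the present paper defers rather than supplying its own proof. The difficulty is with your final step. The conditions $f(0,y,\lambda)\equiv 0$ and $g(x,0,\lambda)\equiv 0$ are \emph{exact} identities on a full neighbourhood, i.e.\ all monomials $x^{1}y^{k}$ (resp.\ $x^{k}y^{1}$) for \emph{every} $k\ge 1$ must vanish. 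A finite Poincar\'e--Dulac iteration cannot achieve this: it removes the cross-terms only up to a finite order $N$ and leaves a residual $f(0,y,\lambda)=O(|y|^{N+1})$, which the theorem does not allow. The claim that ``finitely many iterations suffice within the available $C^5$ class'' confuses normalization up to a finite jet with the genuine functional identity required here.

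The missing ingredient is a smooth (non-polynomial) straightening of the strong stable/unstable \emph{foliations}. After your three preliminary steps, what remains to be killed is the $y$-dependent (resp.\ $x$-dependent) linear-in-$x$ (resp.\ linear-in-$y$) coefficient, namely $f(0,y,\lambda)$ viewed as a matrix valued function along the unstable flow. One performs a fiberwise-linear change $x\mapsto T(y,\lambda)\,x$ with $T(0,\lambda)=I$, and $T$ must satisfy a linear first-order cohomological equation transported by the flow $\dot y=B(\lambda)y$ on $W^u_\loc$: schematically $\dot T = AT - T(A+f(0,y,\lambda))$, to be solved as a bounded solution over forward time. Its solvability with the prescribed finite regularity rests on the exponential dichotomy coming from the gap between the stable and unstable spectra (the same gap you invoke), but via a contraction on a weighted function space rather than a graded polynomial space; the symmetric construction $y\mapsto S(x,\lambda)y$ handles $g(x,0,\lambda)$. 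If you replace your Poincar\'e--Dulac step by this foliation-straightening/cohomological-equation argument (and track the derivative losses it introduces, which are in fact the main source of the asymmetric $C^5$/$C^4$ regularity statement), the proposal becomes a correct reconstruction of the Ovsyannikov--Shilnikov proof.
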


 Without loss of generality, we are assuming that the neighbourhood $V_O$ in which the flow can be $C^5$-linearized near $O$ is a solid hypertorus. Rescaling the local coordinates, the solid hypertorus can be considered as the product of two unitary disks.

\medbreak

\subsection{Local coordinates near $O$}
Let us consider bipolar coordinates $(r_s, \phi_s,r_u, \phi_u) \in [0,1] \times \RR \pmod{2\pi}\times [0,1] \times \RR\pmod{2\pi}$ on $V_{O}$ such that
$$
x_1=r_s \cos(\phi_s), \qquad x_2=r_s \sin(\phi_s), \qquad x_3=r_u \cos(\phi_u)\qquad \text{and} \qquad
x_4=r_u \sin(\phi_u).
$$
In these coordinates the local invariant manifolds are given by
$$
W^s_{loc}(O) \equiv \{ r_u=0\} \qquad \text{and} \qquad   W^u_{loc}(O) \equiv \{ r_s=0\}
$$
and, up to high order terms,  we can rearrange system (\ref{eq1}) as
\begin{equation}
\label{equation1}
\dot{r}_s=-\alpha_1 r_s, \qquad \dot{\phi}_s=\omega_1, \qquad \dot{r}_u=\alpha_2 r_u \qquad \text{and} \qquad  \dot{\phi}_u=\omega_2.
\end{equation}
Solving the above system explicitly we get
$$
r_s(t)=r_s(0) e^{-\alpha_1 t}\qquad\phi_s(t)=\phi_s(0)+ \omega_1 t   \qquad  r_u(t)=r_u(0) e^{\alpha_2 t} \qquad   \phi_u(t)=\phi_u(0)+ \omega_2t.
$$

\subsection{Cross sections near $O$}
In order to construct a first return map around the homoclinic cycle $\Gamma$ we consider two solid tori $\Sigma^{in}_O$ and $\Sigma^{out}_O$ defined by
\begin{enumerate}
\item[(a)] $\Sigma^{in}_O\equiv~\{r_s=1 \}$ with coordinates  $(\phi_s^{in}, r_u^{in}, \phi_u^{in})$,
\medbreak
\item[(b)] $\Sigma^{out}_O\equiv \{r_u=1 \}$  with coordinates $(r_s^{out}, \phi_s^{out}, \phi_u^{out}).$
\end{enumerate}
These sets, depicted in Figure \ref{cross_sections}, are transverse to the flow.
\bigbreak
By construction, trajectories starting at interior points of $\Sigma^{in}_O$ go inside the hypertorus $V_O$ in positive time. Trajectories starting at interior points of $\Sigma^{out}_O$ go outside $V_O$ in positive time.
Intersections between local invariant manifolds at $O$ and cross sections are circles parametrized as
\begin{equation}
\label{local s O}
W^s_{loc}(O) \cap \Sigma^{in}_O=\{r_u^{in}=0 \quad  \text{and } \quad 0\leq   \phi^{in}_s< 2\pi\}
\end{equation}
and
\begin{equation}
\label{local u O}
W^u_{loc}(O) \cap \Sigma^{out}_O=\{r_s^{out}=0 \quad  \text{and } \quad 0\leq \phi^{out}_u < 2\pi \}.
\end{equation}

\begin{figure}
\begin{center}
\includegraphics[height=6.9cm]{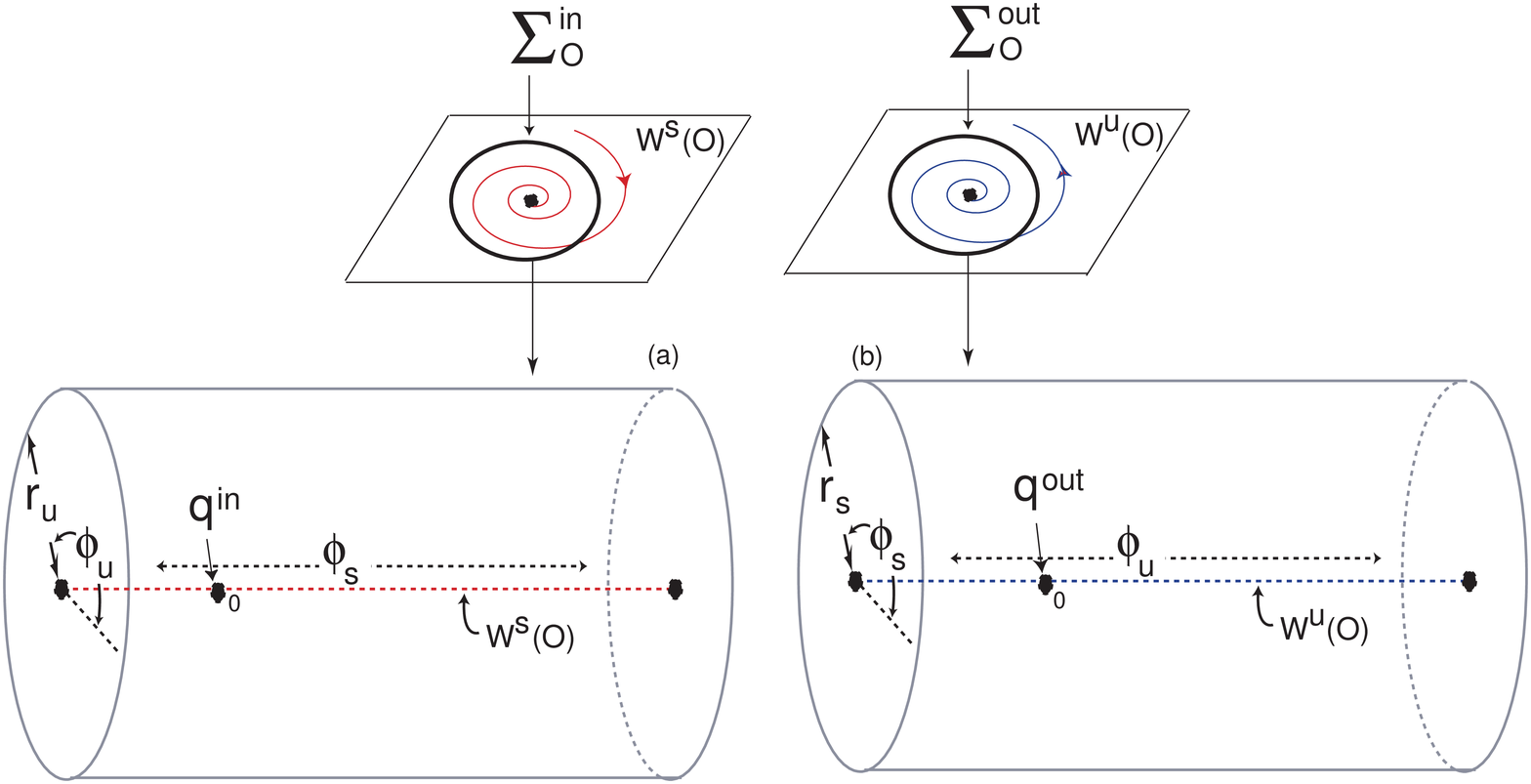}
\end{center}
\caption{\small Coordinates and cross sections near $O$: (a) $\Sigma^{in}_O$ and (b) $\Sigma^{out}_O$. Superscripts \emph{in} and \emph{out} are omitted.}
\label{cross_sections}
\end{figure}

\subsection{Local transition maps near $O$}
\label{local near O}
The time of flight from $\Sigma^{in}_O$ to $\Sigma^{out}_O$ of a trajectory with initial condition $(\phi_s^{in}, r_u^{in}, \phi_u^{in}) \in \Sigma^{in}_O\backslash W^s_{loc}(O)$ only depends on the coordinate $r_u^{in}> 0$ and is given by
$$
T(\phi_s^{in}, r_u^{in}, \phi_u^{in}) =-\frac{\ln(r_u^{in})}{\alpha_2}>0.
$$
Since $r_u^{in} >0$, $T$ is well defined and non-negative. Hence the local map
$$
\Pi_{O}: \Sigma^{in}_O \backslash W^s_{loc}(O) \rightarrow \Sigma^{out}_O
$$
is given by
\begin{equation}
\label{local_flow_eq}
\left(\begin{array}{l}\phi_s^{in}\\ \\r_u^{in}\\ \\\phi_u^{in}\end{array}\right)
\mapsto
\left(\begin{array}{l}r_s^{out}\\ \\\phi_s^{out} \\ \\ \phi_u^{out}\end{array}\right)
=
\left(\begin{array}{l}
\left(r_u^{in}\right)^{\frac{\alpha_1}{\alpha_2}}
\\
\\
\phi_s^{in}-\frac{\omega_1}{\alpha_2} \ln (r_u^{in}) \pmod{2\pi}
\\
\\
\phi_u^{in}-\frac{\omega_2}{\alpha_2} \ln (r_u^{in}) \pmod{2\pi}
\end{array}\right).
\end{equation}
Since $\delta:=\frac{\alpha_1}{\alpha_2}>1$ (see (P2)), the flow is volume-contracting near $O$. 

\subsection{The inverse of $\Pi_O$}
It follows from (\ref{local_flow_eq}) that the inverse of the local transition map
$$
\Pi_O^{-1}:\Sigma_O^{out}\setminus W^{u}_{loc}(O) \rightarrow \Sigma_O^{in}
$$
can be written as
\begin{equation}
\label{inverse_local_flow_eq}
\left(\begin{array}{l}r_s^{out}\\ \\\phi_s^{out} \\ \\ \phi_u^{out}\end{array}\right)
\mapsto
\left(\begin{array}{l}\phi_s^{in}\\ \\r_u^{in}\\ \\\phi_u^{in}\end{array}\right)
=
\left(\begin{array}{l}
\phi_s^{out}+\frac{\omega_1}{\alpha_1} \ln (r_s^{out}) \pmod{2\pi}
\\
\\
\left(r_s^{out}\right)^{\frac{\alpha_2}{\alpha_1}}
\\
\\
\phi_u^{out}+\frac{\omega_2}{\alpha_1} \ln (r_s^{out}) \pmod{2\pi}
\end{array}\right).
\end{equation}

\subsection{Global transition the return map}
\label{global transition}
Here, we define the global map from $\Sigma_O^{out}$ to $\Sigma_O^{in}$ corresponding to a flow-box around the homoclinic connection $\gamma$. \bigbreak
 By taking $V_{O}$ small enough, we can assume that $\gamma$ intersects each one of the cross sections $\Sigma_O^{in}$ and $\Sigma_O^{out}$ at exactly one point, $q^{in}$ and $q^{out}$,  defined by:
$$
\{q^{in}\}=\gamma \cap \Sigma^{in}_O \qquad \mbox{and} \qquad \{q^{out}\}=\gamma \cap \Sigma^{out}_O,
$$
as shown in Figure \ref{cross_sections}. Without loss of generality, we may assume that the $\phi_s^{in}$, $\phi_u^{out}$  coordinates of $q^{in}$ and $q^{out}$ are zero.
\bigbreak
Therefore, there exists $T_1>0$ such that $\varphi(T_1,q^{out})=q^{in}$ and $\varphi([0,T_1[,q^{out})\cap \Sigma_{O}^{in}=\varnothing$. Using the regularity of the flow and the Tubular Flow Theorem \cite{PM}, it follows that, given any neighbourhood $C^{in} \subset \Sigma_{O}^{in}$ of $q^{in}$, there exist a neighbourhood $C^{out} \subset \Sigma_{O}^{out}$ of $q^{out}$ and $\tau_1: C^{out}\rightarrow \mathbb{R}$ such that:
\medbreak
\begin{itemize}
\item $\tau_1$ is a $C^r$ map, $r\geq 5$;
\medbreak
\item $\tau_1(q^{out})=T_1$;
\medbreak
\item $\varphi(\tau_1(q),q)\in C^{in}$ for all $q\in C^{out}$.
\end{itemize}
\bigbreak
Now, we define the global map $\Psi_1:C^{out} \rightarrow C^{in}$ as
$\Psi_1(q)=\varphi(\tau_1(q),q)$, with $q \in C^{out}$.
 The map $\Psi_1$ represents the global reinjection from $\Sigma_O^{out}$ to $\Sigma_O^{in}$ following $\Gamma$. 
Then we consider the set $S=\Pi_{O}^{-1}(C^{out}\setminus W_{loc}^{u}(O)) \subset \Sigma_{O}^{in}$ and define the first return map
$
R_{0}: S\rightarrow C^{in}  
$
as
\begin{equation}
\label{F1}
R_{0}|_{S}=\Psi_1 \circ \Pi_{O}.
\end{equation}
The location of $S$ is sketched in Figure \ref{torus}.
Note that $R_{0}$ is of class $C^r$, with $r\geq 5$, and is well defined. See also \cite{FS, IbRo}.
\medbreak

\begin{figure}[ht]
\begin{center}
\includegraphics[height=4.9cm]{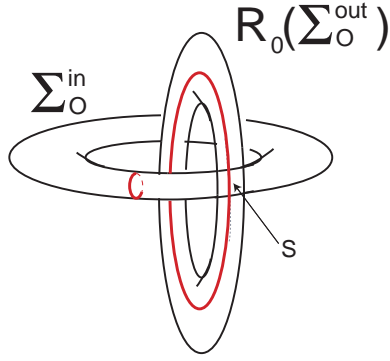}
\end{center}
\caption{\small Global geometry around the bifocus. }
\label{torus}
\end{figure}

\subsection{Technical notation}
Without loss of generality, we assume that the neighbourhoods  $C^{in}$ and $C^{out}$ introduced above, may be parameterised as:
\begin{eqnarray*}
C^{in}=\left\{ (\phi_s^{in}, r_u^{in}, \phi_u^{in}):
\phi_s^{in} \in [-\varepsilon^{in},\varepsilon^{in}],
r_u^{in}\in [0,c^{in}],
\phi_u^{in} \in [0,2\pi] \right\} \subset \Sigma_O^{in},
\end{eqnarray*}
for some small constants $0<\varepsilon^{in}, c^{in}\leq 1$, and
\begin{equation}
\label{ciout}
C^{out}=\left\{ (r_s^{out}, \phi_s^{out}, \phi_u^{out}):
r_s^{out}\in [0,c^{out}],
\phi_s^{out} \in [0,2\pi], \phi_u^{out} \in [-\varepsilon^{out},\varepsilon^{out}] \right\} \subset \Sigma_O^{out}
\end{equation}
for some small enough constants  $0<\varepsilon^{out}, c^{out}\leq 1$.

\subsection{A model for the transition}
Up to high order terms, the global map $\Psi_1:C^{out} \rightarrow C^{in}$ may be given, in rectangular coordinates, by:
\begin{eqnarray*}
\label{global_flow_eq}
\left(\begin{array}{c}\overline{X}\\ \\ \overline{Y}\\ \\ \overline{Z}\end{array}\right)
&\mapsto&
A
\left(\begin{array}{c}r_s^{out} \cos(\phi_s^{out})\\ \\  r_s^{out} \sin(\phi_s^{out}) \\ \\ \phi_u^{out}\end{array}\right) + \ldots
\end{eqnarray*}
where $A$ is a linear map such that $\det A \neq 0$ and where the dots represent the high order terms effects of $\lambda$ (see (2.6) of \cite{FS}) . A simple choice of $A$ compatible with hypothesis (P4) is
$
 \left(\begin{array}{lll}0 & 1 & 0\\ 0 & 0 & 1 \\ 1 & 0 & 0\end{array}\right)
$
and thus the return map $R_0$ to $C^{in}\subset \Sigma_O^{in}$ may be written as:
\begin{eqnarray}
\label{global_flow_eq1}
\left(\begin{array}{l}\overline{X}\\ \\ \overline{Y}\\ \\ \overline{Z} \end{array}\right)
&\mapsto&
 \left(\begin{array}{lll}0 & 1 & 0\\ 0 & 0 & 1 \\ 1 & 0 & 0\end{array}\right)
\left(\begin{array}{c}r_s^{out} \cos( \phi_u^{out}) \\ \\ r_s^{out} \sin( \phi_u^{out}) \\ \\ \phi_u^{out}\end{array}\right)+ \ldots.
\end{eqnarray}
 Taking into account \eqref{local_flow_eq}, the previous map is equivalent to:
\begin{eqnarray*}
\label{global_flow_eq2}
\left(\begin{array}{l} \overline{X}\\ \\ \overline{Y}\\ \\ \overline{Z}   \end{array}\right)
&\mapsto&
 \left(\begin{array}{lll}0 & 1 & 0\\ 0 & 0 & 1 \\ 1 & 0 & 0\end{array}\right)
\left(\begin{array}{c} \left(r_u^{in}\right)^\delta \cos( \phi_s^{in}-\frac{\omega_2}{\alpha_2}\ln (r_u^{in})) \\ \\ \left(r_u^{in}\right)^\delta \sin( \phi_s^{in}-\frac{\omega_1}{\alpha_2}\ln (r_u^{in}))\\ \\  \phi_u^{in}-\frac{\omega_1}{\alpha_2}\ln (r_u^{in})\end{array}\right). 
\end{eqnarray*}
Since $(r_s, \phi_s,r_u, \phi_u)$ are bipolar coordinates in $\Sigma_O^{in}= \{ r_s=1\}$, we get:
$$
(r_u^{in})^2 = X^2+Y^2, \qquad  \phi_u^{in} = \arctan \left( \frac{Y}{X}\right) + k\pi,  k\in \ZZ \qquad \text{and} \qquad  \phi_s^{in} = {Z}+ \ldots ,
$$
and thus we may write an explicit expression for $R_0$:
\begin{equation}
\label{global_transition1}
\left(\begin{array}{l}X\\ \\ Y\\ \\ Z\end{array}\right)
\mapsto
 \left(\begin{array}{c} \left(X^2+Y^2\right)^\frac{\delta}{2} \sin\left( Z -\frac{\omega_2}{ 2\alpha_2}\ln \left(X^2+Y^2\right)\right) \\ \\  \arctan \left( \frac{Y}{X}\right) -\frac{\omega_1}{2 \alpha_2}\ln (X^2+Y^2) \\ \\   \left(X^2+Y^2\right)^\frac{\delta}{2} \cos\left( Z -\frac{\omega_2}{ 2\alpha_2}\ln \left(X^2+Y^2\right)\right)\end{array}\right) + \ldots,\end{equation}
a model similar to equations (2.6) and (3.6) of \cite{FS}.

\bigbreak
\begin{remark}
Our construction holds if the global map considered in \eqref{global_flow_eq1} is another model compatible with hypotesis (P4). 
\end{remark}

\section{Local geometry near the cycle }
\label{Local_Geom}

\subsection{Notions related with spiralling behaviour}
In this section, we introduce the notions of segment, spiral, helix, spiralling sheet and scroll.  These definitions are adapted from \cite{Hart, IbRo}.
\bigbreak
\begin{definition}
A segment $s$ in $\Sigma_O^{in}$ is a regular curve  $s:[0,1] \rightarrow \Sigma_O^{in}$ parametrized by $t$ that meets $W^s_{loc}(O)$ transversely and only at a point $s(0)$ and such that writing $s(t)= (\phi_s^{in}(t), r_u^{in}(t), \phi_u^{in}(t))$, then: 
\begin{itemize}
\item the components are monotonic functions of $t$ and
\item $\phi_s^{in}(t)$ and $ \phi_u^{in}(t)$ are bounded.
\end{itemize}
Similarly, we define a segment in $\Sigma_O^{out}$.
\end{definition}

\bigbreak

\begin{definition}
Let $a \in \RR$, $D$ be a disc centered at $p\in\RR^2$.
A \emph{spiral} on $D$ around the point $p$ is a smooth curve
$\alpha :[a, +\infty[ \rightarrow D,$ satisfying $ \lim_{s\to +\infty }\alpha (s)=p$ and such that if
$\alpha (s)=(r(s),\theta(s))$ is its expression in polar coordinates around $p$ then:
\begin{enumerate}
\item the map $r$ is bounded by two monotonically decreasing maps converging to zero as $s \rightarrow +\infty$;
\item the map $\theta$ is monotonic for some unbounded subinterval of $[a,+\infty[$ and
\item $\lim_{s\to +\infty}|\theta(s)|=+\infty$.
\end{enumerate}
\end{definition}

\bigbreak
The notion of spiral may be naturally extended to any set diffeomorphic to a disk. 
\bigbreak
\begin{definition}
A helix $H\subset\Sigma_O^{in}$ accumulating on $W_{loc}^s(O)$ is a curve (parametrized by $t$)
$$
H: [0,1] \rightarrow \Sigma_O^{in}
$$
without self-intersections such that if $H(t)= (\phi_s^{in}(t), r_u^{in}(t), \phi_u^{in}(t))$, then: 
\begin{itemize}
\item the components are quasi-monotonic functions of $t$;
\item $\lim_{t \rightarrow 0^+} |\phi_s^{in}(t)| = \lim_{t \rightarrow 0^+} |\phi_u^{in}(t)| = +\infty$ and $\lim_{t \rightarrow 0^+} r_u^{in}(t)=0$.
\end{itemize}
Similarly, we define a  helix in $\Sigma_O^{out}$ accumulating on $W^u_{loc}(O)$. 
\end{definition}

\bigbreak

\begin{definition}
\label{ss_def}
A two-dimensional manifold  $\mathcal{H}$ embedded in $\mathbb{R}^3$ is called a \emph{spiralling sheet} accumulating on a curve $\mathcal{C}$ if there exist a spiral $S$ around $(0,0)$, a neighbourhood $V\subset \mathbb{R}^3$ of $\mathcal{C}$, a neighbourhood $W_0 \subset \mathbb{R}^2$ of the origin, a non-trivial closed interval $I$ and a diffeomorphism $\eta: V\rightarrow I \times W_0$ such that:
$$\eta(\mathcal{H}\cap V)=I \times (S \cap W_0) \qquad \text{and} \qquad \gamma=\eta^{-1}(I\times \{0\}).$$
\end{definition}
\bigbreak
The curve $\mathcal{C}$ can be called the \emph{basis} of the spiralling sheet. According to Definition \ref{ss_def}, up to a diffeomorphism, we may think on a \emph{spiralling sheet} accumulating on a curve as the cartesian product of a spiral and a curve.  In the present paper, the curve $\mathcal{C}$ lies on the invariant manifolds of $O$.
Each transverse cross section to $\mathcal{C}$ intersects the spiralling sheet into a spiral. Note also that the diffeomorphic image of a spiralling sheet is again a spiralling sheet.
\bigbreak
\begin{definition}
\label{scroll_def}
Given two spiralling sheets $\mathcal{H}_1$ and $\mathcal{H}_2$ accumulating on the same curve $\mathcal{C}\subset\mathbb{R}^3$, any region limited by $\mathcal{H}_1$ and $\mathcal{H}_2$ inside a tubular neighbourhood of $\mathcal{C}$ is said a \emph{scroll} accumulating on $\mathcal{C}$.
\end{definition}
\bigbreak

\subsection{Local geometry}
\label{firstreturn}
The following result will be essential  in the sequel. It gives a general characterization of the geometry near the bifocus. See Figure \ref{local geometry}.
\begin{proposition}[\cite{Hart, IbRo}, adapted]
\label{My_Prop5}
For $\xi>0$ arbitrarily small, let $\Xi: D\subset \mathbb{R}^2 \rightarrow \RR$ be a $C^1$ map defined on the disk
$$D=\{(u,v)\in\mathbb{R}^2 : 0\leq u^2+v^2 \leq \xi <1\}$$ and let
$$
\mathcal{F}^{in}=\{(\phi_s^{in}, r_u^{in}, \phi_u^{in}) \in \Sigma^{in}_O : \phi_s^{in}=\Xi(r_u^{in} \cos \phi_u^{in},r_u^{in} \sin \phi_u^{in}),\,0\leq r_u^{in}\leq \xi,\,0\leq \phi_u^{in} <2\pi\}
$$
and
$$
\mathcal{F}^{out}=\{(r_s^{out},\phi_s^{out}, \phi_u^{out}) \in \Sigma^{out}_O : \phi_u^{out}=\Xi(r_s^{out} \cos \phi_u^{out},r_s^{out} \sin \phi_u^{out}),\,0\leq r_s^{out}\leq \xi,\,0\leq \phi_s^{out} <2\pi\}.
$$
Then the following assertions are valid:
\medbreak
\begin{enumerate}
\item Any segment in $\mathcal{F}^{in}\backslash W^s_{loc}(O)$ is mapped by $\Pi_O$ into a helix accumulating on $W^u_{loc}(O)$.
\medbreak
\item The set $\Pi_{O}(\mathcal{F}^{in}\backslash W^s_{loc}(O))$ is a spiralling sheet accumulating on $W^u_{loc}(O) \cap \Sigma^{out}_O$. 

\medbreak
\item The set $\Pi_{O}^{-1}(\mathcal{F}^{out}\backslash W^u_{loc}(O))$ is a spiralling sheet accumulating on $W^s_{loc}(O) \cap \Sigma^{in}_O$. \medbreak
\item The set $S$  introduced in \S\ref{global transition} is a scroll contained in $\Sigma_O^{in}$ accumulating on $W^{s}_{loc} (O)\cap \Sigma_O^{in}$. 
\medbreak
\end{enumerate}
 \end{proposition}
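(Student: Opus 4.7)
The plan is to extract everything from the explicit normal-form formulas \eqref{local_flow_eq} for $\Pi_O$ and \eqref{inverse_local_flow_eq} for $\Pi_O^{-1}$, exploiting the fact that the two angular components acquire a logarithmic term $-\tfrac{\omega_i}{\alpha_j}\ln r$ which diverges as the radial coordinate $r$ tends to zero, while the remaining radial coordinate is simply raised to a positive power. This alone is what produces the spiralling behaviour.

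First I would prove (1). Fix a segment $s:[0,1]\to \mathcal{F}^{in}\setminus W^s_{\loc}(O)$ with $s(0)\in W^s_{\loc}(O)$, so that $r_u^{in}(0)=0$ and the three components of $s$ are monotonic in $t$ with $\phi_s^{in},\phi_u^{in}$ bounded. Composing with \eqref{local_flow_eq} yields a curve in $\Sigma_O^{out}$ whose components are
$$r_s^{out}(t)=\bigl(r_u^{in}(t)\bigr)^{\delta},\quad \phi_s^{out}(t)=\phi_s^{in}(t)-\tfrac{\omega_1}{\alpha_2}\ln r_u^{in}(t),\quad \phi_u^{out}(t)=\phi_u^{in}(t)-\tfrac{\omega_2}{\alpha_2}\ln r_u^{in}(t).$$
Since $r_u^{in}\to 0^+$ monotonically as $t\to 0^+$, one has $r_s^{out}\to 0$ monotonically while $|\phi_s^{out}|,|\phi_u^{out}|\to +\infty$ monotonically for small $t$; this is exactly the definition of a helix in $\Sigma_O^{out}$ accumulating on $W^u_{\loc}(O)\cap \Sigma_O^{out}=\{r_s^{out}=0\}$.

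For (2) I would parametrise $\mathcal{F}^{in}$ by $(u,v)=(r_u^{in}\cos\phi_u^{in},r_u^{in}\sin\phi_u^{in})\in D$, so that the surface is the graph $\phi_s^{in}=\Xi(u,v)$. Each radial ray $\{\phi_u^{in}=\mathrm{const}\}\cap \mathcal{F}^{in}$ is a segment, whose image under $\Pi_O$ is a helix by (1). To verify Definition \ref{ss_def}, I would work near $W^u_{\loc}(O)\cap \Sigma_O^{out}$ using $\phi_s^{out}$ as the parameter along the basis curve and $(r_s^{out},\phi_u^{out})$ as transverse polar coordinates: a transverse slice $\{\phi_s^{out}=\mathrm{const}\}$ intersects $\Pi_O(\mathcal{F}^{in}\setminus W^s_{\loc}(O))$ in a curve which, after substituting $r_u^{in}=(r_s^{out})^{1/\delta}$, becomes $\phi_u^{out}-\phi_s^{out}=\tfrac{\omega_2-\omega_1}{\alpha_2 \delta}\ln r_s^{out}+\Xi(\cdot)-\phi_u^{in}(\cdot)$, i.e.\ a curve whose radial coordinate decays monotonically to zero while its angle winds indefinitely; this is a spiral. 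Part (3) follows verbatim from the symmetric analysis of $\Pi_O^{-1}$ via \eqref{inverse_local_flow_eq}.

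Finally, for (4), note from \eqref{ciout} that $C^{out}$ is a solid box cut off in the $\phi_u^{out}$ variable by the two faces $\{\phi_u^{out}=\pm \varepsilon^{out}\}$, which are themselves sets of the type $\mathcal{F}^{out}$ with $\Xi\equiv\pm\varepsilon^{out}$. By (3), their preimages under $\Pi_O^{-1}$ are two spiralling sheets accumulating on the same curve $W^s_{\loc}(O)\cap \Sigma_O^{in}$; the set $S=\Pi_O^{-1}(C^{out}\setminus W^u_{\loc}(O))$ is the region enclosed between these two sheets, hence a scroll in the sense of Definition \ref{scroll_def}. The main obstacle is the verification for (2) and (3): one must check that the explicit parametrisation above actually produces the diffeomorphism $\eta$ required by Definition \ref{ss_def}, i.e.\ that the ``radial'' and ``angular'' roles in $\Sigma_O^{out}$ can be genuinely disentangled despite the coupling through $\ln r_u^{in}$, and that the $C^1$ bound on $\Xi$ is preserved throughout.
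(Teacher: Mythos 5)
The paper does not actually supply a proof of Proposition~\ref{My_Prop5} (it is cited from \cite{Hart, IbRo}), so I can only assess internal correctness. Your overall strategy --- reading (1) directly off the explicit formula \eqref{local_flow_eq}, upgrading segments to sheets by fibering $\mathcal{F}^{in}$ into radial rays, deducing (3) by symmetry from \eqref{inverse_local_flow_eq}, and sandwiching $S$ between the two preimages of the bounding faces of $C^{out}$ for (4) --- is the right one, and (1), (3) and (4) are essentially correct (the same argument for (4) is used by the paper itself in \S\ref{suspended horseshoes}).

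However, there is a genuine error in your verification of (2). You propose to treat $\phi_s^{out}$ as the parameter along the basis curve $W^u_{\loc}(O)\cap\Sigma_O^{out}$ and $(r_s^{out},\phi_u^{out})$ as the transverse polar coordinates. This is backwards. By \eqref{local u O}, the basis curve is $\{r_s^{out}=0\}$, which is the core circle of the solid torus $\Sigma_O^{out}$ and is parametrized by $\phi_u^{out}$; on it $\phi_s^{out}$ is undefined (it is the polar angle of the degenerate origin). The transverse slices required by Definition~\ref{ss_def} are the meridian disks $\{\phi_u^{out}=\mathrm{const}\}$, on which $(r_s^{out},\phi_s^{out})$ are the genuine polar coordinates centered at a point of the basis curve. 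The slice $\{\phi_s^{out}=\mathrm{const}\}$ that you take is an annulus that \emph{contains} the whole basis curve rather than meeting it at a point, and $(r_s^{out},\phi_u^{out})$ are not polar coordinates of a disk, so the curve you exhibit is not a spiral in the sense of the paper's definition. The fix is straightforward: fix $\phi_u^{out}=c$, solve $\phi_u^{in}-\tfrac{\omega_2}{\alpha_2}\ln r_u^{in}\equiv c \pmod{2\pi}$ for $\phi_u^{in}$ in terms of $r_u^{in}$, and observe that along this curve $r_s^{out}=(r_u^{in})^{\delta}\to 0$ monotonically while $\phi_s^{out}=\Xi(\cdot)-\tfrac{\omega_1}{\alpha_2}\ln r_u^{in}\to+\infty$ since $\Xi$ is bounded on $D$; this is exactly a spiral in the disk $\{\phi_u^{out}=c\}$. (Incidentally, the identity you display for $\phi_u^{out}-\phi_s^{out}$ also has a sign error --- the coefficient of $\ln r_s^{out}$ should be $-\tfrac{\omega_2-\omega_1}{\alpha_2\delta}$ and the constant part should be $\phi_u^{in}-\Xi(\cdot)$ --- but this is moot once the slicing is corrected.) The analogous swap should be made in (3), where the basis curve $W^s_{\loc}(O)\cap\Sigma_O^{in}=\{r_u^{in}=0\}$ is parametrized by $\phi_s^{in}$ and the transverse polar pair is $(r_u^{in},\phi_u^{in})$.
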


\begin{figure}[ht]
\begin{center}
\includegraphics[height=5.0cm]{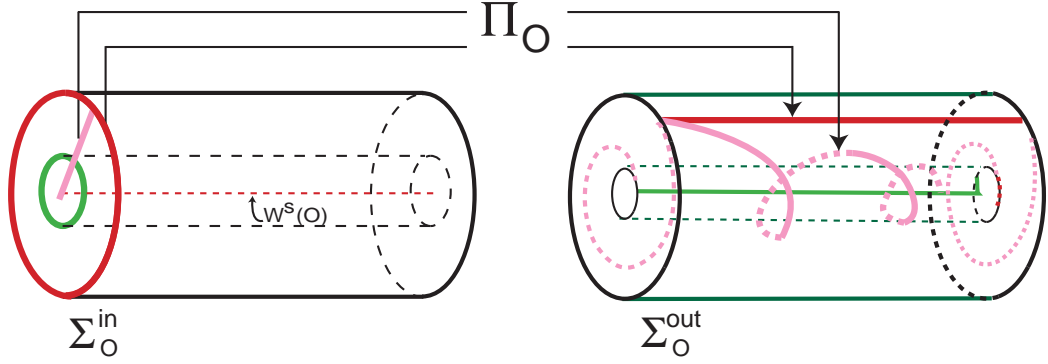}
\end{center}
\caption{\small Any segment in $\mathcal{F}^{in}\backslash W^s_{loc}(O)$ is mapped by $\Pi_O$ into a helix accumulating on $W^u_{loc}(O)$. Any closed curve in $\Sigma_O^{in}$ is mapped by $\Pi_O$ into a closed curve in $\Sigma_O^{out}$. The red curve in this Figure is supposed to be the same as the red curve of Figure \ref{torus}.}
\label{local geometry}
\end{figure}

\section{Three-dimensional whiskered horseshoes revisited}
\label{horseshoes}
The existence of a homoclinic cycle $\Gamma$ is considered as a mechanism to create three-dimensional chaos in the spirit of Shilnikov \cite{Shilnikov65, Shilnikov67A} and Lerman \cite{Lerman2000}.  In this section, we recall the main steps of the construction of the invariant horseshoe given in \cite{IbRo}, adapted to our purposes. We address the reader to \cite{Wiggins} for more details in the definitions.
\subsection{Geometric preliminaries for the construction}
\label{slices and slabs}
Let $D\subset \RR^3$ be a compact and connected 3-dimensional set of $\RR^3$. 
Define $D_X$ and $D_y$ the projection of $D$ onto $\RR^2$ and $\RR$ as:
$$D_X=\{X\in \RR^2: \text{ for which there exists } y\in \RR \text{ with } (X, y)\in D\}\subset \RR^2$$
and
$$D_y=\{y\in \RR: \text{ for which there exists } x\in \RR^2 \text{ with } (X, y)\in D\}\subset \RR. $$
In our case, as illustrated in Figure \ref{box},  $D_X$ is a closed and connected two-dimensional square contained in $\RR^2$ and $D_y$ is a bounded interval of $\RR$. 

\bigbreak

\begin{definition}
A $\mu_h$-horizontal slice $\mathcal{H}$ is defined to be the graph of a function $h: D_X \rightarrow \RR$ satisfying:
\medbreak
\begin{itemize}
\item $\mathcal{H}= \{(X, h(X))\in \RR^3: X \in D_X\} \subset D$ and
\medbreak
\item for all $ X_1, X_2 \in D_X$, there exists $\mu_h\in \RR^+_0$ such that  $|h(X_1)-h(X_2)|\leq \mu_h \|X_1-X_2\|$.
\end{itemize}
\bigbreak
A $\mu_v$-vertical slice $\mathcal{V}$ is defined to be the graph of a function $v: D_y \rightarrow \RR^2$ satisfying:
\begin{itemize}
\medbreak
\item $\mathcal{H}= \{(v(y), y)\in \RR^3: y \in I_y\} \subset D$ and
\medbreak
\item for all $ y_1, y_2 \in D_y$, there exists $\mu_v\in \RR^+_0$ such that  $\|v(y_1)-v(y_2)\|\leq \mu_v |y_1-y_2|$.
\end{itemize}
\end{definition}
\bigbreak

Let $\mu_h>0$ fixed, let $H\subset D$ be a $\mu_h$-horizontal slice and let $D_y\subset D $ be an interval intersecting $H$ at one point. 
Now, consider the following set:
$$
S_H = \{ (X,y) \in \RR^2\times \RR: X \in D_X \text{ and } y \text{ has the Property (P) }\}
$$
where:
\medbreak
 \textbf{Property (P): } for each $X\in D_X$, given any line $L$ through $(X,y)$ with L parallel to the plane $X=0$, then $L$ intersects the points $(X, h_\alpha (X))$ and $(X, h_\beta (X))$ for some $\alpha, \beta \in D_y$ with $(X, y)$ between these two points along $L$. 
\medbreak
\begin{definition}
A $\mu_h$\emph{-horizontal slab} is defined to be the topological closure of $S_H$. See Figure \ref{box}.
 \end{definition}
 
  \begin{figure}
\begin{center}
\includegraphics[height=6.9cm]{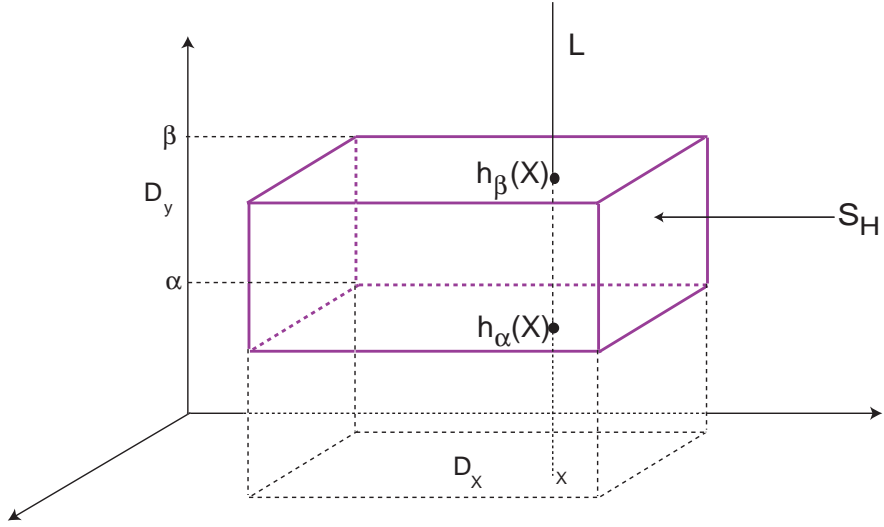}
\end{center}
\caption{\small Illustration of the sets $D_X$, $D_y$, $\overline{S_H}$ (horizontal slab) and Property (P).}
\label{box}
\end{figure}

\bigbreak
The \emph{vertical boundary} of a $\mu_h$-horizontal slab $S_H$, denoted by $\partial_v S_H$, is defined as 
$$\partial_v S_H = \{(X,y)\in S_H: X\in \partial D_X\}.$$

The horizontal boundary of a $\mu_h$-horizontal slab $S_H$, denoted by $\partial_h S_H$, is defined by $$\partial_h H = \partial S_H - \partial_v S_H.$$ Vertical slabs and their boundaries may be defined similarly. 

\begin{definition}
Let $S_H^1$ and $S_H^2$ be two $\mu_h$-horizontal slabs. We say that $S_H^1$ intersects $S_H^2$ \emph{fully} if $S_H^1\subset S_H^2$ and  $\partial_v S_H^1\subset \partial_v S_H^2$.
\end{definition}

\begin{definition}
The width of a $\mu_h$-horizontal slab $S_H$, denoted by $d(S_H)$, is defined as:
\begin{equation}
\label{dist}
d(S_H)= \sup_{X\in D_X, \alpha, \beta \in I} |h_\alpha (X)-h_\beta(X)|
\end{equation}
Similarly, we define the width of a $\mu_v$-vertical slab. More details in \S 2.3 of \cite{Wiggins}.
\end{definition}

\subsection{The construction}
\label{suspended horseshoes}
In this section we focus our attention on the dynamics of 
\begin{equation}
\label{def_S}
S=\Pi_{O}^{-1}(C^{out}\setminus W_{loc}^{u}(O)) \subset C^{in}\subset \Sigma_{O}^{in}
\end{equation}
  defined in Subsection \ref{global transition}. To simplify the readers' task we revisit few results, including their proofs, whose arguments will be required in the sequel.
We also discuss the existence of invariant sets in $S$, accumulating on $W_{loc}^s(O) \cap \Sigma_O^{in}$, where $R_0$ is topologically conjugate to a shift under (at least) two symbols. The construction is based on the generalized Conley-Moser conditions \cite{Koon, Wiggins}, which provide sufficient conditions for the existence of invariant sets where the dynamics is conjugated to a full shift. 
\bigbreak

Let $C^{out}\subset \Sigma_O^{out}$ be the solid cylinder of radius $c^{out}$ given by (\ref{ciout}). Given $\eta\in [0,2\pi]$ and $\varepsilon^{out}>0$, for each $N \in \mathbb{N}$  such that $N \geq -\frac{1}{2\pi}\left(\frac{\omega_2}{\alpha_2}\ln(c^{out})+\eta\right),$ we define the hollow cylinder
\begin{eqnarray*}
M_{N}^{out}=\left\{ (r_s^{out}, \phi_s^{out}, \phi_u^{out}):
r_s^{out}\in [a_{N+1},a_N],
\phi_s^{out} \in [0,2\pi], \phi_u^{out} \in [-\varepsilon^{out},\varepsilon^{out}] \right\} \subset C^{out},
\end{eqnarray*}
where
\begin{equation}
\label{a_n}
a_{N}=\exp\left(\frac{-\alpha_1(\eta+2\pi N)}{\omega_2}\right).
\end{equation}
As depicted in Figure \ref{annulus}, the border of $M_{N}^{out}$, denoted by $\partial M_{N}^{out}$, can be written as
$$
\partial M_{N}^{out} = E_N^{L} \cup E_N^{R}  \cup T_N^{I} \cup T_N^{O},
$$
(the letters $L$, $R$, $I$, $O$ mean Left, Right, Inner and Outer, respectively) with: 
\begin{eqnarray*}
E_N^{L}&=&\left\{ (r_s^{out}, \phi_s^{out}, \phi_u^{out}):
r_s^{out}\in [a_{N+1},a_N],
\phi_s^{out} \in [0,2\pi], \phi_u^{out} =-\varepsilon^{out} \right\},
\\
E_N^{R}&=&\left\{ (r_s^{out}, \phi_s^{out}, \phi_u^{out}):
r_s^{out}\in [a_{N+1},a_N],
\phi_s^{out} \in [0,2\pi], \phi_u^{out}= \varepsilon^{out} \right\},
\\
T_N^{I}&=&\left\{ (r_s^{out}, \phi_s^{out}, \phi_u^{out}):
r_s^{out}=a_{N+1},
\phi_s^{out} \in [0,2\pi], \phi_u^{out} \in [-\varepsilon^{out},\varepsilon^{out}] \right\} \quad \text{and }
\\
T_N^{O}&=&\left\{ (r_s^{out}, \phi_s^{out}, \phi_u^{out}):
r_s^{out}=a_N,
\phi_s^{out} \in [0,2\pi], \phi_u^{out} \in [-\varepsilon^{out},\varepsilon^{out}] \right\}.
\end{eqnarray*}

  \begin{figure}[ht]
\begin{center}
\includegraphics[height=4.9cm]{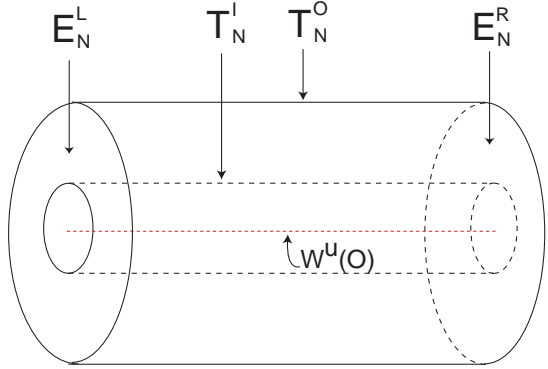}
\end{center}
\caption{\small  The border of $M_{N}^{out}$, denoted by $\partial M_{N}^{out}$, can be written as
$\partial M_{N}^{out} = E_N^{L} \cup E_N^{R}  \cup T_N^{I} \cup T_N^{O}$.}
\label{annulus}
\end{figure}

\bigbreak

According to the definitions given in Subsection \ref{slices and slabs}, it is easy to check the set $M_{N}^{out}$ is a horizontal slab across $\Sigma_O^{out}$. The vertical (resp. horizontal) boundaries of $M_{N}^{out}$ are defined by $E_N^L$ and $E_N^R$ (resp. $T_N^I$ and $T_N^O$). The surfaces $T_N^I$ and $T_N^O$ may be defined as graphs of functions $r_u^{in}=h(\phi_s^{in},\phi_u^{in})$ where $h$ is approximately a constant map.  Define now:
\begin{equation}
\label{sn}
 \mathcal{S}_{N}=\Pi_O^{-1}(M_{N}^{out})\subset \Sigma_O^{in}.
 \end{equation}
  Using the coordinates of the different components of $\partial M_{N}^{out}$, the authors of \cite{IbRo} proved that: 

\begin{eqnarray*}
\Pi^{-1}_O(E_N^{L})= & \{ (\phi_s^{in}, r_u^{in},  \phi_u^{in}):
&       r_u^{in}=\exp\left( \frac{\alpha_2 (\phi_u^{in}+\varepsilon^{out})}{\omega_2}\right),
\\
& &     \phi_u^{in} \in [-\varepsilon^{out}-\eta-2\pi (N+1),-\varepsilon^{out}-\eta-2\pi N],
\\
& &     \phi_s^{in} \in [0, 2\pi] \} \subset \Sigma_O^{in}\},
\\
\\
\Pi^{-1}_O(E_N^{R})= &  \{ (\phi_s^{in}, r_u^{in},  \phi_u^{in}):
&       r_u^{in}=\exp\left( \frac{\alpha_2 (\phi_u^{in}-\varepsilon^{out})}{\omega_2}\right),
\\
& &     \phi_u^{in} \in [\varepsilon^{out}-\eta-2\pi (N+1),\varepsilon^{out}-\eta-2\pi N],
\\
& &     \phi_s^{in} \in [0, 2\pi] \} \subset \Sigma_O^{in}\},
\\
\\
\Pi^{-1}_O(T_N^{I})= &  \{ (\phi_s^{in}, r_u^{in},  \phi_u^{in}):
&       r_u^{in}=\exp\left( -\frac{\alpha_2 (\gamma+2\pi(N+1))}{\omega_2}\right),
\\
& &     \phi_u^{in} \in [-\varepsilon^{out}-\eta-2\pi (N+1),\varepsilon^{out}-\eta-2\pi (N+1)],
\\
& &     \phi_s^{in} \in [0, 2\pi] \} \subset \Sigma_O^{in}\} \qquad \text{and}
\\
\\
\Pi^{-1}_O(T_N^{O})= &  \{ (\phi_s^{in}, r_u^{in},  \phi_u^{in}):
&       r_u^{in}=\exp\left( -\frac{\alpha_2 (\gamma+2\pi N)}{\omega_2}\right),
\\
& &     \phi_u^{in} \in [-\varepsilon^{out}-\eta-2\pi N,\varepsilon^{out}-\eta-2\pi N],
\\
& &     \phi_s^{in} \in [0, 2\pi] \} \subset \Sigma_O^{in}\}.
\end{eqnarray*}

\bigbreak
By Proposition \ref{My_Prop5}, the set $S=\Pi_O^{-1}(C^{out})\subset \Sigma_O^{in}$ is a scroll accumulating on $W^s_{loc}(O)\cap \Sigma_O^{in}$ because each of the two disks 
\begin{eqnarray*}
\left\{ (r_s^{out}, \phi_s^{out}, \phi_u^{out}):
r_s^{out}\in [0, c^{out}],\,\,
\phi_s^{out} \in [0,2\pi],\,\, \phi_u^{out} =\pm \varepsilon^{out} \right\} 
\end{eqnarray*}
limiting $C^{out}$ is sent by $\Pi_O^{-1}$ into a spiralling sheet accumulating on $W^s_{loc}(O)\cap \Sigma_O^{in}$. Therefore $S$ is limited by two spiralling sheets accumulating on $W^s_{loc}(O)\cap \Sigma_O^{in}$.

  \begin{figure}[ht]
\begin{center}
\includegraphics[height=4.1cm]{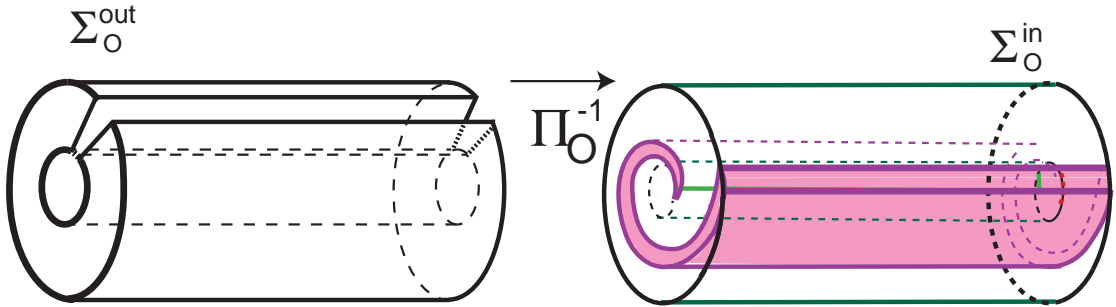}
\end{center}
\caption{\small Each of the two disks limiting $M_N^{out~}\subset C^{out}$ is sent by $\Pi_O^{-1}$ into a spiralling sheet accumulating on $W^s_{loc}(O)\cap \Sigma_O^{in}$.}
\label{local_map2}
\end{figure}

\bigbreak
The family of sets $\mathcal{S}_{N}$ (see \eqref{sn}) provides an infinite collection of pieces inside the scroll $S$ accumulating on $W^s_{loc}(O)\cap \Sigma_O^{in}$.
Note that $\mathcal{S}_{N}$ is limited by two tori contained in $\Sigma_O^{in}$. More precisely
$$
\mathcal{S}_{N} \subset \left\{ (\phi_s^{in},r_u^{in},\phi_u^{in}) \in \Sigma_O^{in}:
b_{N+1}\leq r_u^{in}\leq b_N \right\} \quad \text{where} \quad b_{N}=\exp\left( -\frac{\alpha_2 (\eta+2\pi N)}{\omega_2}\right).
$$

For $i, j \in \NN$ large enough, let us consider the set $V_{ij}:=R_{0}(\mathcal{S}_{i})\cap\mathcal{S}_{j}$, where $R_0$ is the return map introduced in \S\ref{global transition}. To understand the shape of such intersection we first must notice that for $n\in \NN$ sufficiently large, we have 
$b_N < \varepsilon^{out}$  and:
\begin{lemma} 
\label{tec2}
Under conditions (P1)--(P4), there exists $N_0\in \NN$ such that $b_{N+1}>a_{N}$ for all $N>N_0$.
\end{lemma}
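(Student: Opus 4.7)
The proof is a direct algebraic manipulation of the explicit expressions for $a_N$ and $b_N$, using only the hypothesis $0 < \alpha_2 < \alpha_1$ from \textbf{(P2)}. The content of the lemma is essentially a quantitative form of the statement ``$a_N$ decays strictly faster than $b_N$'', which reflects the volume-contracting property $\delta = \alpha_1/\alpha_2 > 1$ already noted in \S\ref{local near O}.

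First I would substitute the definitions and take natural logarithms. Since the exponential is monotone and $\omega_2 > 0$, the inequality $b_{N+1} > a_N$ is equivalent to
$$
-\alpha_2\bigl(\eta + 2\pi(N+1)\bigr) \;>\; -\alpha_1\bigl(\eta + 2\pi N\bigr),
$$
and, rearranging,
$$
(\alpha_1-\alpha_2)\bigl(\eta + 2\pi N\bigr) \;>\; 2\pi\alpha_2.
$$
By \textbf{(P2)} we have $\alpha_1-\alpha_2 > 0$, so the left-hand side is a strictly increasing affine function of $N$ that tends to $+\infty$. Hence it suffices to choose
$$
N_0 \;=\; \left\lceil \frac{\alpha_2}{\alpha_1-\alpha_2} \,-\, \frac{\eta}{2\pi} \right\rceil_+,
$$
where $\lceil \cdot\rceil_+$ denotes the positive part of the ceiling, to guarantee that the inequality holds for every $N > N_0$.

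There is no real obstacle: the only hypothesis used is $\alpha_1 > \alpha_2$, and the argument is a one-line computation. It is nevertheless worth stressing in the write-up that if the saddle-value were reversed ($\alpha_1 \le \alpha_2$), the conclusion would fail, which is precisely why the dissipativity condition $\delta>1$ is imposed at this step of the horseshoe construction.
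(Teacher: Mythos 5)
Your proof is correct and takes essentially the same approach as the paper: both reduce $b_{N+1}>a_N$ to the inequality $\alpha_1(\eta+2\pi N)>\alpha_2(\eta+2\pi(N+1))$ and exploit $\alpha_1>\alpha_2$ (the paper phrases this via the decreasing ratio $\xi_N=\tfrac{\eta+2\pi(N+1)}{\eta+2\pi N}\to 1<\alpha_1/\alpha_2$, while you isolate $N$ directly and give an explicit $N_0$, but these are the same elementary calculation).
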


\begin{proof}
In this proof we make use of the fact that $\alpha_2<\alpha_1$ ($\Leftrightarrow \delta>1$).  Defining the sequence:
$$
\xi_N= \frac{\eta +2\pi(N+1)}{\eta +2\pi N} , \qquad N\in \NN,
$$
it is easy to see that  $(\xi_N)_N$ is decreasing and $\lim_{N\rightarrow +\infty} \xi_N = 1$. Therefore, there exists $N_0\in \NN$ such that:
$$
\forall N>N_0, \qquad  \frac{\eta +2\pi(N+1)}{\eta +2\pi N} < \frac{\alpha_1}{\alpha_2},
$$
which is equivalent to the existence of $N_0\in \NN$ such that:
$$ \forall N>N_0, \qquad \alpha_1(\eta + 2\pi N)> \alpha_2(\eta + 2\pi (N+1)).$$
Multiplying both sides of the prevous inequality by $-1$ and composing with the exponential map, we conclude that there exists $N_0\in \NN$ such that 
$$
\forall N>N_0, \qquad\exp \left(-\frac{\alpha_1}{\omega_2}(\eta + 2\pi N) \right)  < \exp \left(-\frac{\alpha_2}{\omega_2}(\eta + 2\pi (N+1)) \right),  
$$
completing the proof. 
\end{proof}
\bigskip

The geometrical interpretation of Lemma \ref{tec2} is stressed in Figure \ref{hyperchaos}. 
\bigbreak
Let $V_{ji}=R_{0}(S_{i})\cap S_{j}$. From the above estimations, for all $i,j\in \mathbb{N}$ large enough, we get $V_{ij} \neq \varnothing$. Each $V_{ij}$ consists of two connected components $V_{ij}^{k}$, with $k=1,2$ -- see Figure \ref{hyperchaos}. These sets are fully intersecting vertical slabs across $S_{j}$. Defining $H_{ij}^k=R_{0}^{-1}(V_{ij}^{k})$, the next lemma claims that $H_{ij}^k$ builds a fully intersecting horizontal slab across $S_{N}$.

\begin{figure}[ht]
\begin{center}
\includegraphics[height=12.0cm]{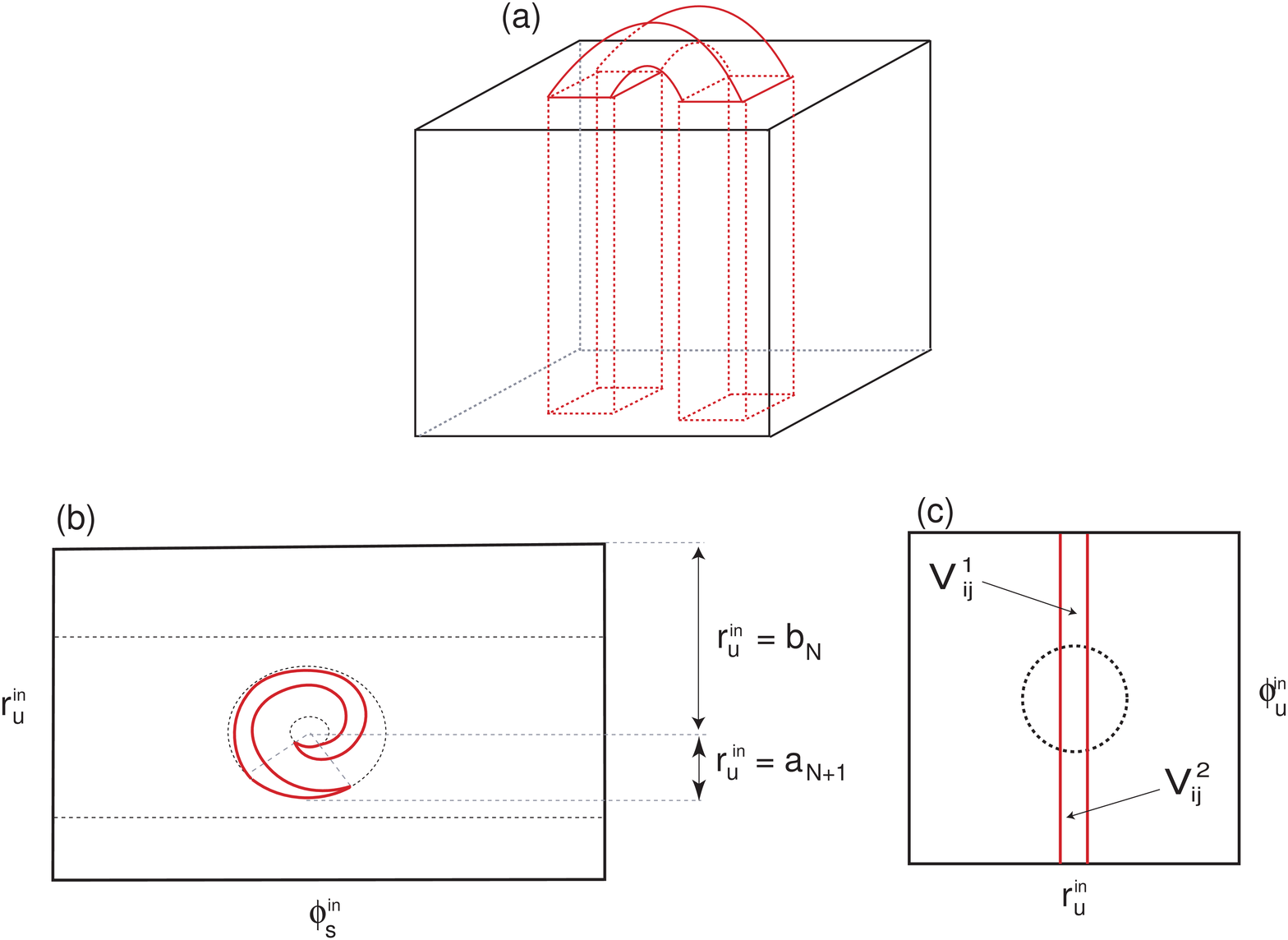}
\end{center}
\caption{\small The horseshoe with two contracting and one expanding directions. (a): global view, (b): upper view and (c): side view.}
\label{hyperchaos}
\end{figure}

\begin{lemma} [Slab Condition \cite{IbRo}]
\label{lemma12} For each $k=1,2$ and $i,j\in \mathbb{N}$ large enough, the set $H_{ij}^k$
is a fully intersecting horizontal slab in $S_{i}$.
\end{lemma}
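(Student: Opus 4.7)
Since $R_{0}=\Psi_{1}\circ\Pi_{O}$ is a $C^{r}$ diffeomorphism on $\mathcal{S}_{i}\setminus W^{s}_{\loc}(O)$, the set $H_{ij}^{k}=R_{0}^{-1}(V_{ij}^{k})$ inherits its topological structure from $V_{ij}^{k}$ by pullback. The strategy is to decompose $\partial V_{ij}^{k}$ into its horizontal and vertical pieces according to the slab structure of $\mathcal{S}_{j}$, pull them back through $R_{0}^{-1}$, and verify that the resulting boundary pieces of $H_{ij}^{k}$ satisfy the defining properties of a $\mu_{h}$-horizontal slab fully intersecting $\mathcal{S}_{i}$.

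\textbf{Boundary tracking.} Since $V_{ij}^{k}$ is a vertical slab fully intersecting $\mathcal{S}_{j}$, its horizontal boundary lies on $\partial_{h}\mathcal{S}_{j}=\Pi_{O}^{-1}(T_{j}^{I}\cup T_{j}^{O})$, that is, on the bounding tori $\{r_{u}^{in}=b_{j+1}\}$ and $\{r_{u}^{in}=b_{j}\}$. Writing $V_{ij}^{k}$ as the $k$-th connected component of $R_{0}(\mathcal{S}_{i})\cap\mathcal{S}_{j}=\Psi_{1}(M_{i}^{out})\cap\mathcal{S}_{j}$, the remainder of $\partial V_{ij}^{k}$ --- its vertical boundary --- lies on $\partial\Psi_{1}(M_{i}^{out})$; the horseshoe geometry of Figure \ref{hyperchaos} ensures that the images $\Psi_{1}(T_{i}^{I})$ and $\Psi_{1}(T_{i}^{O})$ fall outside $\mathcal{S}_{j}$ and so do not contribute to $\partial V_{ij}^{k}$. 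Hence the vertical boundary of $V_{ij}^{k}$ is contained in $\Psi_{1}(E_{i}^{L}\cup E_{i}^{R})=R_{0}(\partial_{v}\mathcal{S}_{i})$, and applying $R_{0}^{-1}$ yields
\[
R_{0}^{-1}(\partial_{v}V_{ij}^{k})\subset\Pi_{O}^{-1}(E_{i}^{L}\cup E_{i}^{R})=\partial_{v}\mathcal{S}_{i},
\]
which is precisely the ``fully intersecting'' condition for $H_{ij}^{k}$ inside $\mathcal{S}_{i}$. The remaining pieces $R_{0}^{-1}(\partial_{h}V_{ij}^{k})$ furnish the candidates for the horizontal boundary of $H_{ij}^{k}$.

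\textbf{Graph property and main obstacle.} It remains to check that each component of $R_{0}^{-1}(\partial_{h}V_{ij}^{k})$ is a $\mu_{h}$-horizontal slice in $\mathcal{S}_{i}$, namely the graph of a Lipschitz function $r_{u}^{in}=h(\phi_{s}^{in},\phi_{u}^{in})$ over the two-dimensional base. Using the explicit form of $R_{0}$ in \eqref{global_transition1}, each equation $\sqrt{X^{2}+Y^{2}}=b_{j}$ (defining one of the bounding tori of $\mathcal{S}_{j}$) pulls back, via the model expressions of $\Pi_{O}$ and $\Psi_{1}$, to an implicit relation of the form
\[
(r_{u}^{in})^{\delta}\,\bigl(1+O(\text{osc.})\bigr)=b_{j},
\]
whose leading $r_{u}^{in}$-derivative $\delta(r_{u}^{in})^{\delta-1}$ is strictly positive and, for $r_{u}^{in}\in[b_{i+1},b_{i}]$ with $i$ large, dominates the oscillatory contributions in $(\phi_{s}^{in},\phi_{u}^{in})$ produced by the logarithmic twist terms in $\Pi_{O}$. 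The Implicit Function Theorem then represents $r_{u}^{in}$ as a $C^{1}$ function of $(\phi_{s}^{in},\phi_{u}^{in})$, with a Lipschitz constant $\mu_{h}$ controlled by Jacobian bounds on $R_{0}$. The main difficulty lies in this uniform control: the twist in $\Pi_{O}$ contributes a shear of order $1/r_{u}^{in}$, so one must choose $i,j$ large enough --- in the spirit of Lemma \ref{tec2} --- so that on the thin hollow cylinder $M_{i}^{out}$ the monotone $r_{u}^{in}$-stretching dominates the transverse shear and the graph estimate closes, producing the required horizontal slab structure on $H_{ij}^{k}$.
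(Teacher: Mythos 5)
The paper itself does not prove this lemma: it is cited from \cite{IbRo}, and the paragraph following the statement merely records the boundary correspondences (horizontal boundaries of $H_{ij}^k$ map to horizontal boundaries of $V_{ij}^k$, vertical boundaries pass through horizontal boundaries of $\Pi_O(H_{ij}^k)$ to vertical boundaries of $V_{ij}^k$), which your boundary-tracking step reproduces in the pull-back direction. So there is no internal proof here to compare against; the comparison is effectively with \cite{IbRo}. Your overall plan --- decomposing $\partial V_{ij}^k$ into its horizontal and vertical pieces, pulling back through $R_0^{-1}$, and verifying the graph property by the implicit function theorem --- is structurally sound and consistent with the paper's boundary description.

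There are, however, two genuine gaps. First, the pivotal geometric claim that $\Psi_1(T_i^I)$ and $\Psi_1(T_i^O)$ ``fall outside $\mathcal{S}_j$'' is merely asserted and attributed to Figure \ref{hyperchaos}. This is exactly the Conley--Moser crossing condition, and it is the substance of the lemma; it requires a quantitative argument comparing the explicit images under $\Psi_1$ of the radial caps $\{r_s^{out}=a_{i+1}\}$, $\{r_s^{out}=a_i\}$ with the defining constraints of $\mathcal{S}_j$, using the relations among $a_n, b_n$ (Lemma \ref{tec2}) and the bound $\varepsilon^{out}$. Second, the implicit relation you posit for the horizontal boundary is not the one that comes out of the model. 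From the permutation matrix in \eqref{global_flow_eq1}, the output radial coordinate of $R_0(p)$ has the form $\sqrt{(r_u^{in})^{2\delta}\sin^2(\cdot)+(\phi_u^{out})^2}$ with $\phi_u^{out}=\phi_u^{in}-\tfrac{\omega_2}{\alpha_2}\ln r_u^{in}$; on the target torus $\{r_u^{in}=b_j\}$, since $(r_u^{in})^{2\delta}\le b_i^{2\delta}=a_i^2\ll b_j^2$ (again Lemma \ref{tec2}), the leading relation is $\phi_u^{out}\approx\pm b_j$, not $(r_u^{in})^\delta=b_j$. The $r_u^{in}$-derivative responsible for transversality is therefore $\partial_{r_u^{in}}\phi_u^{out}=-\tfrac{\omega_2}{\alpha_2 r_u^{in}}=O(1/r_u^{in})$, which \emph{blows up} as $r_u^{in}\to0$, whereas $\delta(r_u^{in})^{\delta-1}\to0$ when $\delta>1$. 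So it is precisely the logarithmic twist --- the ``shear'' you described as the obstacle to be dominated --- that supplies the monotonicity needed for the implicit function theorem, the reverse of the mechanism you invoked. Your conclusion (the preimage is a Lipschitz graph) is correct, but the stated reason for it is wrong, and the Lipschitz bound $\mu_h$ you need comes from the ratio of the angular derivatives $O((r_u^{in})^\delta)$ to this twist-dominated radial derivative $O(1/r_u^{in})$.
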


By construction, the image under $R_{0}$ of the two horizontal boundaries of each $H_{ij}^{k}$ is a horizontal boundary of $V_{ij}^{k}$. Moreover, the image under $\Pi_O$ of each vertical boundary of $H_{ij}^{k}$ is a horizontal boundary of $\Pi_O(H_{ij}^{k})$ and then a vertical boundary of $R_0(H_{ij}^{k})$.
Now we need to obtain estimations of the rates of contraction and expansion of $H_{ij}^k$ under $R_0$ along the horizontal and vertical directions. The map $d$ is defined in the expression \eqref{dist}.

\begin{lemma} [Hyperbolicity condition \cite{IbRo}] 
\label{hyp_condition}
The following assertions hold:
\begin{itemize}
\item If $H$ is a $\mu_h$-horizontal slab intersecting $H_j$ fully, then $R_{0}^{-1}(H)\cap H_j=: \tilde{H}_j$ is a $\mu_h$-horizontal slab intersecting $H_j$ fully and $d(\tilde{H}_j)\leq \nu_h d(H)$, for some $\nu_h \in\, \, ]\, 0, 1\,[$.
\medbreak
\item If $V$ is a $\mu_v$-vertical slab contained in $\mathcal{S}_j$ such that $V\subset V_{ij}$, then $R_{0}(V)\cap \mathcal{S}_{j}$ is a  $\mu_v$-vertical slab contained in $\mathcal{S}_j$ and $d(R_{0}(V)\cap S_{j})<\nu_v d(V)$, for some $\nu_v \in\, \, ]\, 0, 1\,[$.
\end{itemize}
\end{lemma}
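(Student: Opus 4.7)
Both items are verified by derivative estimates for $R_0 = \Psi_1 \circ \Pi_O$ on the shells $\mathcal{S}_N$, $N \gg 1$. The key quantitative fact is that $r_u^{in}\in [b_{N+1}, b_N]$ is uniformly small on $\mathcal{S}_N$ for $N$ large, because $b_N=\exp(-\alpha_2(\eta+2\pi N)/\omega_2)\to 0$. From the explicit expression \eqref{local_flow_eq}, the derivative $D\Pi_O$ contains the small factor $\delta r^{\delta-1}$ in the radial-to-radial entry and the large factors $\omega_i/(\alpha_2 r)$ in the radial-to-angular entries. Composing with $D\Psi_1$, which is uniformly bounded with uniformly bounded inverse on $C^{out}$, one obtains that on $\mathcal{S}_N$ the linear map $DR_0$ has one strongly contracting direction (of order $r^{\delta-1}$), one strongly expanding direction (of order $1/r$), and one neutral direction.

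For item (i), I would parameterise the $\mu_h$-horizontal slab $\mathcal{H}$ as the region between two $\mu_h$-Lipschitz graphs $r=h_\alpha(\phi_s,\phi_u)$ and $r=h_\beta(\phi_s,\phi_u)$. The preimage $R_0^{-1}(\mathcal{H})\cap H_j$ is carved out by the scalar equation $\pi_r(R_0(p))\in [h_\alpha(\pi_\phi R_0(p)), h_\beta(\pi_\phi R_0(p))]$, where $\pi_r$ and $\pi_\phi$ denote the projections on $r$ and on $(\phi_s,\phi_u)$. Because $\partial_r(\pi_r\circ R_0)$ does not vanish on $\mathcal{S}_N$, the Implicit Function Theorem produces two $C^1$ graphs $r=\widetilde h_\tau(\phi_s^{in},\phi_u^{in})$, $\tau\in\{\alpha,\beta\}$, which bound $R_0^{-1}(\mathcal{H})\cap H_j$. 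The cone-preservation estimate $|\nabla \widetilde h_\tau|\leq \mu_h$ follows from the check that $DR_0^{-1}$ maps horizontal cones into themselves; this uses the block structure of $DR_0^{-1}$ computed from \eqref{inverse_local_flow_eq} together with the uniform bound on $D\Psi_1^{-1}$. The width contraction $d(\widetilde H_j)\leq \nu_h\, d(\mathcal{H})$ is obtained from the first-order estimate $\widetilde h_\beta - \widetilde h_\alpha \approx (h_\beta-h_\alpha)/\partial_r(\pi_r\circ R_0)$, combined with the explicit form of $\partial_r(\pi_r\circ R_0)$ on $\mathcal{S}_N$; choosing $N$ large makes $\nu_h<1$.

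Item (ii) is dual, interchanging the roles of $R_0$ and $R_0^{-1}$ and of horizontal and vertical slabs. A $\mu_v$-vertical slab $V\subset V_{ij}$ is parameterised as a $\mu_v$-Lipschitz graph $(\phi_s,\phi_u)=v(r)$ on $\mathcal{S}_j$; the angular expansion of $DR_0$ (factor $\asymp 1/r$) maps the vertical cone into itself, so that $R_0(V)\cap \mathcal{S}_j$ is again the graph of a $\mu_v$-Lipschitz function $\widetilde v$ of $r$, and the $\phi$-width contracts by a factor $\nu_v<1$ for $N$ large.

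The main technical hurdle is the cone-preservation step. The rotational character of $\Pi_O$, coming from the imaginary parts $\omega_1,\omega_2$ of the spectrum of $df_0(O)$, twists horizontal directions into vertical ones, and the angular shift produced by $\Psi_1$ could in principle reinforce this twist. Hypothesis (P4) ensures that $D\Psi_1$ is transverse to the local stable and unstable bundles of $O$, so that the composition cannot send horizontal cones outside themselves (and analogously for vertical cones). Once that transversality is built in, the contraction constants $\nu_h,\nu_v$ are controlled by $b_N$, $\alpha_1,\alpha_2,\omega_1,\omega_2$, and the $C^1$-norms of $\Psi_1$ and $\Psi_1^{-1}$, each strictly less than $1$ as soon as $N\geq N_0$ is taken large enough.
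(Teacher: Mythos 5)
The paper does not actually prove Lemma \ref{hyp_condition}: it is imported from \cite{IbRo} with only a citation, so there is no internal proof to set your argument against. Your sketch is the standard Conley--Moser verification and is consistent with the derivative and eigenvalue data that the paper \emph{does} display for $R_0$: reading off $D\Pi_O$ from \eqref{local_flow_eq} to isolate the small radial entry $\delta(r_u^{in})^{\delta-1}$ and the large shear entries $\omega_i/(\alpha_2 r_u^{in})$, composing with a uniformly bounded $D\Psi_1$, and then running cone-preservation and Implicit Function Theorem estimates on the scroll pieces $\mathcal{S}_N$ with $N\geq N_0$ (so that Lemma \ref{tec2} applies) is the right route and matches \eqref{hyp_autovalores}.

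There is, however, a concrete gap. You classify the three directions of $DR_0$ on $\mathcal{S}_N$ as strongly contracting, strongly expanding, and \emph{neutral}; but $d$ for a $\mu_v$-vertical slab is a supremum over the full two-dimensional $X=(\phi_s^{in},\phi_u^{in})$-block, hence over both the $\mu_1$- and the $\mu_2$-eigendirections, and item (ii) needs that supremum to shrink by a factor $\nu_v<1$. If the middle direction really were neutral, the second bullet would simply fail: $D\Pi_O$ sends $\partial/\partial\phi_u^{in}$ to $\partial/\partial\phi_u^{out}$ with factor $1$, and ``take $N$ large'' does not help, since shrinking $r$ only sharpens the $\mu_1$ and $\mu_3$ estimates. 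Your argument therefore has to show, not merely assert, that $|\mu_2|$ is strictly below $1$ on the finitely many retained slabs — in the paper this is the ``$1\gtrsim|\mu_2|$'' of \eqref{hyp_autovalores}, ultimately traceable to the volume contraction $\delta>1$, the transversality hypothesis (P4), and the uniform two-sided bound on $D\Psi_1$ (cf.\ Remark \ref{dissipativo}). Relatedly, declaring item (ii) ``dual'' to item (i) is too quick: (i) is a scalar IFT estimate on a one-dimensional thickness, whereas (ii) needs a genuine two-dimensional contraction estimate in the $(\mu_1,\mu_2)$-plane, and the two mechanisms are not interchangeable.
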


Disregarding, if necessary, a finite number of horizontal slabs (which is equivalent to shrink the initial cross section), we have:
\begin{proposition}
\label{shift_dyn}
There exists an $R_{0}$-invariant set of initial conditions $\Lambda_N \subset S \subset \Sigma_O^{in}$ on which the map $R_{0}$ is topologically conjugate to a full shift over a finite number of symbols. The maximal invariant set $\Lambda := \bigcup_{N\in \NN} \Lambda_N$ is a Cantor set. 
\end{proposition}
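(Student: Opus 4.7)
The plan is to combine the Slab Condition (Lemma~\ref{lemma12}) and the Hyperbolicity Condition (Lemma~\ref{hyp_condition}) with the three-dimensional version of the generalized Conley--Moser conditions (see Theorem 2.3.3 of \cite{Wiggins}). Concretely, I will fix $N\in\NN$ large enough that for every pair $i,j\in\{N, N+1,\dots, N+m\}$ (with $m\geq 1$ arbitrary but finite) the slabs $\mathcal{S}_i$ and $\mathcal{S}_j$ lie deep enough inside $S$ so that Lemmas~\ref{tec2}, \ref{lemma12} and \ref{hyp_condition} apply. I will then use the finite family $\{H^k_{ij}\}_{i,j,k}$ as the combinatorial alphabet of a symbolic coding.

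First, I would package the data provided by the previous two lemmas into the Conley--Moser hypotheses. Lemma~\ref{lemma12} gives the fundamental geometric condition: $R_0$ maps each $H_{ij}^k$ horizontally across $\mathcal{S}_j$ as a fully intersecting vertical slab $V_{ij}^k$, and the vertical (resp.\ horizontal) boundaries of $H_{ij}^k$ are sent to the vertical (resp.\ horizontal) boundaries of $V_{ij}^k$. Lemma~\ref{hyp_condition} provides the uniform contraction/expansion rates $\nu_h,\nu_v\in\,]0,1[$ in the horizontal and vertical directions, which is the usual $\mu$-sector bundle condition in the formulation of Wiggins. I would check carefully that the estimates on the rates $\nu_h,\nu_v$ are independent of the chosen finite block of indices, which is the place where the exponential control provided by the linearized flow \eqref{equation1} and the estimate $b_{N+1}>a_N$ in Lemma~\ref{tec2} is used.

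With these hypotheses in hand, the Conley--Moser machinery produces, for each fixed $N$ and for each finite subcollection of indices of size, say, $\ell\in\NN$, a compact $R_0$-invariant set
\[
\Lambda_N=\bigcap_{n\in\ZZ} R_0^{n}\!\left(\bigcup_{(i,j,k)} H_{ij}^k\right)
\]
together with a homeomorphism $h:\Lambda_N\to \Sigma_\ell$ (the two-sided full shift on the alphabet of cardinality $\ell$ indexing the slabs $H_{ij}^k$) conjugating $R_0|_{\Lambda_N}$ to the shift $\sigma$. The construction is the usual Cantor-type intersection argument: horizontal slabs shrink to $\mu_h$-horizontal slices under $\nu_h$-contraction under $R_0^{-1}$, vertical slabs shrink to $\mu_v$-vertical slices under $\nu_v$-contraction under $R_0$, and their intersection is a single point. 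This gives (a) of Theorem~\ref{Th2b}.

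Finally, I would take $\Lambda=\bigcup_{N\geq N_0}\Lambda_N$ and argue that it is a Cantor set. Each $\Lambda_N$ is itself a Cantor set (product of two Cantor sets in the horizontal/vertical directions times a trivial factor, or more precisely the image under $h^{-1}$ of the standard Cantor structure of $\Sigma_\ell$). The families $\mathcal{S}_N$ are pairwise disjoint by Lemma~\ref{tec2} and accumulate on $W^s_{loc}(O)\cap\Sigma_O^{in}$ by Proposition~\ref{My_Prop5}, so the distinct $\Lambda_N$ accumulate only on the local stable manifold of $O$; adding this accumulation to the disjoint union of Cantor sets preserves the total disconnectedness and perfectness, so $\Lambda$ is a Cantor set. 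The main technical nuisance, rather than any conceptual obstacle, is making the Conley--Moser framework from \cite{Wiggins} literally fit our three-dimensional slabs in $\Sigma_O^{in}$ with the orientation of the coordinates $(\phi_s^{in},r_u^{in},\phi_u^{in})$ consistent with the horizontal/vertical decomposition of \S\ref{slices and slabs}; this is where I expect to spend the most care.
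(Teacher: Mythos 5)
Your proof is correct and follows essentially the same route as the paper: both invoke the generalized Conley--Moser machinery from Wiggins, feeding in the Slab Condition (Lemma~\ref{lemma12}) and the Hyperbolicity Condition (Lemma~\ref{hyp_condition}) to produce the invariant Cantor set by nested intersections of shrinking horizontal and vertical slabs. The one place where your write-up is arguably cleaner than the paper's is the definition of $\Lambda_N$: you build it from a genuinely finite block of indices $\{N,\dots,N+m\}$, which makes $\Lambda_N$ an honest $R_0$-invariant finite-symbol subshift as the statement requires, whereas the paper's shorthand $\Lambda_N:=\Lambda\cap\mathcal{S}_N$ is not itself $R_0$-invariant (points jump between different $\mathcal{S}_j$'s under $R_0$). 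One small citation slip: the pairwise disjointness of the $\mathcal{S}_N$ follows from their definition as $\Pi_O^{-1}$-preimages of the disjoint hollow cylinders $M_N^{out}$, not from Lemma~\ref{tec2}; that lemma instead ensures $V_{ij}\neq\varnothing$ (the images are wide enough to cross neighbouring slabs), which is what makes the full-shift coding possible.
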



We will briefly review the proof to recollect the strategy for the construction of the Cantor set, which will be needed in the sequel.
\begin{proof}
Consider the set of points that remain in $\mathcal{E}=\bigcup_{N\in\NN}\mathcal{S}_N$ (see \eqref{sn}) under all backward and forward iterations of $R_{0}$. It may be encoded by a sequence of integers greater or equal to $n_0\in \NN$ as follows. Given a sequence of integers greater or equal to $N_0$, say $(s_N)_{N \in \ZZ}$, define
$$\Lambda^{-\infty}_{(s_N)_{N \in \ZZ}} = \left\{p \in \mathcal{E}: \left(R_{0}\right)^{-i}(p) \in \mathcal{S}_{s_{-i}},\,\,\forall \,i\,\in \NN\cup\{0\}\right\}$$
and
$$\Lambda^{-\infty} = \bigcup_{(s_N)_{N \in \ZZ}} \,\Lambda^{-\infty}_{(s_N)_{N \in \ZZ}}.$$
More generally, using the Slab Condition stated in Lemma \ref{lemma12}, for $k \in \NN$, set
\begin{eqnarray*}
\Lambda^{-1} &=& \bigcup_{(s_N)_{N \in \ZZ}} \,\left(R_{0}(V_{s_{-1}} )\cap \mathcal{S}_{s_0}\right) \equiv \bigcup_{(s_N)_{N \in \ZZ}} \,V_{s_0 s_{-1}}\\
\Lambda^{-2} &=& \bigcup_{(s_N)_{N \in \ZZ}} \, \left(R_{0}(V_{s_{-1}s_{-2}}) \cap \mathcal{S}_{s_0}\right) \\ & =&  \bigcup_{(s_N)_{N \in \ZZ}}\, \left(R_{0}^2(\mathcal{S}_{s_{-2}}) \cap R_{0}(\mathcal{S}_{s_{-1}}) \cap \mathcal{S}_{s_0}\right) \equiv \bigcup_{(s_N)_{N \in \ZZ}}\, V_{s_0 s_{-1}s_{-2}}\\
&\vdots & \\
\Lambda^{-k} &=& \bigcup_{(s_N)_{N \in \ZZ}} \,\left(R_{0}(V_{s_{-1}s_{-2}\ldots s_{-k}})  \cap \mathcal{S}_{s_0}\right) \\
& = &  \bigcup_{(s_N)_{N \in \ZZ}} \,(R_{0}^k (\mathcal{S}_{s_{-k}})\cap R_{0}^{k-1}(\mathcal{S}_{s_{-k+1}})\cap\ldots \cap R_{0}(\mathcal{S}_{s_{-1}}) \cap \mathcal{S}_{s_0}) \\
&\equiv& \bigcup_{(s_N)_{N \in \ZZ}} V_{s_0 s_{-1}\ldots s_{-k}}.
\end{eqnarray*}

Observe that, due to the Hyperbolicity Condition established in Lemma \ref{hyp_condition}, for every $k \in \NN\cup\{0\}$ the set $\Lambda^{-k}$ is a disjoint union of vertical slabs $V_{s_0 s_{-1}\ldots s_{-k}}$ contained in $\mathcal{S}_{s_0}$, where the width of $V_{s_0 s_{-1}s_{-2}\ldots s_{-k}}$ is $(\nu_v)^{k-1}$ times smaller than the width of $V_{s_0 s_{-1}}$ and, moreover, $\lim_{k \to +\infty}\, (\nu_v)^{k} = 0$ and $V_{s_0 s_{-1}s_{-2}\ldots s_{-k}}\subset V_{s_0 s_{-1}s_{-2}\ldots s_{-(k-1)}}$. Consequently, the set $\Lambda^{-\infty}$, which is $\bigcap_{k \in \NN}\,\Lambda^{-k}$, consists of infinitely many vertical slices whose boundaries lie on $\partial_v \mathcal{S}_{s_0}$ -- see Figure \ref{two spirals}. The construction of
$$\Lambda^{+\infty}=\bigcup_{(s_N)_{N \in \ZZ}} \,\left\{p \in \mathcal{E}: \left(R_{0}\right)^{i}(p) \in \mathcal{S}_{s_{-i}}, \,\forall \,i\,\in \NN\cup\{0\}\right\}$$
is similar. Finally, the set trapped in ${S}$ (see \eqref{def_S}) by all the iterations, forward and backward, of $R_{0}|_{S}$ is precisely
\begin{equation}\label{Lambda_def}
\Lambda = \Lambda^{-\infty} \cap \Lambda^{+\infty}
\end{equation}
and is the intersection of an uncountable set of vertical slices with an uncountable set of horizontal transverse slices.  
Therefore, by construction, the set $\Lambda = \bigcup_{N\in \NN} \Lambda_N:= \bigcup_{N\in \NN}( \Lambda \cap \mathcal{S}_N)$
is a Cantor set which is in a one-to-one correspondence with the family of bi-infinite sequences of a countable set of symbols (the itineraries of the $R_0$ orbits inside a partition defined by a family of disjoint vertical slabs) and where the dynamics of $R_{0}$ is conjugate to a Bernoulli shift with countably many symbols.
\end{proof}

\begin{figure}[ht]
\begin{center}
\includegraphics[height=4.9cm]{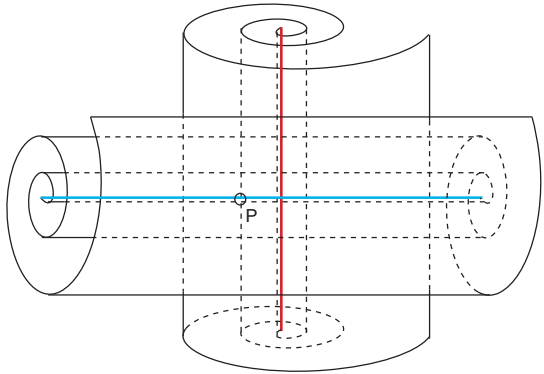}
\end{center}
\caption{\small Any point $P$ in $\Lambda$ is uniquely identified with two spiralling sheets. }
\label{two spirals}
\end{figure}

\begin{remark}[Whiskers]
\label{whiskers}
The previous construction allows us to conclude that when $\lambda=0$ and $N,M\in \NN$ large enough, the hyperbolic horseshoes $\Lambda_{N}$,  $\Lambda_{M}$ are heteroclinically related. More precisely, the unstable manifolds of the periodic orbits in $\Lambda_{N}$, are long enough to intersect the stable manifolds of the periodic points of $\Lambda_{M}$. That is, given two horizontal slabs, there exist periodic solutions jumping from one slab to another, and so the homoclinic classes associated to the infinitely many horseshoes are not disjoint. For $\lambda \approx 0$ small, this property persists for a finite (arbitrarily large) number  of horseshoes.  Regarding this subject, the chapter about the \emph{whiskers} of the horseshoes in Gonchenko \emph{et al} \cite{GST} is worthwhile reading.
\end{remark}


\subsection{Hyperbolicity of $\Lambda$:}
\label{hyperbolicity}
Using the expression \eqref{global_transition1}, in the local coordinates $(X, Y, Z)$, the eigenvalues of $DR_{0}$, when evaluated at points of $C^{in}\subset\Sigma_O^{in}$ with $X^2+Y^2 \gtrsim 0$ and  $Z\approx 0$ small enough, lie in different connected components of $\RR^2\backslash S^1$. According to \S 5.1 of \cite{FS},  the eigenvalues  $\mu_1, \mu_2$ and $\mu_3$ of $DR_0|_{(X_0, Y_0, Z_0)\in \Lambda }$  are real and satisfy:
\begin{equation}
\label{hyp_autovalores}
\mu_1= 
 -O\left(\sqrt{X_0^2+Y_0^2}\right)\approx 0, \qquad 1\gtrsim |\mu_2|\quad \text{and} \quad \mu_3  = 
O\left(\frac{-1}{\sqrt{X_0^2+Y_0^2}}\right) \ll -1.
\end{equation}
\medbreak
Their  eigendirections are such that:
\medbreak
\begin{itemize}
\item lines parallel to $W^s_{loc}(O)$  are contracted under $DR_0$ and
\medbreak
\item lines connecting $T_n^I$ and $T_n^O$ (see Figure \ref{annulus}) are stretched under $DR_{0}$.
\end{itemize}
\bigbreak
This agrees well with \S 3.2 of \cite{Wiggins}. These properties allow us to apply the construction of appropriate family of cones and so one concludes that  the map $R_0$ induces on the tangent bundle $T_\Lambda$ one expanding direction and two contracting directions -- \cite[\S 6]{KH95}. Item (c) of Theorem \ref{Th2b} is proved. 

\begin{remark}
For every compact invariant subset $C$ of the cross section $\Sigma\backslash W^s(O)$, the intersection $C \cap \Lambda$ is uniformly hyperbolic. 
\end{remark}

\begin{remark}
\label{complex_rem}
Numerically, using \emph{Maple}, it is possible to find open regions $\mathcal{W}$ in the parameter space $(X,Y,Z)$ for which the map $DR_0|_\mathcal{W}$ has a pair of complex (non-real) eigenvalues. The points of $C^{in}$ where $R_0$ is well defined and belong to $\mathcal{W}$
should be excluded from $\overline{\Lambda}$, although they may be homoclinically related to $\Lambda$. 
\end{remark}


\section{Generalized homoclinic tangency}

\label{perturbations}

The goal of this section is to prove that, in the $C^1$-topology, the map $R_0^{-1}$ may be approximated by a $C^r$ diffeomorphism with a homoclinic tangency satisfying \textbf{[T2]--[T4]} of Definition \ref{Tatjer_condition}, for $r\geq 5$. Once this is proved, by definition, the map $R_0$ may be approximated by a $C^r$ diffeomorphism with a Tatjer homoclinic tangency satisfying \textbf{[T1]--[T4]}  -- see page 257 of \cite{Tatjer}.  

\begin{remark}
We suggest the reader to think on the geometry of $R_0^{-1}$ as it was the first return map for a cycle to a bifocus at which the vector field has positive divergence ($\delta<1$). In particular, for $R_0^{-1}$, the local unstable manifold of $P_M$ may be seen as a two-dimensional disk crossing transversally $W_{loc}^s(O)\cap \Sigma_O^{in}$. Therefore, Proposition \ref{My_Prop5} may be applied to this disk.
\end{remark}

\subsection{First perturbation}
\label{first perturbation}

In this subsection, we prove the existence of a tangency associated to two periodic orbits $P_N, P_M$ of the invariant sets $\Lambda_N, \Lambda_M \subset \Lambda$, for some $N, M\in \NN$. 
\begin{definition}
Let $P_N$ be a periodic point of $\Lambda_N$ and $P_M$ a periodic point of $\Lambda_M$ of period arbitrarily large.
We say that two manifolds $W^u(P_N)$ and $W^s(P_M)$ have a \emph{quadratic tangency} (or contact of order 1) at $y_0$ is there exists an arc $\ell \subset W^s(P_M)$, a regular surface $S \subset W^u(P_N)$ and some $C^2$-change of coordinates on an open neighbourhood $U_{y_0}$ of $y_0$ such that:
\begin{itemize} 
\item $\dim W^u(P_N) = 2$ and $\dim W^s(P_M)=1$;
\item $y_0=(0,0,0)$;
\item $S=\{ (x,y,z) \in U_y: z=0\}$;
\item $\ell$ is a regular curve parametrized by $t$ as $\ell(t)=(x(t), y(t), z(t))$ and $\ell(0)=y_0$ and 
\item $z'(0)= 0$ and $z''(0) \neq 0$.
\end{itemize}

\end{definition}

\begin{figure}[ht]
\begin{center}
\includegraphics[height=2.3cm]{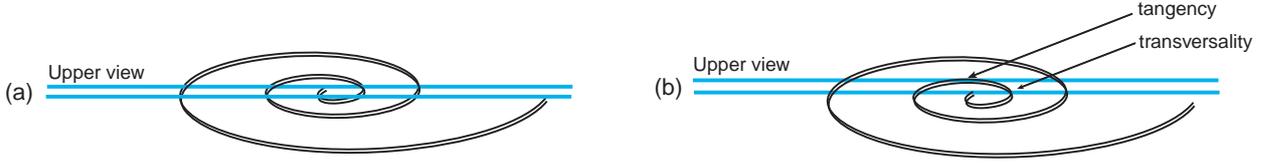}
\end{center}
\caption{\small  There are diffeomorphisms arbitrarily close to $R_{0}$ for which me may find a heteroclinic tangency associated to two hyperbolic periodic points of $\Lambda_N,\Lambda_M$, fome some $N,M\in \NN$, just by moving the saddle-value $\delta>1$. The intersection of spiralling sheets with a plane is locally a spiral (see Lemma 3 of \cite{Hart}), this is why the upper view has this spiralling shape. }
\label{Bif2}
\end{figure}

\begin{lemma}
\label{new configuration}
For $r\geq 5$, there exists a diffeomorphism $R_1$ defined on a subset of $\Sigma_O^{in}$, $C^r$-close to $R_0^{-1}$,   having two saddle periodic points, say $P_N$ and $P_M$, satisfying the following conditions:
\begin{enumerate}
\item $P_N$ and $P_M$ are arbitrarily close to each other;
\item the stability index of $P_N$ and $P_M$ is 1;
\item there is a quadratic tangency between $W^u(P_N)$ and $W^s(P_M)$;
\item $P_N$ and $P_M$ are heteroclinic related to each other, meaning that $W^u(P_M)  \pitchfork  W^s(P_N) \neq \emptyset$.
\end{enumerate}
\end{lemma}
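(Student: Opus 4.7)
The strategy has three steps: locate two saddle periodic points inside two distinct suspended horseshoes from Theorem \ref{Th2b}, use the whisker structure of Remark \ref{whiskers} to obtain transverse heteroclinic intersections in both directions, and finally perform a $C^r$-small, locally supported perturbation of $R_0^{-1}$ that converts one of these transverse intersections into a quadratic tangency while preserving the other.

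\emph{Selecting the periodic points.} Fix $q \in \gamma \cap \Sigma_O^{in}$ and a small neighbourhood $U$ of $q$. By Theorem \ref{Th2b}(b) the family $(\Lambda_N)_{N \geq N_0}$ accumulates on $\Gamma$, so one can pick $N \neq M$ large enough that $\Lambda_N \cap U$ and $\Lambda_M \cap U$ both contain saddle periodic points $P_N$ and $P_M$; shrinking $U$ gives condition~(1). By Theorem \ref{Th2b}(c), $DR_0$ has one expanding and two contracting directions at these orbits; after inversion, $W^s_{R_0^{-1}}(P_\bullet)$ is one-dimensional and $W^u_{R_0^{-1}}(P_\bullet)$ is two-dimensional, giving condition~(2). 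Hyperbolicity is $C^1$-open, so $P_N$ and $P_M$ (with the same stability index) persist for every $R_1$ sufficiently $C^r$-close to $R_0^{-1}$.

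\emph{Heteroclinic relations.} By Remark \ref{whiskers} the horseshoes $\Lambda_N$ and $\Lambda_M$ are heteroclinically related in both directions under $R_0$, so
\begin{equation*}
W^u_{R_0}(P_N) \pitchfork W^s_{R_0}(P_M) \neq \varnothing, \qquad W^u_{R_0}(P_M) \pitchfork W^s_{R_0}(P_N) \neq \varnothing.
\end{equation*}
Since stable and unstable manifolds swap under inversion, the same statement reads for $R_0^{-1}$ as $W^u(P_M) \pitchfork W^s(P_N) \neq \varnothing$ (which gives (4), since transversality is $C^r$-persistent) together with $W^u(P_N) \cap W^s(P_M) \neq \varnothing$.

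\emph{Creating the quadratic tangency.} Choose $y_0 \in W^u(P_N) \pitchfork W^s(P_M)$ (for $R_0^{-1}$) whose orbit is disjoint from $\{P_N, P_M\}$ and from a transverse intersection witnessing (4). In a small neighbourhood $\mathcal{N}$ of $y_0$ take $C^r$ coordinates $(x, y, z)$ with $y_0 = (0,0,0)$, $W^u_{loc}(P_N) \cap \mathcal{N} = \{z = 0\}$, and $W^s_{loc}(P_M) \cap \mathcal{N}$ parametrised by $t \mapsto (0, 0, t)$. A bump-function perturbation of $R_0^{-1}$ compactly supported in $\mathcal{N}$ bends the local piece of $W^s(P_M)$ into a curve of the form $t \mapsto (t, 0, t^2 - \varepsilon)$; for $\varepsilon > 0$ small enough this modified stable manifold has a quadratic contact with $\{z = 0\}$ (i.e. $z'(0) = 0$, $z''(0) \neq 0$) at a point close to $y_0$, realising~(3). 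Because the perturbation is supported away from $P_N$, $P_M$, and the intersection point used for (4), all four conditions hold simultaneously for the resulting diffeomorphism $R_1$.

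\emph{Main obstacle.} The delicate point is Step~3: one must simultaneously keep the perturbation $C^r$-small for $r \geq 5$, arrange the contact to be quadratic (not degenerate), and control the iterated images so that no unwanted secondary tangency or loss of transversality is produced. A clean alternative, suggested by Figure \ref{Bif2}, is to realize the perturbation parametrically by moving the saddle-value $\delta > 1$ of the underlying vector field: this continuously rotates the spiralling sheet $W^u(P_N)$ relative to the curve $W^s(P_M)$, and an intermediate value argument applied to a transversality functional produces a parameter value at which a quadratic tangency occurs. The verification that the contact thus created is non-degenerate of order one is standard provided the one-parameter family of return maps is smooth, which is ensured by Theorem \ref{normal form1}.
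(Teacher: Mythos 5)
The framework you set up (picking $P_N,P_M$ in neighbouring horseshoes, reading off the stability indices from Theorem~\ref{Th2b}(c), and obtaining the transverse relation (4) from Remark~\ref{whiskers}) matches the paper and is sound. The gap is in Step 3. You propose to take a \emph{transverse} intersection $y_0 \in W^u(P_N)\pitchfork W^s(P_M)$ and convert it into a tangency by a bump-function perturbation supported near $y_0$. This cannot work: transversality is a $C^1$-open condition, so every $C^1$-small (a fortiori $C^r$-small, $r\geq 5$) perturbation of $R_0^{-1}$ still yields a transverse intersection of the continued manifolds near $y_0$. Concretely, replacing $t\mapsto(0,0,t)$ by $t\mapsto(t,0,t^2-\varepsilon)$ changes the tangent vector at the origin from $(0,0,1)$ to $(1,0,0)$, a change of unit length, so the perturbation is never small; and for $\varepsilon>0$ that curve meets $\{z=0\}$ transversally at $t=\pm\sqrt{\varepsilon}$, so only the degenerate value $\varepsilon=0$ would even produce a tangency.

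The correct mechanism is the one you relegate to an ``alternative,'' and it is precisely what the paper uses: iterating backwards, $W^u(R_0^{-1},P_N)$ is not a flat disk near $y_0$ but, by Proposition~\ref{My_Prop5}, a \emph{spiralling sheet} accumulating on $W^s_{loc}(O)\cap\Sigma_O^{in}$ and winding infinitely often; the one-dimensional $W^s(R_0^{-1},P_M)$ threads through it, and as the saddle value $\delta$ varies the spiral rotates, so tangencies are produced on turns of the spiral far from any persisting transverse intersection. The paper makes this precise by invoking Lemma~5.3 of Ovsyannikov--Shilnikov~\cite{OS92}, which gives density of such heteroclinic tangencies in the $\delta$-parameter family. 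Your ``intermediate value argument applied to a transversality functional'' gestures at this but does not identify the spiralling geometry as the source of the tangencies nor supply a concrete result to invoke; that is the missing content. Promote the alternative to the main argument, discard the bump-function step, and replace the vague intermediate-value step by the spiralling-sheet picture together with the cited lemma.
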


\begin{proof}
For $\lambda=0$, the union of horseshoes $(\Lambda_N)_{N\in \NN}$ accumulates on $\Gamma$ as well as the invariant manifolds of its periodic orbits (see the proof of Proposition \ref{shift_dyn} and Remark \ref{whiskers}). 
The sets $W^u(R_0^{-1},P_N)$ and $W^u(R_0^{-1},P_M)$ may be seen as two-dimensional disks crossing transversally $W_{loc}^s(O)\cap \Sigma_O^{in}$. Therefore, using Proposition \ref{My_Prop5}, the sets $\Pi_O^{-1}\circ \Psi_1^{-1}(W_{loc}^u(R_0^{-1},P_N))$ and $\Pi_O^{-1}\circ \Psi_1^{-1}(W_{loc}^u(R_0^{-1},P_M))$ are spiralling sheets accumulating on $W^s_{loc}( O)\cap \Sigma_O^{in}$. Using Lemma 5.3 of \cite[pp. 435]{OS92}\footnote{The existence of sinks found in Lemma 5.3 of \cite{OS92} are a consequence of tangencies associated to dissipative periodic orbits. This remark has been pointed out by D. Turaev.}, we know that heteroclinic tangencies are dense in the family of vector fields satisfying (P1)--(P4), just by moving the saddle value $\delta$, as illustrated in Figure \ref{Bif2}. Thus, there exists a diffeomorphism $R_1$ defined on a subset of $\Sigma_O^{in}$, $C^r$-close to $R_0^{-1}$,   having two saddle periodic points in $\Lambda$, say $P_N \in \Lambda_N$ and $P_M \in \Lambda_M$, fome some $N,M\in \NN$, whose geometric configuration is as in the statement. 
\end{proof}

\bigbreak

\begin{remark}
\label{saddle-node}
Another possibility to get the geometric configuration of Lemma \ref{new configuration} is  generically breaking the cycle $\Gamma$ (using (P5)), making use of the theory described in \cite{FS, YA} for a family of vector fields. In Figure 18 of \cite{FS}, the authors pointed out the evolution of a fixed point (for the first return map) as function on its period. In particular, when $\lambda \rightarrow 0$, the corresponding period goes to $+\infty$. On the turning points of the snaking curve (see Figure \ref{snaking}), saddle-node and period doubling occur. These local bifurcations are the result of bigger global bifurcations associated to the unfolding of tangencies. 
 \end{remark}

\begin{figure}[ht]
\begin{center}
\includegraphics[height=6.1cm]{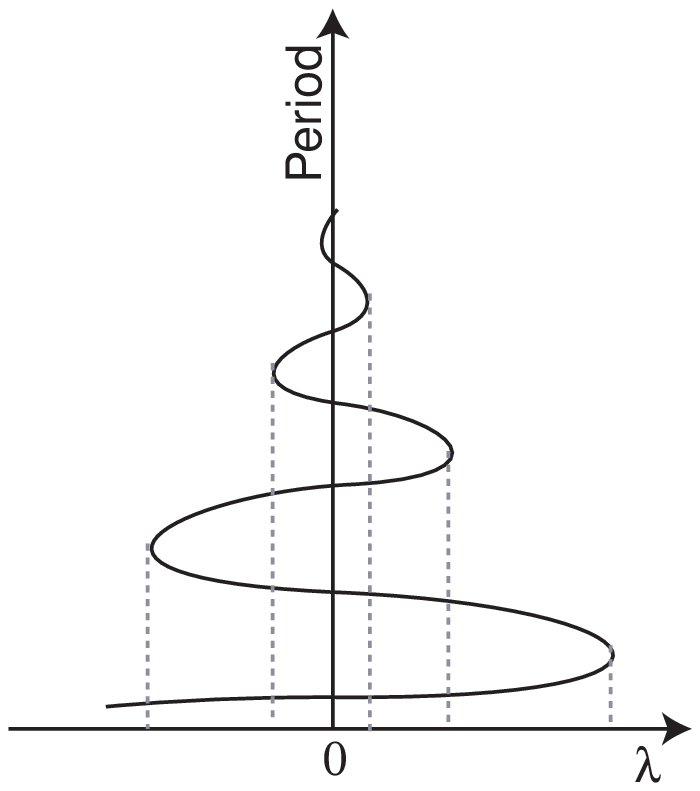}
\end{center}
\caption{\small  Snaking curve studied in \cite{FS} representing the evolution of the parameter $\lambda$ and the periodic point of the $n$-periodic points of the first return maps, $n\geq 1$. At the turning points of the snaking curve, the periodic point changes the stability, via a saddle-node and period doubling bifurcation.  }
\label{snaking}
\end{figure}

 \subsection{Second perturbation}
 \label{SS:TH}
 The main goal of the second perturbation is to obtain an equidimensional cycle  associated to two  heteroclinically related periodic points with different signatures. From now on, we make use of the following technical hypothesis (TH): 
 \medbreak
 \textbf{(TH):} For $r\geq 5$, there exists a  $C^r$-diffeomorphism ${R}_{2}$, $C^1$-close to $R_1$, with two saddle periodic points, say $P_N$ and $P_M$ in $C^{in}$, satisfying the conditions of Lemma \ref{new configuration} and such that the spectrum of $DR_2$ at $P_M$ has real eigenvalues satisfying \eqref{hyp_autovalores} and $DR_2$ at $P_N$ does not admit a dominated splitting into one-dimensional sub-bundles along its orbit.
 \medbreak
 The main focus of (TH) is not the co-existence of the two saddles $P_N$ and $P_M$ in $C^{in}$, but rather its  heteroclinic relation.  The lack of domination is a natural assumption due to the plethora of bifurcations  which arise either when we move $\delta>1$ (cf. Remark \ref{complex_rem}) or when the cycle is generically broken (cf. Remark \ref{saddle-node}).  
 
 \subsection{Third Perturbation}
 Using Franks' Lemma, the next result allows us to perform perturbations of the derivative in small neighbourhoods of the orbit of a point. Although the result holds in the $C^1$-topology, the resulting diffeomorphism may be $C^r$, with $r>1$ arbitrarily large.

\label{second perturbation}

\begin{lemma}
\label{new configuration2}
There exists a  $C^r$-diffeomorphism ${R}_{3}$, $C^1$-close to $R_2$, with two saddle periodic points, say $P_N$ and $P_M$, satisfying the following conditions (see Figure \ref{bif1apicture}):
\begin{enumerate}
\item $P_N$ and $P_M$ are arbitrarily close to each other;

\item the stability index of $P_N$ and $P_M$ is 1;
\item the expanding eigenvalues of $P_N$ and $P_M$ are non-real and real, respectively;
\item there is a quadratic tangency between $W^u(P_N)$ and $W^s(P_M)$;
\item $P_N$ and $P_M$ are heteroclinic related to each other, meaning that $W^u(P_M)  \pitchfork W^s(P_N) \neq \emptyset$.
\end{enumerate}
\end{lemma}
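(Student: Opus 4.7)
The plan is to start from the diffeomorphism $R_2$ supplied by hypothesis (TH) and use Franks' Lemma, combined with the absence of a dominated splitting of $DR_2$ at $P_N$ into one-dimensional sub-bundles, to turn the two expanding eigenvalues at $P_N$ into a complex conjugate pair. Every other feature required by the statement --- the persistence of $P_M$ with real eigenvalues, the heteroclinic transverse intersection $W^u(P_M)\pitchfork W^s(P_N)$, and a quadratic tangency between $W^u(P_N)$ and $W^s(P_M)$ --- will then be recovered via additional small, carefully supported perturbations.

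First I would fix pairwise disjoint neighbourhoods $U_0,\dots,U_{\xi-1}$ of the $\xi$ points of the orbit of $P_N$, chosen so small that they avoid the orbit of $P_M$, the tangency point $y_0$, and the transverse heteroclinic orbit in $W^u(P_M)\cap W^s(P_N)$. By Franks' Lemma, any linear cocycle $\xi$-close to $\{DR_2^{\,j}(P_N)\}_{j=0}^{\xi-1}$ in operator norm can be realized as the derivative along the orbit of some $C^1$-small perturbation of $R_2$ supported in $\bigcup_j U_j$; since $R_2$ is $C^r$, one may arrange the perturbation to be $C^r$-smooth as well. Because the restriction of the two-dimensional expanding bundle of $P_N$ admits no dominated splitting into one-dimensional sub-bundles, the classical criterion (as used for instance in Bonatti--Díaz--Pujals and Bonatti--Gourmelon--Vivier) allows us to compose $DR_2$ at one of the $U_j$ with an arbitrarily small rotation in the expanding plane and thereby produce two complex-conjugate expanding eigenvalues of equal modulus. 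Call the resulting diffeomorphism $\tilde R$.

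Second, I would verify that the rest of the structure survives. Since the support of the perturbation lies in $\bigcup_j U_j$, the orbit of $P_M$ and the spectrum of $DR_2$ at $P_M$ are unchanged, so item (3) for $P_M$ and the hyperbolicity of $P_M$ are automatic. Both periodic orbits persist by structural stability of hyperbolic points and their stable indices are preserved. The transverse intersection $W^u(P_M)\pitchfork W^s(P_N)$ persists by transversality. The only fragile feature is the quadratic tangency at $y_0$, which may displace slightly or become transverse under $\tilde R$. To repair it, I would perform one additional $C^1$-small, $C^r$-smooth perturbation supported in a tiny neighbourhood $V$ of $y_0$ disjoint from all the previously-used supports and from the orbits of $P_N$, $P_M$: using the $\lambda$-lemma to obtain $C^1$-continuity of large pieces of the invariant manifolds and a standard bump-function argument, one can slightly push the continuation of $W^u(P_N)$ (a two-dimensional surface) until it is quadratically tangent to the continuation of $W^s(P_M)$ (a one-dimensional curve) at a point close to $y_0$. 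The composition of these two localized perturbations gives the desired $R_3$.

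The main obstacle is the coordination between the three localized modifications: changing the spectrum at $P_N$, preserving the transverse heteroclinic intersection, and re-establishing the quadratic tangency. The spectral perturbation at $P_N$ inevitably moves pieces of $W^u(P_N)$ by an amount comparable to the $C^1$-size of the perturbation, so one must ensure that the tangency-creating perturbation in $V$ is both large enough to restore contact between the (now shifted) invariant manifolds and small enough not to interfere with anything else. This is handled by choosing the supports in the order: first $V$ far from $\bigcup_j U_j$ and from the heteroclinic orbit, then $\bigcup_j U_j$ of diameter much smaller than the distance to $V$, and finally invoking the density of quadratic tangencies in any $C^1$-neighbourhood of a diffeomorphism exhibiting tangent invariant manifolds (as used already in the proof of Lemma~\ref{new configuration}).
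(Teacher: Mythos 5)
Your argument follows the same overall strategy as the paper: invoke Franks' Lemma, exploit the fact that (TH) rules out a dominated splitting of the two-dimensional expanding bundle at $P_N$ (equivalently, that $|\mu_1^{-1}|$ and $|\mu_2^{-1}|$ can be made comparable for a periodic point of large period), compose $DR_2$ with a small rotation on $E^u$ to obtain a complex conjugate pair of expanding eigenvalues, and leave $P_M$ and its spectrum untouched by supporting the modification in a small neighbourhood of the orbit of $P_N$. Where you differ from the paper — and, in my view, improve on it — is in the handling of the quadratic tangency. The paper simply asserts that ``by construction, the perturbation does not destroy the tangency described in Lemma~\ref{new configuration}''. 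As you correctly observe, this is not automatic: even though the Franks' perturbation is supported away from the tangency point $y_0$, the backward orbit of $y_0$ enters that support, so the local branch of $W^u(P_N)$ through $y_0$ moves by an amount comparable to the $C^1$-size of the perturbation, and a quadratic tangency, being a codimension-one phenomenon, can be destroyed or turned transverse by an arbitrarily $C^1$-small change. Your remedy — a further $C^1$-small, $C^r$-smooth bump-function perturbation supported in a small neighbourhood of $y_0$, chosen disjoint from the Franks' supports and from the transverse heteroclinic orbit in $W^u(P_M)\cap W^s(P_N)$, which pushes the displaced piece of $W^u(P_N)$ back into quadratic contact with $W^s(P_M)$ — is exactly what is needed and is fully in the spirit of how the tangency was produced in Lemma~\ref{new configuration} in the first place. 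Aside from this supplementary repair step, and your more explicit appeal to the Bonatti--D\'iaz--Pujals / Bonatti--Gourmelon--Vivier mechanism for converting lack of domination into complex eigenvalues, the two proofs coincide.
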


\begin{proof}
Let us start with the configuration given in Lemma \ref{new configuration} combined with (TH). Using Remark \ref{whiskers}, the period of $P_N$, say $\xi$, may be chosen arbitrarily large.  Since $DR_2$ has two real eigenvalues larger than 1 with no nilpotent part, then, for any $\varepsilon>0$, there is a neighbourhood $U$ of the periodic orbit of $P_N$ and a $\varepsilon$-perturbation $R_3$ of $R_2$ in the $C^1$-topology, such that:
\begin{itemize}
\item $R_3$ coincidies with $R_{2}$ outside $U$ and on the orbit of $P_N$;
\item the differential $DR_3^{\xi}|_{E^u}$ has a pair of complex eigenvalues with real part greater than 1, say $\mu^\star+\omega^\star i$.
\end{itemize}
This can be obtained by the Franks' Lemma \cite{Franks}. The derivative of the new diffeomorphism can be written as 
$$
DR_3= R_\theta \circ DR_2,
$$
where $R_\theta $ represents the $\theta$-rotation on the eigenplane $E^u$ associated to the eigenvalues $\mu_1^{-1}$ and $\mu_2^{-1}$. $DR_3$ should be the identity on $E^s$, the eigendirection associated to the eigenvalue $\mu_3^{-1}$. 
This perturbation is specific of the $C^1$-topology and is possible due to two important properties:
\begin{itemize}
\item within a compact set of $C^{in}\subset \Sigma_O^{in}$ containing $\Lambda_M$ and disjoint from the stable manifold of $P_M$, the norm of the eigenvalues $|\mu_1|^{-1}$ and $|\mu_2|^{-1}$ is uniformly bounded (and close to each other) and
\item the period of $P_N\in \Lambda_M$ may be taken large enough. If the period  of $P_N$ is not large enough,  in Lemma \ref{new configuration} we should choose another periodic point with larger period. 

\end{itemize}
By construction, the perturbation does not destroy the tangency described in Lemma \ref{new configuration}; in particular the configuration of Lemma \ref{new configuration} persists.
\end{proof}

Let  $P_N= \left(X,Y, Z\right)$ be a periodic point of $\Lambda_N$ as in Lemma \ref{new configuration2}.  Using the notation of the previous proof, in a small neighbourhood of $P_N $, we may define a local chart $(\overline{X}, \overline{Y}, \overline{Z})$ such that:
\begin{equation}
\label{new coordinates}
R_{3} (\overline{X}, \overline{Y}, \overline{Z})  = 
\left(\begin{array}{ccc}
\mu^\star \cos \left(2\pi \omega^\star\right) & -\mu^\star \sin \left(2\pi \omega^\star\right) & 0 \\ \\
\mu^\star \sin \left(2\pi  \omega^\star\right)& \mu^\star \cos \left(2\pi  \omega^\star\right)& 0 \\ \\
0& 0 & \mu_3^{-1}\\
\end{array}\right) \left(\begin{array}{lll}
\overline{X} \\ \\ \\
\overline{Y}\\ \\ \\
\overline{Z}\\
\end{array}\right)   
\end{equation}

\subsection{Fourth perturbation}
\label{irrational}
Concerning the periodic orbit $P_N$ of Lemma \ref{new configuration2}, if necessary,  we may locally perturb $R_3$ in such a way that $\omega^\star \in \RR\backslash \mathbb{Q}$. Let us denote by ${R}_4$ the resulting perturbation.

\subsection{Fifth perturbation}
\label{fourth perturbation}
In this section, we will use the strategy used by \cite{KNS}, adapted to our purposes. 

\begin{figure}[ht]
\begin{center}
\includegraphics[height=8.0cm]{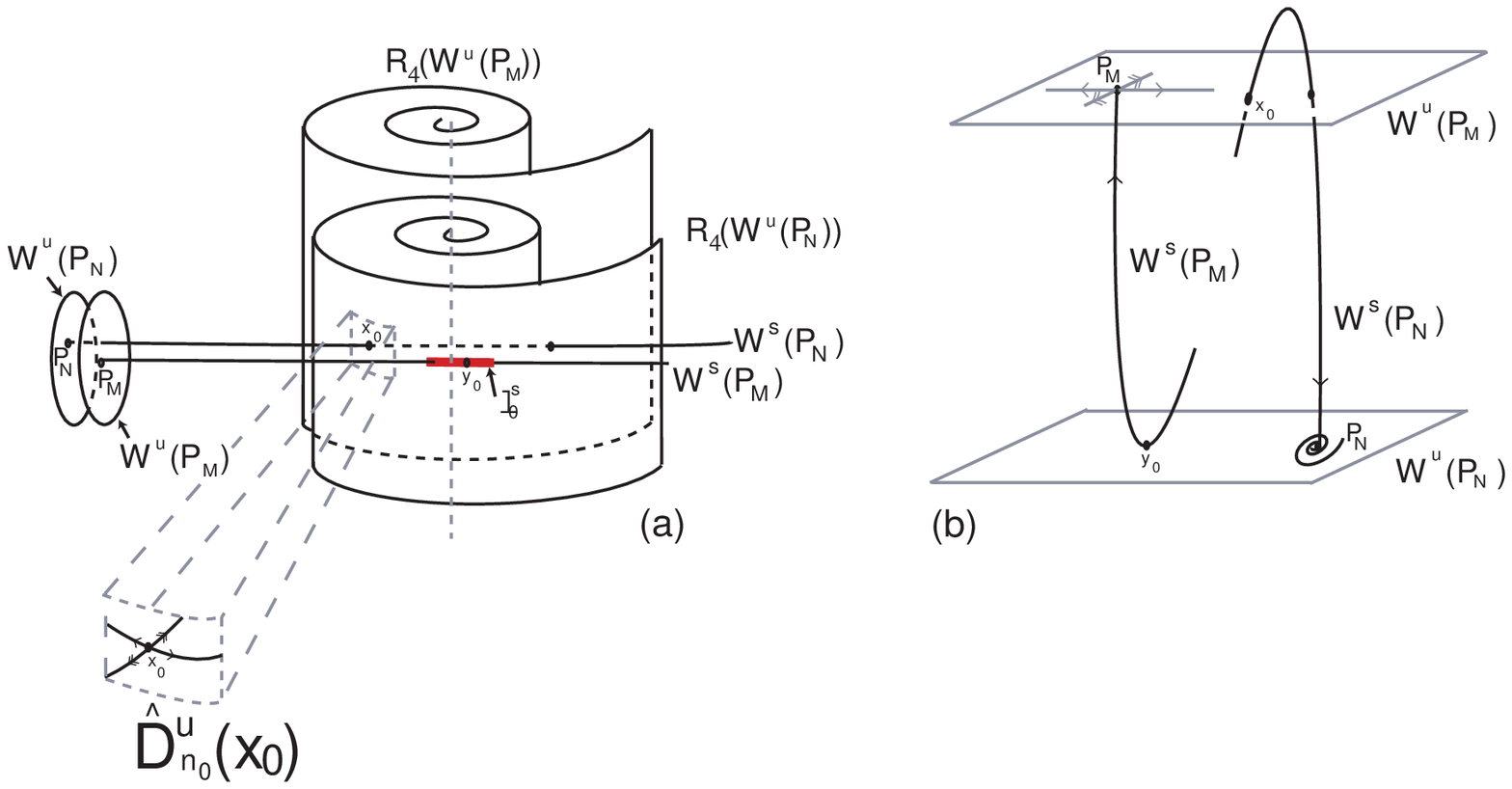}
\end{center}
\caption{\small Illustration of Lemma \ref{new configuration2}.}
\label{bif1apicture}
\end{figure}

\bigbreak
We start with the diffeomorphism $R_4$ given in Subsection \ref{irrational}. Let $x_0$ be a point in $R_4(W^u(P_M)) \cap W^s(P_N) \subset W^u(P_M) \cap W^s(P_N)$ (it exists by item (5) of Lemma \ref{new configuration2}). Suppose, without loss of generality that 
\begin{equation}
\label{Wuu(P_M)}
x_0 \notin W^{uu}(P_M).
\end{equation}
For each  $n\in \NN$, let us define the following sets (see Figure \ref{bif1apicture}):
\begin{itemize}
\medbreak
\item $D_0^u(x_0)$ is a small two-dimensional disk contained in $R_4(W^u(P_M))$ and containing $x_0$; 
\medbreak
\item  $x_n= {R}_4^n(x_0)$;
\medbreak
\item $\hat{D}_n^u=R_4^n(D_0^u(x_0)) \supset\{x_n\}$;
\medbreak
\item  $\ell^{uu}(x_0)$ is the segment contained in the leaf through $x_0$ of $\mathcal{F}^{uu}(P_M)$; 
\medbreak
\item   $\hat{\ell}_n^{uu}= R_4^n(\ell^{uu}(x_0))$ and
\medbreak
\item  $v_n^{uu}$ is the unitary  tangent vector to $\hat{\ell}_n^{uu}$ at $x_n$.

\end{itemize}

\bigbreak
By $\lambda$-Lemma \cite{PM}, it is straightforward that, for a large $n\in \NN$, there is a subset $D_1$ of $D_0^u(x_0)\subset W^u(P_M)$ containing $x_0$ such that  $R_3^n(D_1)\subset \hat{D}_n^u$ and converges to $W^u(P_N)$, in the $C^1$ topology, as $n \rightarrow +\infty$. 
Since $T \, W^u(P_N)$ is compact, there is $v_\infty \in T \, W^u(P_N)$ such that 
$
\lim_{n_i\in \NN} v_{n_i}^{uu} = v_\infty.
$
Denoting by $\| \,.\, \|$ the usual norm in the vector space $\RR^3$, and combining the two previous conclusions, we have proved that:
\begin{lemma}
\label{7.4}
For any $\varepsilon>0$, there is an integer $n_0 \in \NN$  and a sequence $(n_i)_i$ such that 
$$
\forall n_i>n_0, \quad |P_N - x_{n_i}|< \varepsilon \qquad \text{and} \qquad \|v_\infty- v_{n_i}^{uu}\|<\varepsilon.
$$
\end{lemma}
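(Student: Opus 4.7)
The plan is to combine two convergences, one for the base point and another for the tangent direction, both obtained from the $\lambda$-Lemma applied to the disk $D_0^u(x_0)$ which meets $W^s(P_N)$ transversely at $x_0$.

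First I would fix notation: let $\xi\ge 1$ denote the period of $P_N$ under $R_4$, and consider the subsequence $m_k=k\xi$, $k\in\NN$. Since $x_0\in W^s(P_N)$ (by the very definition of $x_0$ as a heteroclinic point in $R_4(W^u(P_M))\cap W^s(P_N)$), the iterates along this arithmetic progression satisfy $R_4^{m_k}(x_0)\to P_N$ as $k\to+\infty$. In particular, given $\varepsilon>0$ there exists $k_0$ such that $|P_N-x_{m_k}|<\varepsilon$ for every $k\ge k_0$, which gives the first estimate in the statement.

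Next I would invoke the $C^1$ Inclination Lemma ($\lambda$-Lemma, see \cite{PM}): since the two-dimensional disk $D_0^u(x_0)\subset R_4(W^u(P_M))$ is transverse to $W^s_{\loc}(P_N)$ at $x_0$ (here I use that $P_N$ is a saddle with $\dim W^u(P_N)=2$ and $\dim W^s(P_N)=1$, together with the heteroclinic condition (5) of Lemma~\ref{new configuration2}), the iterates $\hat{D}_{m_k}^u=R_4^{m_k}(D_0^u(x_0))$ accumulate on $W^u_{\loc}(P_N)$ in the $C^1$ topology on any small fixed neighbourhood of $P_N$. Consequently, for every $\eta>0$ there exists $k_1\ge k_0$ such that for all $k\ge k_1$ the tangent plane $T_{x_{m_k}}\hat{D}_{m_k}^u$ lies within $\eta$ of $T_{P_N}W^u(P_N)$ in the Grassmannian of $2$-planes of $\RR^3$.

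The only remaining point is the behaviour of the distinguished unit vector $v_{m_k}^{uu}\in T_{x_{m_k}}\hat{D}_{m_k}^u$ (tangent to the iterated strong-unstable leaf $\hat{\ell}_{m_k}^{uu}\subset\hat{D}_{m_k}^u$). By the previous step, the vectors $v_{m_k}^{uu}$ lie within $\eta$ of the $2$-plane $T_{P_N}W^u(P_N)$. Since the closed unit sphere inside that plane is compact, one can extract a subsequence $(m_{k_i})_i$ along which $v_{m_{k_i}}^{uu}$ converges to a unit vector $v_\infty\in T_{P_N}W^u(P_N)$. Setting $n_i:=m_{k_i}$ and taking $n_0$ large enough that both $|P_N-x_{n_i}|<\varepsilon$ and $\|v_\infty-v_{n_i}^{uu}\|<\varepsilon$ hold for $n_i>n_0$ concludes the argument.

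The only mildly delicate point is verifying that the hypothesis $x_0\notin W^{uu}(P_M)$ stated in \eqref{Wuu(P_M)} is indeed what prevents the candidate limit $v_\infty$ from degenerating or from lying in an unwanted direction (for instance, from being forced along a codimension-one subspace of $T_{P_N}W^u(P_N)$ that would ruin later arguments). This is where I expect most of the care is needed: the condition \eqref{Wuu(P_M)} ensures that the leaves $\hat{\ell}_n^{uu}$ are not swallowed by the strong unstable direction at $P_M$, so that after forward iteration they spread in a genuinely two-dimensional fashion inside the disks $\hat{D}_n^u$; this guarantees that the extracted $v_\infty$ is a bona fide unit vector and, in fact, that any direction inside $T_{P_N}W^u(P_N)$ can be reached by refining the choice of subsequence. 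The rest of the proof is a direct combination of the $\lambda$-Lemma with compactness.
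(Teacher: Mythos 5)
Your argument matches the paper's: both invoke the $\lambda$-Lemma to make the iterated disks $\hat{D}_n^u$ accumulate $C^1$-close on $W^u(P_N)$ and then use compactness of the unit tangent bundle of $W^u(P_N)$ to extract a convergent subsequence $v_{n_i}^{uu}\to v_\infty$; your explicit passage to iterates along multiples of the period $\xi$ of $P_N$ is a welcome clarification that the paper leaves implicit in the choice of the sequence $(n_i)$. One small correction to your closing remark: the hypothesis $x_0\notin W^{uu}(P_M)$ in \eqref{Wuu(P_M)} plays no role in Lemma~\ref{7.4} itself (the leaf $\ell^{uu}(x_0)$ and its unit tangent are well-defined regardless); it is imposed so that the tangency point produced later in Proposition~\ref{prop_final} lies off the strong stable/unstable manifold as required by Tatjer's condition \textbf{[T2]}, not to prevent degeneration of $v_\infty$.
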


Now, let us consider a quadratic tangency $y_0$ between $W^u (P_N)$ and $W^s(P_M)$  (it exists by item (4) of Lemma \ref{new configuration2}). For every integer $m\in \NN_0$, define:
\begin{itemize}
\medbreak
\item $y_{-m}= R_4^{-m}(y_0)$;
\medbreak
\item $\ell_{-m}^s$ is a small arc of $W^s(P_M)$ passing through $y_{-m}$ (it exists by definition);
\medbreak
\item $w^s_{-m}$ is the unitary  vector tangent to $\ell^s_{-m}$ at  $y_{-m}$.
\medbreak
\end{itemize}

Therefore:
\medbreak
\begin{lemma}
\label{7.5}

For any $\varepsilon>0$, there is an integer $m_0 \in \NN$ and a subsequence $(m_i)_{n_i}$ such that
$$
\forall m_i \geq m_0, \quad |P_N - y_{-m_i}|< \varepsilon \qquad \text{and} \qquad \|v_\infty- w_{-m_i}^{s}\|<\varepsilon.
$$
\end{lemma}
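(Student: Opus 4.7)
The plan is to mirror the backward-iteration argument of Lemma \ref{7.4}, replacing the $\lambda$-Lemma convergence by the direct observation that $y_0\in W^u(P_N)$, and then to exploit the irrationality of $\omega^\star$ (established in \S\ref{irrational}) to align the limiting tangent direction with the prescribed vector $v_\infty$. The compatibility of the two subsequences will then be obtained by a simultaneous choice, which explains the notation $(m_i)_{n_i}$ in the statement.

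First, since $y_0$ is a quadratic tangency between $W^u(P_N)$ and $W^s(P_M)$, we have $y_0\in W^u(P_N)$; consequently $y_{-m}=R_4^{-m}(y_0)$ remains on $W^u(P_N)$ for all $m\ge 0$, and $y_{-m}\to P_N$ as $m\to+\infty$, yielding the first estimate $|P_N-y_{-m}|<\varepsilon$ for all $m$ sufficiently large. Moreover, the defining property of a quadratic tangency forces $w^s_0\in T_{y_0}W^u(P_N)$, so by invariance of $W^u(P_N)$ the vector $w^s_{-m}$ is a unit vector in $T_{y_{-m}}W^u(P_N)$ for every $m\ge 0$. As $y_{-m}\to P_N$, the tangent plane $T_{y_{-m}}W^u(P_N)$ tends to $E^u_{P_N}$ in the Grassmannian, so up to an error that vanishes with $m$ we may analyze the evolution of $w^s_{-m}$ inside the single two-dimensional plane $E^u_{P_N}$.

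In the linearising chart \eqref{new coordinates} produced by Lemma \ref{new configuration2} and \S\ref{irrational}, the restriction of $DR_4$ to $E^u_{P_N}$ is the rotation by $2\pi\omega^\star$ scaled by $\mu^\star>1$; hence $DR_4^{-m}|_{E^u_{P_N}}$ acts as a rotation by $-2\pi m\omega^\star$ scaled by $(\mu^\star)^{-m}$. The direction of $w^s_{-m}$ in $E^u_{P_N}$ therefore differs from that of $w^s_0$ by a rotation of angle approximately $-2\pi m\omega^\star$, modulo a correction that goes to zero with $\textnormal{dist}(y_{-m},P_N)$. Because $\omega^\star\in\RR\setminus\QQ$, Weyl's equidistribution theorem implies that the set $\{-2\pi m\omega^\star\pmod{2\pi}:m\in\NN\}$ is dense in $[0,2\pi)$. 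Identifying $v_\infty\in T_{P_N}W^u(P_N)=E^u_{P_N}$ (a legitimate identification, since Lemma \ref{7.4} already placed $v_\infty$ in $TW^u(P_N)$ and the base point of the limit is $P_N$) with an angle in this circle, we may extract a sequence $(m_i)_i$ along which the rotated direction is $\varepsilon/2$-close to $v_\infty$ and simultaneously $|P_N-y_{-m_i}|<\varepsilon/2$.

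Finally, to guarantee that this subsequence can be chosen compatibly with the sequence $(n_i)$ of Lemma \ref{7.4}, we use a diagonal extraction: for each fixed $n_i$, the density argument above produces infinitely many admissible indices $m$, so we may select $m_i$ depending on $n_i$ with both estimates in force. The main obstacle is the careful bookkeeping in the second step: one needs uniform control on how the tangent-plane drift $T_{y_{-m}}W^u(P_N)\to E^u_{P_N}$ and the nonlinear corrections to the linearisation affect the angle of $w^s_{-m}$, so that Weyl's equidistribution yields an honest approximation of $v_\infty$ and not merely of its projection onto a moving plane; this is handled by absorbing both errors into an $o(1)$ term and invoking density after the convergence $y_{-m}\to P_N$ is already in place.
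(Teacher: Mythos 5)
Your proof is correct and follows essentially the same strategy the paper uses, only spelling out more of the details that the paper leaves implicit. The paper's proof is a three-line argument: $|\mu^\star|>1$ gives $y_{-m}\to P_N$ on $W^u(P_N)$, the tangent vectors $w^s_{-m}$ stay nondegenerate, and irrationality of $\omega^\star$ gives a subsequence along which the (rotated) unit tangent directions approach $v_\infty$. You fill in the real content: that the tangency forces $w^s_0\in T_{y_0}W^u(P_N)$ and hence $w^s_{-m}\in T_{y_{-m}}W^u(P_N)$ by invariance, that this plane converges to $E^u_{P_N}$, that in the linearising chart \eqref{new coordinates} the normalized direction rotates by $-2\pi m\omega^\star$, and that irrational rotation is dense so a suitable subsequence can be extracted, simultaneously with $y_{-m}\to P_N$. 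Your diagonal extraction is a reasonable reading of the paper's notation $(m_i)_{n_i}$. Two cosmetic remarks: invoking Weyl equidistribution is overkill here, since density of $\{m\omega^\star \bmod 1\}$ needs only Kronecker's theorem for irrational $\omega^\star$; and since $w^s_{-m}$ is defined only up to sign, you implicitly use that sign freedom when claiming the rotated direction can approach $v_\infty$ rather than $-v_\infty$, which is worth making explicit but is not an error.
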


\begin{proof}

 Since $\left|  \mu^\star \right | > 1$ (see formula \eqref{new coordinates}), ${y_{-m}}$ converges to $P_N$ as $m \rightarrow +\infty$ and the sequence $(\|w_{-m_i}^{s}\|)_i$ does not vanish.
Since  $\omega^\star\in \RR\backslash \mathbb{Q}$ (see 4th perturbation on \S \ref{irrational}), there exists a subsequence $(m_i)_{n_i}$ such that ${w_{-m_i}^s }$ converges to $v_\infty$ in $\RR^3$. 

\end{proof}

Using triangular inequality, combining Lemmas \ref{7.4} and \ref{7.5}, for any $\varepsilon>0$, there exist $n_0, m_0 \in \NN$ large enough, such that:
\begin{equation}
\label{triangular}
|x_{n_0} - y_{-m_0}|< 2\varepsilon \qquad \text{and} \qquad \|v_{n_0}^{uu}- w_{-m_0}^{s}\| \leq  \|v_\infty- v_{n_0}^{uu}\| + \|v_\infty- w_{-m_0}^{s}\|<2\varepsilon.
\end{equation}
\bigbreak
These two inequalities  are the key for the last perturbation. 
\medbreak

\begin{proposition}
\label{prop_final}
For  $\varepsilon>0$ small, there exists a  $C^r$-diffeomorphism ${R}_{5}$, $C^1$-$2\varepsilon$-close to $R_4$, such that:
\medbreak
\begin{enumerate}
\item  $R_5$ coincidies with $R_4$ outside a  small neighbourhood of $y_0$ (tangency between $W^u (P_N,R_4)$ and $W^s(P_M, R_4)$) ;
\medbreak
\item the hyperbolic continuation of $P_M$ has a homoclinic tangency satisfying conditions \textbf{[T2]--[T4]}.
\end{enumerate}
\medbreak
\end{proposition}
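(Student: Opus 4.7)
The plan is to convert the closeness data in \eqref{triangular} into a genuine homoclinic tangency of (the continuation of) $P_M$ by means of a $C^r$-bump perturbation of $R_4$ supported in a small neighbourhood $U$ of $y_0$; condition $(1)$ will be built into the construction and condition $(2)$ is the substantive content. The decisive preliminary step is to exploit the two-dimensional freedom inside $W^u(P_M,R_4)$: by the $\lambda$-Lemma applied to $x_0\in W^u(P_M,R_4)\pitchfork W^s(P_N,R_4)$, $W^u(P_M,R_4)$ is $C^1$-close to $W^u(P_N)$ near $P_N$. Accordingly, I would select a point $x'\in W^u(P_M,R_4)$ with $|x'-x_{n_0}|=O(\varepsilon)$ whose component in the unstable eigenspace $E^u_{P_N}$ of $DR_4$ at $P_N$ coincides with that of $y_{-m_0}$. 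The residual displacement $x'-y_{-m_0}$ then lies in the one-dimensional contracting eigenspace $E^s_{P_N}$, and the tangent $v'$ to the strong unstable leaf of $W^u(P_M,R_4)$ through $x'$ satisfies $\|v'-v_{n_0}^{uu}\|=O(\varepsilon)$ and, by \eqref{triangular}, $\|v'-w_{-m_0}^s\|=O(\varepsilon)$.

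Next, choose $U\ni y_0$ small enough to be disjoint both from $\{y_{-m_0},\dots,y_{-1}\}$ and from an open neighbourhood of $\{x_0,\dots,x_{n_0},x'\}$. For any $R_5=R_4+\eta$ with $\mathrm{supp}(\eta)\subset U$, one then has $W^u(P_M,R_5)=W^u(P_M,R_4)$ locally near $x'$ and $R_5^{-m_0}=R_4^{-m_0}$ on a neighbourhood of $y_0$. Introduce a finite-dimensional $C^r$-family $\eta_{(a,b)}$, $(a,b)\in\RR^3\times\RR^2$ near the origin, whose effect is to translate the local branch of $W^s(P_M,R_5)$ through $y_0$ by $a$ and to rotate its tangent line by $b$. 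Pulling this branch back $m_0$ times by the unchanged map $R_4^{-m_0}$ yields a $C^1$-smooth map $\Phi:(a,b)\mapsto(\mathrm{position},\mathrm{tangent})$ near $(y_{-m_0},w_{-m_0}^s)$, whose differential at the origin factors through $DR_4^{-m_0}|_{y_0}$ and its induced projective action, and is therefore invertible. The inverse function theorem yields a unique small $(a^*,b^*)$ with $\Phi(a^*,b^*)=(x',v')$, producing $R_5:=R_4+\eta_{(a^*,b^*)}$.

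Verification of \textbf{[T2]}--\textbf{[T4]}: by construction $W^s(P_M,R_5)$ meets $W^u(P_M,R_5)$ at $x'$ with tangent $v'\in T_{x'}\hat\ell_{n_0}^{uu}\subset T_{x'}W^u(P_M,R_5)$, giving the tangency together with \textbf{[T3]} (the strong unstable leaf of $P_M$ here playing the role of the $\mathcal F^{ss}$-leaf in Definition \ref{Tatjer_condition}, read under the time reversal $R_0\leftrightarrow R_0^{-1}$ adopted in this section). The condition $x'\notin W^{ss}(P_M,R_5)$ in \textbf{[T2]} transfers the hypothesis \eqref{Wuu(P_M)}, $x_0\notin W^{uu}(P_M,R_4)$, along the $R_4$-orbit of $x_0$ and is preserved under $C^1$-small perturbation. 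Quadraticity of the tangency in \textbf{[T2]} and the center-(un)stable transversality of \textbf{[T4]} are open-and-dense conditions within the same bump family, achievable by a final infinitesimal adjustment of $(a^*,b^*)$.

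The main obstacle is the quantitative control of $\|(a^*,b^*)\|$: a priori the pull-back operator $DR_4^{-m_0}|_{y_0}$ has exponentially large norm in some directions, so without the preparatory sliding the translation parameter $a^*=DR_4^{m_0}|_{y_{-m_0}}(x'-y_{-m_0})$ could blow up with $m_0$. Forcing $x'-y_{-m_0}\in E^s_{P_N}$, the direction contracted by $DR_4$ along the orbit near $P_N$, is precisely what makes $a^*$ \emph{exponentially smaller in $m_0$} than $\varepsilon$; the rotation parameter $b^*$ is $O(\varepsilon)$ because the projective action of $DR_4^{-m_0}$ on $E^u_{P_N}$ is an isometry (the two unstable eigenvalues of $DR_4$ at $P_N$ are complex of equal modulus). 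Fixing $U$ with diameter independent of $\varepsilon$ then gives $\|\eta_{(a^*,b^*)}\|_{C^1}=O(\varepsilon)$, so $R_5$ is $C^1$-$2\varepsilon$-close to $R_4$, which establishes $(1)$. This is the three-dimensional analogue of the matching principle in \cite[\S 3]{KNS}, to which the remaining technicalities reduce.
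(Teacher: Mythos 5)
Your proof is correct and follows essentially the same strategy as the paper's: a "shift $+$ rotate" perturbation of $R_4$ supported near $y_0$, whose propagated effect on $\ell^s_{-m_0}$ near $P_N$ is controlled by the proximity estimates of Lemmas \ref{7.4}--\ref{7.5}, so that after the perturbation $W^s(P_M)$ meets $\hat D^u_{n_0}\subset W^u(P_M)$ quadratically, tangent to a leaf of $\mathcal{F}^{uu}(P_M)$.

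Where you genuinely improve on the paper's write-up is in the quantitative control of the pull-back. The paper simply says the arc $\ell^s_{-m_0}$ is "shifted down along the stable axis" and leaves the smallness of the resulting perturbation at $y_0$ implicit; but a naive shift $x_{n_0}-y_{-m_0}$ has a component in $E^u_{P_N}$, and pushing that forward by $DR_4^{m_0}$ to $y_0$ would blow up with $m_0$. Your preparatory slide — replacing $x_{n_0}$ by $x'\in W^u(P_M,R_4)$ so that the residual displacement $x'-y_{-m_0}$ lies entirely in $E^s_{P_N}$ — is precisely what makes the translation parameter $a^*$ exponentially small under $DR_4^{m_0}$, while the fact that the unstable eigenvalues at $P_N$ are a complex pair of equal modulus (by construction from the third/fourth perturbations) makes the projective pull-back on $E^u_{P_N}$ an isometry, keeping the rotation parameter $b^*=O(\varepsilon)$. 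The inverse-function-theorem packaging of the two paper steps into a single bump family $\eta_{(a,b)}$ is clean and legitimate, and your handling of the support (disjoint from $\{y_{-m_0},\dots,y_{-1}\}$ and from the forward orbit of $x_0$) correctly ensures $R_5^{-m_0}=R_4^{-m_0}$ on the pulled-back branch. The closing remark that quadraticity in \textbf{[T2]} and the transversality in \textbf{[T4]} are open-dense within the bump family is terse but adequate — it matches the paper's level of detail on this point. In short, same route, but you make explicit an exponential-estimate step that the paper's proof (and Figure \ref{shifting down}) only gestures at.
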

\begin{proof}
Based on inequalities \eqref{triangular}, the perturbation will be performed in two steps.
\medbreak

\textbf{First part:}
 the hyperbolic continuation $\ell^s_{-m_0} ({R}_5)$ is obtained from $\ell^s_{-m_0} ( R_4)$ by a shifting down operation along the stable axis tangent to the stable  manifold of $P_M$, as depicted in Figure \ref{shifting down}. Therefore the manifolds $W^u(P_M, R_5)$ and $W^s(P_M, R_5)$ have a quadratic tangency $z_0 \in \hat{D}^u_{n_0}$.  
 \medbreak
Let $\tilde{\ell}^{uu}_{n_0}$ be a curve in $\hat{D}^u_{n_0}$ passing through $z_0$ and such that $R_4^{-n_0} (\tilde{\ell}^{uu})$ is contained in one of leaves of $\mathcal{F}^{uu}(P_N)$. Generically, the space $T_{z_0} \ell^s_{-m_0}(\tilde{R}_4) $ does not coincide with $T_{z_0} \tilde{\ell}^{uu}$ but the second expression of \eqref{triangular} means that they are sufficiently close to each other.

\medbreak
\textbf{Second part:}
Perform a second perturbation to $R_4$, on a small neighbourhood of $y_0$, where the set 
$ T_{z_0} \ell^s_{-m_0}(R_5)$ is obtained from $T_{z_0} \ell^s_{-m_0}(R_4)$ by a small rotation around the stable axis meeting $\hat{D}^u_{n_0}$ orthogonally at $z_0$.  
 It is easy to see that we obtained a $C^r$ diffeomorphism $R_5$ such that $T_{z_0} \ell^s_{-m_0}(R_5)= T_{z_0} \ell^{uu}_{n_0}(R_5).$

 \begin{figure}[ht]
\begin{center}
\includegraphics[height=4.9cm]{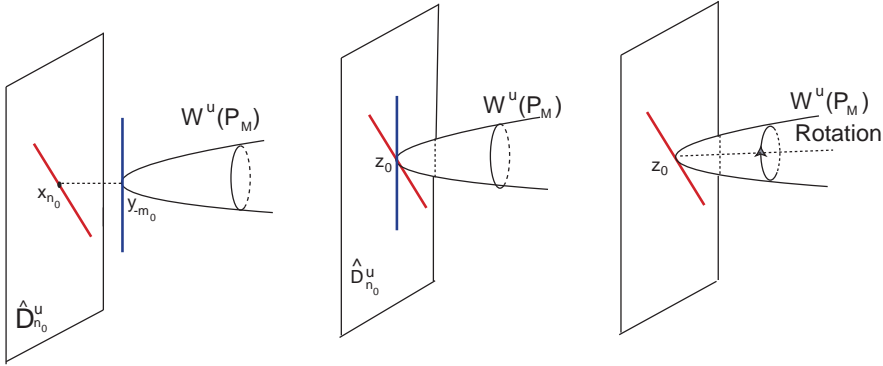}
\end{center}
\caption{\small Technique to obtain a homoclinic tangency of codimension two. Scheme of the shifting down (first part) and rotation (second part) performed by \cite{KNS}. }
\label{shifting down}
\end{figure}

Therefore, we have obtained the $C^r$-diffeomorphism  ${R}_5$, which is $C^1$-close to $R_4$, and such that:
\begin{itemize}

\item $r$ may be large enough;
\item $W^s(P_M)$ and $\hat{D}^u_{n_{0}}\subset W^u(P_M)$ have a quadratic tangency at $z_0$;
\item  the point $z_0$ does not belong to $W^{uu}(P_M)$  (see \eqref{Wuu(P_M)});
\item at the point $z_0$, the manifold $W^s(P_M, R_5 )$ is tangent to the leaf $\tilde{\ell}^{uu}_{n_0}(z_0)$ of $\mathcal{F}^{uu}(P_M)$ at $z_0$ and
\item the center stable bundle of $P_M$ and $W^u(P_M)$ are transverse at $z_0$.
\end{itemize}

This means that the hyperbolic continuation of $P_M$ has a homoclinic tangency satisfying conditions \textbf{[T2]--[T4]}, for $R_5$.
\end{proof}

\begin{remark}
\label{dissipativo}
Taking into account \eqref{hyp_autovalores}, the eigenvalues of $DR_5$ at $P_M$ are close to $\mu_1, \mu_2, \mu_3$ which satisfy $$|\mu_1|<|\mu_2| \lesssim 1<|\mu_3|$$ and
\begin{equation}
\label{dissipativo}
|\mu_1 \mu_3|\approx 0 <1, \qquad |\mu_2 \mu_3|\approx | \mu_3|>1 \qquad \text{and}\qquad |\mu_1\mu_2 \mu_3| \approx {\delta(X^2+Y^2)^{\delta-1}}  <1.
\end{equation}
This means that the periodic point $P_M$ is dissipative but not sectionally dissipative. 
\end{remark}

\begin{remark}
\label{R4in}
Using \eqref{dissipativo}, Proposition \ref{prop_final} allows us to conclude that the diffeomorphism $R_5^{-1}$ is $C^1$-close to $R_0$ and that $P_M$ is a periodic point satisfying \textbf{[T1]--[T4]}.
\end{remark}

\begin{remark}
\label{R4in}
By construction, although the diffeomorphism $R_5^{-1}$ is (just) $C^1$-close to $R_0$, it may be of class $C^r$, $r\geq 5$.
\end{remark}
\bigbreak
\subsection{Tatjer's conditions satisfied}
We present the next result which is important in the sequel: the existence of quasi-periodic behaviour and homoclinic tangencies associated to sectionally dissipative points. 
\begin{lemma}[Broer \emph{et al} \cite{Broer96}, Gonchenko \emph{et al}  \cite{GGT2007}, Tatjer \cite{Tatjer}, adapted]
\label{Tatjer_result}
Let $R$ be a $C^r$ $(r\geq 5)$ diffeomorphism on a three-dimensional smooth manifold which has a homoclinic tangency
to a dissipative periodic point $P$ whose map $DR(P)$ has real eigenvalues $\mu_1, \mu_2, \mu_3$ satisfying $|\mu_1|<|\mu_2|<1<|\mu_3|$, $|\mu_1\mu_3|<1$ and $|\mu_2\mu_3|>1$. In addition, suppose that the homoclinic tangency satisfies the Tatjer
conditions \textbf{[T1]--[T4]}. Then, there is a two-parameter family of diffeomorphisms $G_{(a,b)}$ with $G_{(0,0)}\equiv R$ such that:
\medbreak
\begin{enumerate}
\item for $n$ large enough, there are values of the parameter $(a_n, b_n)$, converging to $(0,0)$, for which the diffeomorphism $G_{(a_n,b_n)}$ undergoes a generic n-periodic Bogdanov-Takens bifurcation. 

\medbreak
\item there is a sequence $(a_n, b_n)_n$ of the parameter values converging to $(0, 0)$ such that, for any sufficiently large $n\in \NN$, $G_{(a_n,b_n)}$ has an $n$-periodic smooth attracting invariant circle;
\medbreak
\item there is a set $E$ of parameter values such that its intersection with any neighbourhood of $(0,0)$ (in the parameter space) has positive Lebesgue measure, and for $(a,b)\in E$ the diffeomorphism $G_{(a,b)}$ has a strange attractor near the orbit of tangency;
\medbreak
\item there are open sets $U \subset \RR^2$ arbitrarily close to $(0,0)$ such that for a generic $(a,b)\in U$, the diffeomorphism $G_{(a,b)}$ has infinitely many sinks.
\medbreak
\item arbitrarily $C^1$-close to any element  $G_{(a,b)}$, there there is a diffeomorphism $\tilde{G}$, not necessarily in the family $G_{(a,b)}$, exhibiting a generic homoclinic quadratic tangency to a sectionally dissipative periodic orbit. 
\end{enumerate}
\end{lemma}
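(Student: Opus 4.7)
My approach is to reduce the five assertions to Tatjer's classical renormalization analysis of a codimension-two tangency of type [T1]--[T4]. The plan has three stages: set up the local/global coordinates, run the rescaling to identify a Bogdanov--Takens limiting family, and then harvest items (1)--(5) from known theorems applied to this limit.

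Stage 1 (setup). Using the $C^1$ linearizing chart preceding Definition \ref{Tatjer_condition}, write the local map near $P$ in diagonal form with eigenvalues $\mu_1,\mu_2,\mu_3$, and pick $n_0$ so that $\overline{x}_0=R^{-n_0}(x_0)$ lies in this chart. The global transition $R^{n_0}$ from a neighbourhood of $\overline{x}_0$ to one of $x_0$ is a $C^r$ diffeomorphism whose relevant Taylor coefficients are pinned down by the four Tatjer conditions: [T2] gives a quadratic contact in the $z$-direction; [T3] kills the linear term coming from the strong-stable foliation; [T4] encodes transversality of the center-unstable bundle to $W^s(P)$; and [T1], together with $|\mu_1\mu_3|<1<|\mu_2\mu_3|$, fixes the balance of eigenvalue products needed for the subsequent rescaling to be consistent.

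Stage 2 (main obstacle: renormalization). For each large $n$, form the first-return map $T_n=R^{n_0}\circ R^n$ around the tangency orbit and embed $R$ into a two-parameter family $G_{(a,b)}$ in which $a$ measures the splitting between $W^u(P)$ and $W^s(P)$ near $x_0$ and $b$ measures the deviation from condition [T3]. Introduce an affine rescaling of the coordinates and of $(a,b)$ whose scale factors depend on $n$ and on the eigenvalues $\mu_j$. The spectrum inequalities are exactly what makes the rescaled family converge, in the $C^r$ topology on compact sets, to the time-one map of a generic planar Bogdanov--Takens flow. This is the hard part of the argument and is essentially the technical core of \cite{Tatjer,GGT2007}: one must control the cross-terms produced by the high iterate of the linear map and verify that the quadratic nondegeneracy survives the limit, so that the limiting BT normal form is nondegenerate. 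Conditions [T3] and [T4] are used in an essential way precisely to guarantee the nondegeneracy of the BT unfolding.

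Stage 3 (consequences). Item (1) is immediate: the BT unfolding in the rescaled chart produces, after undoing the rescaling, parameters $(a_n,b_n)\to(0,0)$ for which $G_{(a_n,b_n)}$ undergoes a generic $n$-periodic Bogdanov--Takens bifurcation. Item (2) follows from the Andronov--Hopf curve inside the BT unfolding, which yields a smooth attracting invariant circle of period $n$. For item (3) I would invoke \cite{Broer96}: destruction of the Hopf invariant circle in a generic BT unfolding produces H\'enon-like strange attractors on a positive Lebesgue measure set $E$ of parameters accumulating at $(0,0)$. Item (4) applies Newhouse's mechanism \cite{OS92}, whose hypotheses are met because $P$ is dissipative and the tangency is generic and quadratic: this gives open sets $U$ arbitrarily close to $(0,0)$ in which $G_{(a,b)}$ has infinitely many sinks for a Baire-generic subset. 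Finally, for item (5) I would follow \cite{GGT2007}: inside a strange attractor of item (3), select a hyperbolic periodic orbit whose spectrum is sectionally dissipative (such orbits are abundant because the attractor is supported in the center manifold of the BT point, where one direction is strongly contracting), then perform a $C^1$-small perturbation to create a generic quadratic homoclinic tangency to it. This yields the required $\tilde G$ and finishes the proof.
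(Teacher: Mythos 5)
The paper does not actually prove this lemma: it is stated as an adapted import from the cited references, and the three remarks that follow it merely point to the precise ingredients in the literature (Proposition~4.1 of \cite{Tatjer} for the limiting return maps $\tilde{F}_{(\tilde{a},\tilde{b})}$ of \eqref{henon}, Theorem~4 and Figure~7 of \cite{GGT2007} for the Horozov--Takens curve of invariant circles, and formulas (3.11), (3.14) and Remark~2 of \cite{GGT2007} for why the dissipativeness $|\mu_1\mu_2\mu_3|<1$ is essential for convergence of the rescaled coefficients). Your sketch reconstructs exactly that renormalization strategy --- embed $R$ in a two-parameter unfolding with $a$ the splitting parameter and $b$ the [T3]-unfolding parameter, form the first-return map $R^{n_0}\circ R^n$ around the tangency orbit, and rescale to a Bogdanov--Takens / conservative H\'enon limit, then read off items (1)--(5) --- so at the level of structure you are consistent with what the paper alludes to.

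A few citation-level inaccuracies in Stage~3, though these are not logical gaps: you attribute item~(3) to \cite{Broer96}, but that reference concerns invariant circles in the BT unfolding (your item~(2)); the positive-Lebesgue-measure set $E$ of parameters with H\'enon-like strange attractors is one of Tatjer's own theorems \cite{Tatjer}, resting ultimately on Mora--Viana \cite{MV93} rather than on \cite{Broer96}. For item~(4) you write ``Newhouse's mechanism \cite{OS92}'', but \cite{OS92} is Ovsyannikov--Shilnikov; the infinitely-many-sinks conclusion in this three-dimensional, not-sectionally-dissipative setting is part of Tatjer's and Gonchenko \emph{et al}'s analysis \cite{GGT2007}. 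You also omit the point stressed in the paper's last remark after the lemma: the inequality $|\mu_1\mu_2\mu_3|<1$ from~[T1] is not just a consistency hypothesis for the rescaling but is indispensable for the limits $\lim_k R_k$ defining $\tilde{a},\tilde{b}$ to exist at all; worth flagging explicitly in Stage~2.
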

\medbreak
\begin{remark}
The role played by the saddle-node bifurcations in the two-dimensional scenario (see \emph{e.g.} \cite{LR2015, YA}) will be played by the  Bogdanov-Takens bifurcation in the three-dimensional case \cite{Broer96}. For such bifurcation of periodic points, the corresponding spectrum has one unipotent eigenvalue (double eigenvalue equal to 1 with associated eigenspace of dimension 1). The attracting invariant circles are generated by the Horozov-Takens bifurcation for three-dimensional diffeomorphisms, corresponding to the points $B_n^{- -}$ in the proof of Theorem 4 of \cite{GGT2007}. See also Figure 7 of \cite{GGT2007}. 

\end{remark}

\begin{remark}
\label{attracting curves remark}
Proposition 4.1 of \cite{Tatjer} shows that, under hypotheses of Lemma \ref{Tatjer_result}, there exists a family of return maps $\tilde{F}_{(\tilde{a} ,\tilde{b})\in \RR^2}$ associated to the generalized homoclinic tangency, which may be written as:
\begin{equation}
\label{henon}
\tilde{F}_{(a,b)}(\tilde{x}, \tilde{y}, \tilde{z})=\left(\tilde{z}, \tilde{b}\tilde{z}, \tilde{a}+\tilde{y}+\tilde{z}^2 \right).
\end{equation}
The limit return map near the Bogdanov-Takens bifurcation is the conservative H\'enon map.

\end{remark}

\begin{remark}
The condition about the dissipativeness  $|\mu_1\mu_2\mu_3|<1$ of the period orbit is essential to compute the convergence of the coefficients $\tilde{a}, \tilde{b}$ in the limit map \eqref{henon} and cannot be relaxed. See Formulas (3.11), (3.14) and Remark 2 of \cite{GGT2007}. According to \cite{GGT2007},  if $\lim_{k \rightarrow +\infty} (\mu_1 \mu_2 \mu_3)^k \neq 0$, then:
$$
\lim_{k \in \NN} R_k= \lim_{k \in \NN}\left(\frac{2J_k}{M_2}+ O((\mu_1 \mu_2 \mu_3)^k)\right)
$$
 could be undefined. The precise definitions of $J_1$, $R_k$, $J_k$ and $M_2$ are described in expressions (2.10), (3.11), (3.14) and (4.1) of \cite{GGT2007}.
\end{remark}

\section{Proof of the main results}
\label{final_proof}
Using the previous sections, it is easy to check that the map $R_5^{-1}$ satisfies Lemma \ref{Tatjer_result} for the dissipative periodic point $P_M$. Therefore, the map $R_5^{-1}$ may be seen as the organizing center by which we can obtain strange attractors and non-trivial contracting wandering domains. In Lemma \ref{Tatjer_result}, the parameter $a$ is responsible for splitting the manifolds $W^s(P_M)$ and $W^u(P_M)$ and $b$ is the parameter unfolding the degeneracy related to condition \textbf{[T3]}.

\subsection{Strange attractors: sixth perturbation (A)}
\label{8.1}
By Lemma \ref{Tatjer_result},  there exists a diffeomorphism $G_A$ which is $C^1$-close to $R_5^{-1} \equiv G_{(0,0)}$ exhibiting a homoclinic quadratic tangency to a sectionally dissipative fixed point. Using \cite{Leal} and \cite{Viana93} revisited in Lemma \ref{Tatjer_result} and observing that $G_A$ is $C^1$-close to $R_0$, the statement of Theorem \ref{main_thB} follows.

\begin{figure}[ht]
\begin{center}
\includegraphics[height=6.0cm]{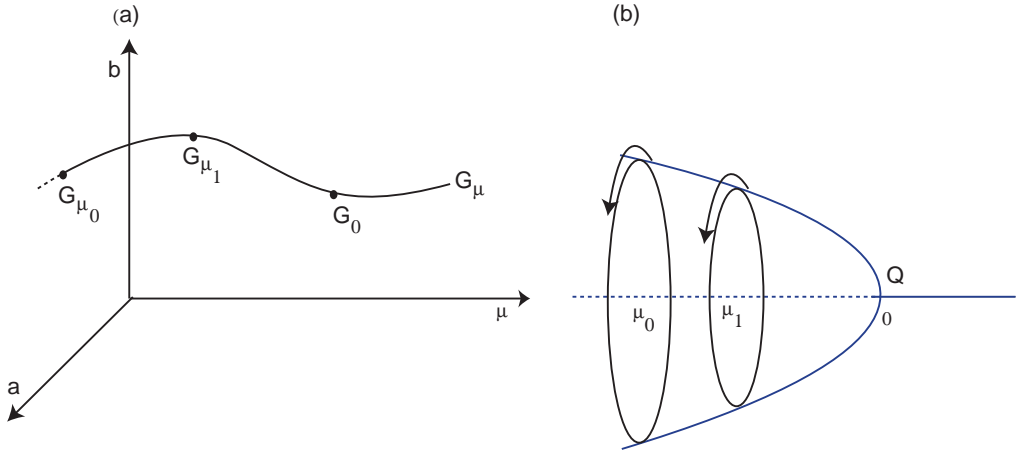}
\end{center}
\caption{\small Neimark-Sacker-Hopf bifurcation.  For $\mu=0$, the family $(G_{(a^\star,b^\star)},\mu)$ undergoes the generic Hopf bifurcation at a $n$-periodic point $Q$ for some integer $n \geq 1$.}
\label{hopf1}
\end{figure}

\subsection{Non-trivial wandering domains: sixth perturbation (B)}
\label{8.2}
 The last step of the proof of Theorem \ref{main_thA} follows the ideas of \cite{KNS}. For the sake of completeness, we revive the main steps of proof.
 \bigbreak
By Lemma \ref{Tatjer_result}(1), we know that $R_5^{-1} \equiv G_{(0,0)}$ is in the closure of the family of diffeomorphisms exhibiting differentiable attracting invariant curves. Let $(a^\star, b^\star)$ be a point in the parameter space such that $G_{(a^\star,b^\star)}$ exhibits a differentiable attracting invariant curve (see Theorem 4 of \cite{GGT2007}). 
 The authors of \cite{Broer96} proved that there exists a one-parameter family $\left(G_{(a^\star,b^\star)},\mu\right)_{\mu\in[-\epsilon, \epsilon]}$, $\epsilon >0$  small enough, of diffeomorphisms such that (Figure \ref{hopf1}):
 \begin{itemize}
 \item $\left(G_{(a^\star,b^\star)},\mu\right)$ is arbitrarily $C^5$-close to $(G_{(a^\star,b^\star)},0)$;
 \medbreak
 \item there exists $\mu_0 \in \, \, ]\,0, \epsilon\,[$ such that $\left(G_{(a^\star,b^\star)},{\mu_0}\right)\equiv G_{(a^\star,b^\star)}$;
 \medbreak
 \item for $\mu=0$, the family $(G_{(a^\star,b^\star)},\mu)$ undergoes the generic Hopf bifurcation at a $n$-periodic point, say  $Q=(0,0,0)$, for some integer $n \geq 1$.
 \end{itemize}
 
 \bigbreak
The Hopf bifurcation at $\mu=\mu_0$ creates the attracting invariant circle given in Lemma \ref{Tatjer_result}(2). In cylindrical coordinates $(r,\theta, t)$, in a small neighbourhood of $Q $, under generic conditions, we may write :
\begin{equation}
\label{normal form}
 \left(G^n_{(a^\star,b^\star)},\mu\right) (r, \theta \, mod{(2\pi)}, t)=\left((1+\mu)r-a_\mu r^3+O_\mu(r^4),\,\, \theta +\beta_\mu+O_\mu(r^2), \,\,\gamma t\right)
\end{equation}
where:
\medbreak
\begin{itemize}
\item $O_\mu (r^2)$ and $O_\mu (r^4)$ are smooth functions of order $r^2$ and $r^4$ near $(r, \mu) = (0, 0)$;
\medbreak
\item  $O_\mu (r^2)$ and $O_\mu (r^4)$ depend smoothly on $\mu$;
\medbreak
\item $a_\mu$, $\beta_\mu$ are real functions depending smoothly on $\mu$  with $a_0 > 0$ and
\medbreak
\item $\gamma\in \RR$ such that $0 < |\gamma |<1$. 
\end{itemize}
\bigbreak

Now we perform two perturbations to an element of the family $\left(G_{(a^\star,b^\star)},\mu\right)_{\mu\in[0, \mu_0]}$, say at $\mu=\mu_1$.

\textbf{First perturbation:}
We locally perturb $\left(G_{(a^\star,b^\star)},\mu_1\right)$ in such a way that the map in \eqref{normal form} satisfies the following conditions:
\begin{itemize}
\medbreak
\item $\frac{\beta_\mu}{ 2\pi }\in \RR\backslash \mathbb{Q}$ and
\medbreak
\item $O_{\mu_1} (r^2)=O_{\mu_1} (r^4)=0$.
\end{itemize}
\medbreak
 Let $\tilde{G}_1$ be the resulting diffeomorphism; it is clear that that there is an attracting invariant circle $\tilde{\mathcal{S}_1}$ and the restriction of $\tilde{G^n_1}$ to $\tilde{\mathcal{S}_1}$ is an irrational rotation. The radius of $\tilde{\mathcal{S}_1}$ is precisely $\sqrt{\frac{\mu_1}{a_{\mu_1}}}>0$. In particular, we have:
 \begin{equation}
 \label{wand1}
 \forall i \in \{0,..., n-1\}\qquad  \tilde{G}_1^i(\tilde{\mathcal{S}_1})\cap \tilde{\mathcal{S}_1}=\emptyset.
 \end{equation}
\bigbreak

\textbf{Second perturbation:} Following Denjoy's construction, let us construct a $C^1$ diffeomorphism $\tilde{G}_2$ arbitrarily $C^1$-close to $\tilde{G}_1$ (associated to a sequence $(I_i)_{i\geq 0}$ of open arcs, satisfying the properties described in \S \ref{Denjoy revision}, which is contained in a new circle $\tilde{\mathcal{S}_2}$ sufficiently $C^1$-close to $\tilde{\mathcal{S}_1}$) satisfying the following conditions:
\medbreak
\begin{itemize}
\item $\tilde{\mathcal{S}_2}$ is an attracting invariant circle for $\tilde{G}_2^n$;
\medbreak
\item for any $i,j>0$ with $i\neq j$:
\begin{equation}
\label{(4.3)}
\tilde{G}_2^n(I_i) = I_{i+1} \qquad \text{and} \qquad  I_i \cap I_j = \emptyset.
\end{equation}
\medbreak
\item $\omega(I_0, \tilde{G}_2^n)$ is a transitive Cantor set on $\tilde{\mathcal{S}_2}$ without periodic points.
\end{itemize}

\begin{figure}
\begin{center}
\includegraphics[height=6cm]{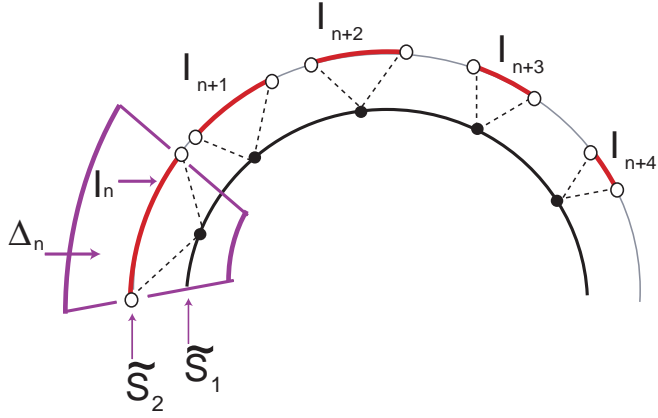}
\end{center}
\caption{\small Denjoy construction. For $i \in \{1,2\}$, the set  $\tilde{\mathcal{S}_i}$ is the attracting invariant and smooth curve for $\tilde{G^n_i}$. }
\label{denjoy}
\end{figure}
\bigbreak
\textbf{The contracting wandering domain:}
As illustrated in Figure \ref{denjoy}, consider a normal tubular neighbourhood of each arc $I_n$ which is defined as $
D_n =\bigcup_{x\in I_n} \Delta_n(x)
$
where $\Delta_n(x)$ is the open disk of radius $\delta_x>0$ centered at $x\in I_n$ lying in a plane normal to $I_n$ for each $n\geq 0$.

\begin{lemma}
The set $D_0$ is a contracting wandering domain for the diffeomorphism $\tilde{G}_2$.
\end{lemma}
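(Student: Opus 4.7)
The plan is to verify the three defining properties of a contracting wandering domain for $\tilde G_2$ acting on $D_0$: (i) pairwise disjointness of the forward orbit $\{\tilde G_2^k(D_0)\}_{k\ge 0}$; (ii) contraction of diameters, $\mathrm{diam}(\tilde G_2^k(D_0))\to 0$; (iii) the set $\omega(D_0,\tilde G_2)$ is not a single periodic orbit. Throughout, I shall exploit the fact that $Q$ has minimal period $n$, so $\tilde G_2$ cyclically permutes the $n$ pairwise disjoint smooth circles $\mathcal C_r:=\tilde G_2^r(\tilde{\mathcal S}_2)$, $r=0,\dots,n-1$, each attracting transversely with some uniform contraction factor $\rho\in(0,1)$ inherited from the multipliers $\gamma$ and the derivative of the normal-form map along $\tilde{\mathcal S}_2$.

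For (i), I would first shrink the radii $\delta_x$ defining $D_0=\bigcup_{x\in I_0}\Delta_0(x)$ so that $D_0$ lies in a tubular neighbourhood of $\mathcal C_0$ of thickness less than $\frac{1}{2}\min_{r\ne r'}\mathrm{dist}(\mathcal C_r,\mathcal C_{r'})$. Writing any index $k\ge 0$ as $k=qn+r$ with $0\le r<n$, the identity $\tilde G_2^{qn+r}(I_0)=\tilde G_2^r(I_q)$ combined with (\ref{(4.3)}) shows that $\tilde G_2^k(D_0)$ is a thin tubular neighbourhood of the arc $\tilde G_2^r(I_q)\subset\mathcal C_r$. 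Thus for $k\ne k'$, write $k=qn+r$, $k'=q'n+r'$: if $r\ne r'$ the two iterates lie in disjoint tubular neighbourhoods of different circles $\mathcal C_r,\mathcal C_{r'}$; if $r=r'$ then $q\ne q'$, and by Denjoy's construction $I_q\cap I_{q'}=\emptyset$, so taking $\delta_x$ smaller than half the distance between the arcs $\tilde G_2^r(I_q)$ and $\tilde G_2^r(I_{q'})$ (which can be arranged uniformly in $r$ since there are only finitely many residues) gives disjointness again.

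For (ii), the diameter of $\tilde G_2^k(D_0)$ splits into a longitudinal component, bounded by the arc length $\ell(\tilde G_2^r(I_q))\le C\,\ell(I_q)$, and a transverse component, bounded by the initial radius $\max\delta_x$ multiplied by the $q$-fold transverse contraction of $\tilde G_2^n$ along $\mathcal C_0$, hence by $C\,\rho^{q}\max\delta_x$. The Denjoy construction requires $\sum_{q\ge 0}\ell(I_q)<\infty$, so $\ell(I_q)\to 0$; combined with $\rho^q\to 0$, both contributions vanish as $k\to\infty$, giving the contracting property. For (iii), observe that $\omega(D_0,\tilde G_2)\supset\omega(I_0,\tilde G_2)=\bigcup_{r=0}^{n-1}\tilde G_2^r\bigl(\omega(I_0,\tilde G_2^n)\bigr)$, and by Denjoy each $\tilde G_2^r(\omega(I_0,\tilde G_2^n))$ is a transitive Cantor set in $\mathcal C_r$ without periodic points. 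Thus $\omega(D_0,\tilde G_2)$ contains a nonhyperbolic Cantor set and is certainly not reduced to a single periodic orbit.

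The main obstacle is the bookkeeping in step (i): the orbit of $D_0$ hops among the $n$ circles in a non-monotone way, so one must simultaneously control the transverse distance between the $\mathcal C_r$'s and the Denjoy separation between the arcs $I_q$ on a single circle, then choose $\max\delta_x$ small enough to dominate both scales. Once this is done, the attracting behaviour encoded in $|\gamma|<1$ and in the normal form (\ref{normal form}) makes the contraction step (ii) routine, and (iii) is an immediate consequence of the classical Denjoy picture lifted to the $n$ permuted circles.
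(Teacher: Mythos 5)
Your overall strategy — split indices into residue classes mod $n$, use the disjointness of the $n$ circles $\mathcal C_r$ for iterates on different residues, use the Denjoy disjointness $I_q\cap I_{q'}=\emptyset$ for iterates on the same residue, and separate the diameter estimate into a longitudinal part governed by $\ell(I_q)\to 0$ and a transverse part governed by $\rho^q\to 0$ — is the same route the paper takes, and your treatment of (ii) and (iii) is if anything more explicit than the paper's (the paper only mentions the contraction in the first and third normal-form coordinates, leaving the longitudinal shrinking $\ell(I_q)\to 0$ implicit).

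However, the concrete mechanism you propose in step (i) for handling same-residue iterates does not quite close. You want to take $\delta_x$ smaller than half the distance between $\tilde G_2^r(I_q)$ and $\tilde G_2^r(I_{q'})$ "uniformly in $r$", but uniformity in $r$ is the easy part; the obstruction is uniformity in $q,q'$. In a Denjoy construction the arcs $I_q$ are pairwise disjoint, yet $\inf_{q\neq q'}\mathrm{dist}(I_q,I_{q'})=0$ (indeed the arcs accumulate on the whole invariant Cantor set, in particular on the endpoints of $I_0$), so no single choice of $\max\delta_x$ can lie below all arc-to-arc gaps. Noting that the transverse radius of $\tilde G_2^{qn}(D_0)$ also shrinks like $\rho^q$ does not resolve this by itself: to beat a gap $\mathrm{dist}(I_q,I_{q'})$ that is small, you would need $\rho^{\min(q,q')}$ to be even smaller, which is a quantitative compatibility condition between the Denjoy gaps and the transverse contraction that you have not imposed. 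The paper avoids this pairwise-distance estimate entirely by building the tubes and the Denjoy perturbation together so that the forward-invariance $\tilde G_2^n(D_i)\subset D_{i+1}$ holds, with the $D_i$ pairwise disjoint by construction; disjointness of the $\tilde G_2^{n}$-orbit of $D_0$ then follows at once, without ever comparing $\delta_x$ to arc gaps. You should replace the pairwise-distance argument with this nesting argument (or, equivalently, impose the compatibility between $\sum\ell(I_q)$, the gap sizes, and $\rho$ as part of the construction), after which your residue-class bookkeeping and your use of $\tilde G_2^i(\tilde{\mathcal S}_2)\cap\tilde{\mathcal S}_2=\emptyset$ for $i=1,\dots,n-1$ complete the argument exactly as in the paper.
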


\begin{proof}
Taking into account the way we constructed the two perturbations $\tilde{G}_1$ and $\tilde{G}_2$, it follows that 
$$\forall i,j \in \NN_0, \quad \tilde{G}_2^n(D_i) \subset D_{i+1} \qquad \text{and} \qquad D_i \cap D_j =\emptyset.$$
Since $\omega(I_0, \tilde{G}_2^n)$ is a transitive Cantor set, then $D_0$ is a wandering domain for $\tilde{G}_2^n$. The set $D_0$ is a contracting domain for $\tilde{G}_2^n$ because the first and third components of $\tilde{G}_2^n$ are contracting (see \eqref{normal form}).  Finally, taking into account \eqref{wand1} and \eqref{(4.3)}, we conclude that  $\tilde{G_2}^i(\mathcal{S}_2)\cap \mathcal{S}_{2} = \emptyset$ for $i=1, ..., n-1$, and therefore we get that $D_0$ is a contracting wandering domain for the diffeomorphism $\tilde{G}_2$.
\end{proof}

The map which satisfies Theorem \ref{main_thA} is $G_B:=\tilde{G}_2$ which, by construction, is $C^1$-close to $R_0$.

\section*{Acknowledgements} 
The author thanks the helpful and valuable comments from Pablo Barrientos, Artem Raibekas, Shin Kiriki and Teruhiko Soma about the main results of this paper.  The author was partially supported by CMUP (UID/MAT/00144/2013), which is funded by FCT with national (MEC) and European structural funds through the programs FEDER, under the partnership agreement PT2020. Alexandre Rodrigues also acknowledges financial support from Program INVESTIGADOR FCT (IF/00107/2015). Part of this work has been written during AR's stay in Nizhny Novgorod University, supported by the grant RNF 14-41-00044.


\newpage
\appendix
\section{Itinerary of the proof}
\label{app1}

\begin{table}[ht]
\begin{center}
\begin{tabular}{lclllcc}
Description & Notation& Property & Main Strategy & Section   \\

 & &  & of / based in &    \\ \hline
&&&& \\
Original map & $R_0$ & Infinitely many horseshoes &Fowler and Sparrow \cite{FS} &    \S \ref{suspended horseshoes}\\
& & accumulating on $\Gamma$ &  Ib\'a\~nez \emph{et al}  \cite{IbRo}  & \S \ref{hyperbolicity}    \\
& &  & Shilnikov \cite{Shilnikov67A, Shilnikov70}   &     \\
&&&& \\ \hline 
&&&& \\
$1st$ Perturbation & $R_1$    & Heteroclinic tangency & Ovsyannikov and &  \S \ref{first perturbation} \\ 
(within the family) &     & (orbits with the same signature) &   Shilnikov \cite{OS92}&    \\ &&&& \\ \hline
&&&& \\
$2nd$ Perturbation & $R_{2}$    & One orbit does not have& Technical & \S \ref{SS:TH} \\ 
 &     & dominated splitting  & Hypothesis  &    \\ 
  &     &  into 1-D sub-bundles  &   &    \\ &&&& \\\hline
  &&&& \\
 $3rd$ Perturbation & $R_{3}$    & Heteroclinic tangency & Franks \cite{Franks} & \S \ref{second perturbation} \\ 
 &     & (orbits with the different signatures) &  &    \\ &&&& \\\hline
 &&&& \\
$4th$ Perturbation 
& $R_4$  & Periodic point with complex  angle  &  &     \S \ref{irrational}\\ 
(if necessary)  &   & (in the Euler notation)  & &   \\ &&&& \\ \hline

 &&&& \\
 
$5th$ Perturbation & $R_{5}$    & Homoclinic tangency to a saddle  & Kiriki \emph{et al} \cite{KNS} &\S \ref{fourth perturbation}   \\
 &   & satisfying Tatjer conditions  &  &  \\ &&&& \\\hline

&&&& \\
$6th$ Perturbation  &$G_A$  & (H\'enon-like) strange attractors    & Leal \cite{Leal} and  &  \S \ref{8.1}\\ 
 Theorem A&  &    & Viana \cite{Viana93}  & \\ &&&&  \\\hline
&&&& \\
$6th$ Perturbation & $G_B$ & Contracting non-trivial     & Broer \cite{Broer96}, & \S \ref{8.2}\\
Theorem B&  & Wandering domains    & Denjoy \cite{Denjoy} & \\
&  &    &  Kiriki \emph{et al} \cite{KNS} & \\ 
&&&& \\ \hline
\end{tabular}
\end{center}
\bigbreak
\caption{Structure of the paper and itinerary of the proof of Theorems A and B.} 
\label{summary}
\end{table}


\begin{thebibliography}{999}

\bibitem{ALR} M.A.D.~Aguiar, I.S.~Labouriau, A.A.P.~Rodrigues, \emph{Switching near a heteroclinic network of rotating nodes}, Dyn. Sys. Int. J. 25(1), 75--95, 2010

\bibitem{Barrientos} P. Barrientos, S. Ib\'a\~nez, J. Rodr\'iguez, \emph{Heteroclinic cycles arising in the unfolding of nilpotent singularities}, J. Dyn. Diff. Eqs, 23, 999--1028, 2011

\bibitem{BIR2016} P. Barrientos, S. Ib\'a\~nez, J. Rodr\'iguez, \emph{Robust cycles unfolding from conservative bifocal homoclinic orbits}, Dynamical Systems, 31(4), 546--579, 2016

\bibitem{Bohl} P. Bohl, \emph{Uber die hinsichtlich der unabhngigen variabeln periodische Differentialgleichung erster Ordnung}, Acta Math. 40,321--336, 1916

\bibitem{Bonatti94} C. Bonatti, J. Gambaudo, J. Lion, C. Tresser, \emph{Wandering domains for infinitely renormalizable diffeomorphisms of the disk}, Proc. Am. Math. Soc. 122, 1273--1278, 1994


\bibitem{Broer96} H. Broer, R. Roussarie, C. Sim\'o, \emph{Invariant circles in the Bogdanov-Takens bifurcation for diffeomorphisms},  Ergod. Th. \& Dynam. Sys. 16, 1147--1172, 1996

\bibitem{CV2001} E. Colli, E. Vargas, \emph{Non-trivial wandering domains and homoclinic bifurcations}, Ergod. Theor. Dynam. Syst. 21 1657--1681, 2001

\bibitem{MvS} W de Melo, S. van Strien, \emph{One-Dimensional Dynamics}, Ergebnisse der Mathematik und ihrer Grenzgebiete (3), 25, Springer-Verlag, Berlin, 1993

\bibitem{Denjoy} A. Denjoy, \emph{Sur les courbes d\'efinies par les \'equations diff\'erentielles a la surface du tore}, J.
Math. Pures Appl. 11, 333--375, 1932

\bibitem{FS} A.C. Fowler, C. T. Sparrow,\emph{ Bifocal homoclinic orbits in four dimensions}. Nonlinearity 4,  1159--1182, 1991

\bibitem{Franks} J. Franks, \emph{Necessary conditions for stability of diffeomorphisms}, Trans. Amer. Math. Soc., 158, 301--308, 1971

 
\bibitem{Glendinning}  P. Glendinning,  \emph{Differential equations with bifocal homoclinic orbits}, {\it Internat. J. Bifur. Chaos.} { 7}(1), 27--37, 1997


\bibitem{GL} P. Glendinning, C. Laing, \emph{A homoclinic hierarchy}, {\it Physics Letters A.}, { 211}, 155--160, 1996


\bibitem{GT} P. Glendinning, C. Tresser, \emph{Heteroclinic loops leading to hyperchaos}, Journal de Physique Lettres 46 (8), 347--352, 1985


\bibitem{GST} S.V. Gonchenko, L.P. Shilnikov, D.V. Turaev, \emph{Dynamical phenomena in systems with structurally unstable Poincar\'e homoclinic orbits}, Chaos 6, 15--31, 1996


\bibitem{GGT2007} S. Gonchenko, V. Gonchenko, J. Tatjer, \emph{Bifurcations of three-dimensional diffeomorphisms with non-simple quadratic homoclinic tangencies and generalized H\'enon maps}, Regul. Chaotic Dyn. 12 (3), 233--266, 2007 


\bibitem{Hart} J. Harterich, \emph{Cascades of reversible homoclinic orbits to a bi-focus equilibrium}, Physica D, 112, 187--200, 1998



\bibitem{Homburg_book} A.J.~Homburg, \emph{Global Aspects of Homoclinic Bifurcations in Vector Fields}, Memoirs of the American Mathematical Society 121(578), American Mathematical Society, 1996


\bibitem{Homburg2002} A.J. Homburg, \emph{Periodic attractors, strange attractors and hyperbolic dynamics near homoclinic orbits to saddle-focus equilibria}, Nonlinearity 15, 1029--1050, 2002


\bibitem{IbRo} S. Ib\'a\~nez, A.A.P. Rodrigues, \emph{On the dynamics near a homoclinic network to a bifocus:
switching and horseshoes}, Int. J. Bifurcation Chaos 25, 1530030, [19 pages], 2015


\bibitem{KH95}  A.~Katok, B.~Hasselblatt, \emph{Introduction to the Modern Theory of Dynamical Systems}, Cambridge University Press, 1995 


\bibitem{KS} S. Kiriki, T. Soma, \emph{Takens' last problem and existence of non-trivial wandering domains}, Adv. Math. 306, 524--588, 2017

\bibitem{KNS} S. Kiriki, Y. Nakano, T. Soma, \emph{Non-trivial wandering domains for heterodimensional cycles},  Nonlinearity, 30, 3255--3270, 2017

\bibitem{KL2009} O. Koltsova, L. Lerman, \emph{Hamiltonian dynamics near nontransverse homoclinic orbit to saddle-focus equilibrium}, Discrete Cont. Dyn. Sys., 25, 3, 883--913, 2009

\bibitem{Koon} W.S. Koon, M. Lo, J. Marsden, S. Ross, \emph{Heteroclinic connections between periodic orbits and resonance transition in celestial mechanics}, Control and DynamicalSystems Seminar, California Institute of Technology, Pasadena, California, 1999

\bibitem{Knill} R. Knill,  \emph{A $C^\infty$  flow on $S^3$ with a Denjoy minimal set}, J. Differ. Geom. 16, 271--280, 1981

\bibitem{Kwakkel} F. Kwakkel, V. Markovic, \emph{Topological entropy and diffeomorphisms of surfaces with wandering domains}, Ann. Acad. Sci. Fenn. 35, 503--513, 2010







\bibitem{LR2015} I.S.~Labouriau, A.A.P.~Rodrigues, \emph{Global bifurcations close to symmetry}, J. Math. Anal. Appl. 444 (1), 648--671, 2016

\bibitem{LR2016} I.S.~Labouriau, A.A.P.~Rodrigues, \emph{On Takens' Last Problem: tangencies and time averages near heteroclinic networks}, Nonlinearity,  30, 1876--1910, 2017
\bibitem{LG} C. Laing,  P. Glendinning, \emph{Bifocal homoclinic bifurcations}, {\it Phys. D} { 102},  1--14, 1997

\bibitem{Leal} B. Leal, \emph{High dimension diffeomorphisms exhibiting infinitely many strange attractors}, Ann. I. H. Poincar\'e --  AN 25, 587--607, 2008

\bibitem{Lerman2000} L. Lerman, \emph{Dynamical phenomena near a saddle-focus homoclinic connection in a hamiltonian system}, J. Stat. Phys. 101, 357--372, 2000

\bibitem{Lyubich} M. Yu. Lyubich, \emph{Nonexistence of wandering intervals and structure of topological attractors of one-dimensional dynamical systems I, The case of negative Schwarzian derivative}, Ergod. Th. Dynam. Sys., 9, 737--749, 1989

\bibitem{Martens} M. Martens, W. de Melo, S. van Strien, \emph{Julia-Fatou-Sullivan theory for real one-dimensional dynamics}, Acta Math. 168, 273--318, 1992

\bibitem{MV93} L. Mora, M. Viana, \emph{Abundance of strange attractors}, Acta Math. 171(1) 1--71, 1993

\bibitem{Norton} A. Norton, D. Sullivan, \emph{Wandering domains and invariant conformal structures for mappings
of the 2-torus}, Ann. Acad. Sci. Fenn. Ser. A 21, 51--68, 1996


\bibitem{OS92} I. Ovsyannikov, L. P. Shil'nikov, \emph{Systems with a homoclinic curve of multidimensional saddle-focus type, and spiral chaos}, Mathematics of the USSR-Sbornik 73(2),  415--443, 1992

\bibitem{PM} J. Palis, W. Melo, \emph{Geometric theory of dynamical systems. An introduction}, Cambridge University Press, 1993

\bibitem{PR1983} C. Pugh, C. Robinson, \emph{The $C^1$ Closing Lemma, including hamiltonians}, Ergodic Theory Dynam. Systems 3(2), 261--313, 1983

\bibitem{PR2001} A. Pumari\~no, J. A. Rodr\'iguez, \emph{Coexistence and persistence of infinitely many strange attractors}, Ergodic
Theory Dynam. Systems 21, 5, 1511--1523, 2001
 

\bibitem{Rodrigues_AIMS} A.A.P.~Rodrigues, \emph{Infinitely many multi-pulses near a bifocal cycle}, AIMS - Conference Publications (special): 954--964, 2015 

\bibitem{Rodrigues2_2013} A.A.P.~Rodrigues, \emph{Repelling dynamics near a Bykov cycle}, {J. Dyn. Diff. Equations}, {25}(3), 605--625, 2013
 
\bibitem{RLA} A.A.P.~Rodrigues, I.S.~Labouriau, M.A.D.~Aguiar, \emph{Chaotic double cycling}, Dyn. Sys. Int. J. {26}(2), 199--233, 2011


\bibitem{Sandstede} B. Sandstede, \emph{Center manifolds for homoclinic solutions}, {\it J. Dyn. Diff. Equations} { 12}, 449--510, 2000

\bibitem{Shilnikov65} L. P. Shilnikov, \emph{A case of the existence of a denumerable set of periodic motions},
 {\it Sov. Math. Dokl} {\bf 6}, 163--166, 1965
 

\bibitem{Shilnikov67A} L. P. Shilnikov, \emph{The existence of a denumerable set of periodic motions in four dimensional space in an extended neighbourhood of a saddle-focus}, {\it Sov. Math. Dokl} { 8}(1),  54--58, 1967


\bibitem{Shilnikov70}
  L. P. Shilnikov, \emph{A contribution to the problem of the structure of an extended neighborhood of a rough equilibrium state of saddle-focus type,},  Math. USSR Sb., 10(1), 91--102, 1970

\bibitem{Shilnikov et al} L. Shilnikov, A. Shilnikov, D. Turaev, L. Chua, \emph{Methods of Qualitative Theory in Nonlinear Dynamics 1}, World Scientific Publishing Co., 1998



\bibitem{Tatjer} J. C. Tatjer, \emph{Three-dimensional dissipative diffeomorphisms with homoclinic tangencies}, Ergod. Th. Dynam. Sys., 21, 249--302, 2001

\bibitem{vS} S. van Strien, \emph{One-dimensional dynamics in the new millennium}, Discrete Conti. Dynam. Sys., 27, 2,  557--588, 2010

\bibitem{Viana93} M. Viana, \emph{Strange attractors in higher dimensions}, Bol. Soc. Bras. Mat. 24, 13--62, 1993

\bibitem{Viana97} M. Viana, \emph{Multidimensional nonhyperbolic attractors}, Inst. Hautes \'Etudes Sci. Publ. Math. 85, 63--96, 1997

\bibitem{Wiggins}S.~Wiggins, \emph{Global Bifurcations and Chaos}, Springer-Verlag New York Inc., 1988



\bibitem{YA}J.A.~Yorke, K.T.~Alligood, \emph{Cascades of period-doubling bifurcations: A prerequisite for horseshoes}, Bull. Am. Math. Soc. (N.S.) 9(3), 319--322, 1983
\end{thebibliography}
\end{document}